\documentclass{amsart}

\usepackage{amsmath,amsthm,amssymb,amsfonts,enumerate,color}
\usepackage{mathrsfs}
\usepackage{amstext,amsxtra}
\usepackage[margin=1.2in]{geometry}
\usepackage{graphicx}
\usepackage{float}

\usepackage{tikz}
\usetikzlibrary{matrix,arrows.meta,calc,intersections,fit}
\usetikzlibrary{decorations.markings}
\usepackage{tikz-cd}
\usepackage{textcomp}

\usepackage[colorlinks,urlcolor=black,linkcolor=blue,citecolor=blue,hypertexnames=false]{hyperref}
\allowdisplaybreaks

{\theoremstyle{plain}
\newtheorem{theorem}{Theorem}[section]
\newtheorem{proposition}[theorem]{Proposition}
\newtheorem{corollary}[theorem]{Corollary}
\newtheorem{lemma}[theorem]{Lemma}}
{\theoremstyle{definition}
\newtheorem{example}[theorem]{Example}
\newtheorem{definition}[theorem]{Definition}
}
{\theoremstyle{remark}
\newtheorem{remark}[theorem]{Remark}}

\newcommand{\mult}{\operatorname{mult}}
\newcommand{\aut}{\operatorname{Aut}}

\newcommand{\vdim}{\operatorname{vdim}}

\newcommand{\di}{\operatorname{div}}
\newcommand{\area}{\operatorname{Area}}
\newcommand{\floor}{\operatorname{floor}}

\newcommand{\rl}{\mathcal{R}}

\newcommand{\cl}{\mathcal{C}}
\newcommand{\pl}{\mathcal{P}}
\newcommand{\ml}{\mathcal{M}}
\newcommand{\ql}{\mathcal{Q}}
\newcommand{\xk}{\mathfrak{X}}
\newcommand{\ol}{\mathcal{O}}

\newcommand{\dl}{\mathcal{D}}

\newcommand{\cf}{\mathbf{c}}
\newcommand{\df}{\mathbf{d}}
\newcommand{\lf}{\mathbf{l}}
\newcommand{\rf}{\mathbf{r}}
\newcommand{\sff}{\mathbf{s}}
\newcommand{\fdf}{\mathbf{FD}}
\newcommand{\af}{\mathbf{a}}
\newcommand{\bbf}{\mathbf{b}}
\newcommand{\Sff}{\mathbf{S}}

\newcommand{\rb}{\mathbb{R}}

\newcommand{\cb}{\mathbb{C}}

\newcommand{\zb}{\mathbb{Z}}
\newcommand{\pb}{\mathbb{P}}

\newcommand{\qb}{\mathbb{Q}}
\newcommand{\eb}{\mathbb{E}}

\newcommand{\wt}{\widetilde}
\newcommand{\undl}{\underline}

\title[Universal polynomials for Gromov-Witten invariants]{Universal polynomials for Gromov-Witten invariants on toric surfaces}

\author{Yanqiao Ding}
\author{Dadi Ni}

\address{School of Mathematics and Statistics, Zhengzhou University, Zhengzhou, 450001, China}

\email{yqding@zzu.edu.cn}

\address{School of Mathematics and Statistics, Henan University, Kaifeng, 475004, China}

\email{nidd@henu.edu.cn}

\subjclass[2020]{Primary 14N35; Secondary 14T20}

\keywords{Floor diagrams, logarithmic Gromov-Witten invariants, node polynomials.}

\date{\today}

\begin{document}

\begin{abstract}
We study the polynomiality of combined logarithmic and ordinary 
Gromov-Witten invariants with a Lambda class insertion on toric surfaces $\Delta(\cf,\df)$.
We show that the combined logarithmic Gromov-Witten invariants can be 
expressed in terms of universal relative node polynomials when the direction vector $\cf$ 
and length vector $\df$ of the $h$-transverse lattice
polygon $\Delta(\cf,\df)$ are sufficiently large and spread out.
In particular, when the normal fan of the $h$-transverse lattice polygon $\Delta(\cf,\df)$ is smooth and its edges satisfy a length condition,
we show that the combined Gromov-Witten invariants are equal to a universal node polynomial. 
We also show that the higher genus Block-G\"ottsche polynomials of smooth toric surfaces 
corresponding to these $h$-transverse lattice polygons, including toric blowups beyond Hirzebruch and toric del Pezzo surfaces, 
are equal to a generating function of higher genus Gromov-Witten invariants.
\end{abstract}

\maketitle
\tableofcontents

\section{Introduction}
Gromov--Witten invariants are virtual counts of curves in algebraic varieties.
By using the virtual fundamental classes, we often study these counts together
with descendant and Lambda class insertions \cite{beh1997,BF-1997,li-tian-1998,fp-2000}. On toric
surfaces, Mikhalkin's correspondence theorem relates complex curve counts and Welschinger invariants 
to weighted counts of tropical curves \cite{mikhalkin05,wel2005a}. 
Refined enumerative geometry builds a bridge between the complex curve counts and their real analogues. 
G\"ottsche--Shende introduced refined curve counting on complex surfaces 
using invariants of relative Hilbert schemes \cite{gs14}. 
Block--G\"ottsche defined tropical refined invariants by 
replacing tropical vertex multiplicities in Mikhalkin's correspondence theorem 
by quantum integers \cite{bg2016}.
The resulting Laurent polynomials contain both complex and real enumerative information.
Their invariance under variation of the generic point conditions was proved by Itenberg--Mikhalkin \cite{im13}. 
Motivic integration provides a further approach to their geometric interpretation
\cite{nps18}. 
These developments make it possible to study the algebraic structure of the Gromov--Witten invariants
via the structure of tropical refined counts.

Two geometric interpretations of the tropical refined invariants are particularly relevant to our study. 
Mikhalkin used the quantum index to give a
refinement of real rational curve counts with prescribed toric boundary conditions \cite{mik17}.
Blomme extended this approach to more general boundary configurations \cite{blomme24-jag}. Bousseau
established a different correspondence: after the change of variables $q=e^{iu}$, refined tropical
counts of a toric surface determine generating series of higher genus logarithmic Gromov--Witten
invariants with a Lambda class insertion \cite{bou2019}. 
Bousseau also used higher genus logarithmic invariants to
construct quantum mirrors of log Calabi--Yau surfaces \cite{bousseau-quantum-mirrors}.
Refined floor diagram formulas for higher genus relative Gromov--Witten invariants of the projective plane and
Hirzebruch surfaces were established in \cite{bou2021}. 
Kennedy-Hunt--Shafi--Urundolil Kumaran extended the tropical refined correspondence to descendant
insertions in \cite{ksu24}. The surface BPS polynomial constructions of
Arg\"uz--Bousseau and the higher genus relative floor diagram
correspondence explore the generalization of tropical refined counts to non-toric surfaces \cite{arguz-bousseau-2025,dh2026}.

Floor diagrams provide an effective way to compute enumerative invariants. 
Floor diagrams were introduced by Brugall\'e--Mikhalkin through the floor decomposition of tropical curves in \cite{bm2007,bm2008}. By using the template decomposition of floor diagrams,
Fomin--Mikhalkin \cite{FM-2010} proved the polynomiality of plane Severi degrees $N^{d,\delta}$ at fixed cogenus $\delta$,
proving the conjecture proposed by Di Francesco and Itzykson in \cite{dfi-95}. 
Ardila--Block extended the polynomiality of Severi degrees to toric surfaces
corresponding to $h$-transverse polygons $\Delta(\cf,\df)$ under additional 
hypotheses \cite{AB-2013}. 
Block obtained the relative node polynomials for plane curves in \cite{Block-2012}.
The above polynomialities of Severi degrees were generalized to the refined cases in \cite{bg2016}.
By applying the combinatorial tool of long edge graphs and their logarithmic generating functions in
\cite{BCK-2014,liu2016,liu-osserman2018}, G\"ottsche--Kikwai \cite{gk16} proved that
the refined Severi degrees of $\pb^2$ and rational ruled surfaces 
are multiplicative in the topological invariants of the surfaces and the line bundle.
When allowing different data to vary, one can obtain other polynomiality results for enumerative invariants.
Ardila--Brugall\'e proved piecewise polynomiality in the contact orders of
double Gromov--Witten invariants of Hirzebruch surfaces \cite{ardila-brugalle2017}.
Hahn--Reda generalized this piecewise polynomiality to the curve 
counts in toric surfaces corresponding to $h$-transverse polygons \cite{hr-2024}.
Corey--Markwig--Ranganathan obtained polynomiality and piecewise polynomiality results for rational
logarithmic Gromov--Witten invariants on $\pb^1\times\pb^1$ 
with specified boundary tangency conditions \cite{corey-markwig-ranganathan2025}.
When the genus is fixed, Brugall\'e--Jaramillo Puentes proved polynomiality of coefficients of small codegree
of the tropical refined invariants \cite{brugalle-jaramillo2022}.
M\'evel obtained the universal formulas for these coefficients of the genus zero tropical refined invariants \cite{mevel2026}.
Blomme--M\'evel computed the associated asymptotic refined series in genera zero and one \cite{blomme-mevel2025}. 
For ruled surfaces over an elliptic curve, Blomme used floor diagrams to prove piecewise polynomiality of
relative invariants and quasimodularity of generating series \cite{blomme23-compos}. 
When comparing these polynomiality results, it is important to note the difference between 
fixed cogenus, fixed genus and fixed coefficient codegree.

Logarithmic Gromov--Witten theory provides a framework for 
studying curve counting problems on algebraic varieties with boundary divisors. 
Following relative stable maps and degeneration methods in
\cite{lijun-2001}, the theory of stable logarithmic maps was constructed by Gross--Siebert, Chen and
Abramovich--Chen \cite{gs-2013,Chen-2014,AbraChen-2014}. 
Nishinou--Siebert used toric degenerations
to establish algebraic--tropical correspondences in \cite{ns-2006}.
The decomposition formula of
Abramovich--Chen--Gross--Siebert \cite{acgs-2020} and the expanded theory of Ranganathan
\cite{ranganathan2022} give systematic tools for decomposing virtual classes under degenerations. 
Abramovich--Wise proved logarithmic Gromov--Witten invariants with primary insertions are
invariant under logarithmic modifications \cite{aw2018}.

Our main goal is to study the polynomial structure of logarithmic and ordinary
Gromov--Witten invariants of toric surfaces through Bousseau's tropical refined correspondence theorem. 
G\"ottsche conjecture asserted that the Severi degrees of any smooth projective surface $X$
with a sufficiently ample line bundle $L$ are given by universal polynomials \cite{gottsche-1998}.
Tzeng proved the G\"ottsche--Yau--Zaslow formula in \cite{tzeng2012}, 
and Kool--Shende--Thomas gave another proof of G\"ottsche conjecture using 
Hilbert schemes of points in \cite{kool-shende-thomas2011}. 
Janda--Pandharipande--Pixton--Zvonkine showed that the double ramification cycle exhibits polynomial dependence on ramification data through
tautological classes \cite{jppz2017}.
Tseng--You showed that higher genus orbifold Gromov--Witten invariants of root stacks are polynomials in $r$ when $r$ is sufficiently large in \cite{ty-2020},
and they studied the degrees and coefficients of these polynomials in \cite{tseng-you2022}.
Fan--Wu--You generalized this polynomial result to orbifold invariants of root stacks with large ages in \cite{fwy-2020}. 
Ranganathan--Urundolil Kumaran expressed logarithmic Gromov--Witten cycles for toric pairs in terms of logarithmic double ramification cycles and piecewise polynomial classes in \cite{ranganathan-kumaran2024}. 
These polynomiality results reflect the algebraic structure of the enumerative invariants.

\subsection*{Main results}
We use the notation in \cite{AB-2013} to denote the $h$-transverse polygon.
A lattice polygon $\Delta$ in $\rb^2$ is the convex hull of a finite subset of $\zb^2$. 
A lattice polygon $\Delta$ is called \textit{$h$-transverse} provided each edge of $\Delta$ has slope $0,\infty$ or $\frac{1}{k}$ for some integer $k$.
Given an $h$-transverse lattice polygon $\Delta$, we use $d_b$ and $d_t$ to denote the lengths of the bottom and top edges of $\Delta$, if they exist.
Otherwise, let $d_b=0$ or $d_t=0$.
Let $p_r,p_l$ be the numbers of edges on the right and left sides of $\Delta$, respectively.
List the edges on the right side of the polygon $\Delta$ clockwise from top to bottom, and assume that these edges have direction vectors $(c^1_r,-1),\dots,(c^{p_r}_r,-1)$ with respective lattice lengths $d^1_r,\dots,d^{p_r}_r$, so the integers $c_r^1,\ldots,c_r^{p_r}$ satisfy $c^1_r>\cdots>c^{p_r}_r$.
List the edges on the left side of $\Delta$ in counterclockwise order from top to bottom, whose direction vectors are $(c^1_l,-1),\dots,(c^{p_l}_l,-1)$ with lattice lengths $d^1_l,\dots,d^{p_l}_l$; then $c^1_l<\cdots<c^{p_l}_l$.
Note that
$d_t+\sum_{i}c^i_r d^i_r-d_b-\sum_j c^j_l d^j_l=0$.
Let
$\cf_r=(c^1_r,\dots,c^{p_r}_r)$, $\df_r=(d^1_r,\dots,d^{p_r}_r)$, $\cf_l=(c^1_l,\dots,c^{p_l}_l)$, $\df_l=(d^1_l,\dots,d^{p_l}_l)$,
and denote $\cf=(\cf_r;\cf_l)$, $\df=(d_t;\df_r;\df_l)$.
The data $\cf,\df$ determine $\Delta$ up to translation, and we have $\sum_i d_r^i=\sum_jd_l^j$.
We use $\Delta(\cf,\df)$ to denote such an $h$-transverse polygon.
The entries of $\cf$ are the slopes associated to all non-vertical primitive rays in the normal fan of $\Delta(\cf,\df)$.
Let $d_r(\Delta(\cf,\df))$ be the multiset of \textit{right directions} of the $h$-transverse polygon $\Delta(\cf,\df)$ containing each right direction $c_r^i$ repeated $d_r^i$ times. The multiset of  \textit{left directions} $d_l(\Delta(\cf,\df))$ is defined similarly. The \textit{height} of $\Delta(\cf,\df)$ is the cardinality of $d_l(\Delta(\cf,\df))$ or $d_r(\Delta(\cf,\df))$.

The normal fan of $\Delta(\cf,\df)$ determines a normal projective toric surface $X_\cf$ and an ample toric Cartier divisor $L_\Delta$. Let $\rho$ be a ray in the normal fan of $\Delta(\cf,\df)$. Denote by $D_\rho$ the corresponding toric boundary divisor.
The class $\beta_\Delta=\beta_\cf(\df)$ of the divisor $L_\Delta$ satisfies $\beta_\Delta\cdot D_\rho=d_\rho$, where $d_\rho$ is the lattice length of the edge of $\Delta$ with outward normal ray $\rho$.
For fixed $\cf$, the class varies linearly with $\df$ on a stratum with fixed presence or absence of the horizontal edges.

We also use the following notations.
For a partition $\eta$, let $|\eta|$ be the sum of its parts, $l(\eta)$ its number of parts, and $m_w(\eta)$ the multiplicity of $w$.
Unions of partitions are multiset unions.
Let $\Delta(\cf,\df)$ be an $h$-transverse polygon with bottom edge length $d_b>0$. Let $\eta,\mu$ be two partitions such that $|\eta|+|\mu|=d_b$.
The combined logarithmic Gromov--Witten invariants 
$N_{g,\beta_\Delta,n}^{X_\cf|\partial X_\cf,m}(\eta,\mu)$ are defined in Definition \ref{def:comb-log-gw}. They share the same combinatorial structure with the refined counts of floor diagrams $N_{\floor}^{\Delta,g}(\eta,\mu;q)$.
For $\delta\geq1$, we say $\cf,\df$ are {\it sufficiently large and spread out} if $\cf$ and $\df$ satisfy the following relations.
\begin{equation}\label{eq:poly-range}
\begin{aligned}
&d_t,d_b\geq \delta,\qquad
d_t+c_r^1-c_l^1\geq2\delta,\qquad d_b+c_r^{p_r}-c_l^{p_l}\geq2\delta,\\
&d_r^i,d_l^j\geq \delta+1\quad(1\leq i\leq p_r,\ 1\leq j\leq p_l),\\
&|\sum_{v=1}^i d_r^v-\sum_{v=1}^j d_l^v|\geq \delta+2\quad(1\leq i<p_r,\ 1\leq j<p_l).
\end{aligned}
\end{equation}
These are the numerical hypotheses of \cite[Theorem 1.2]{AB-2013}.
For the general varying $\cf$ statements below we also impose
\begin{equation}\label{eq:poly-gap-range}
c_r^i-c_r^{i+1}\geq \delta+1\quad(i<p_r),\qquad
c_l^{j+1}-c_l^j\geq \delta+1\quad(j<p_l).
\end{equation}
These are the additional hypotheses of \cite[Theorem 1.3]{AB-2013}.
Our first result is the relative logarithmic polynomiality theorem, and we give a brief version as follows.

\begin{theorem}[{Theorem~\ref{thm:log-poly-fix}}]
\label{thm:intro-log-polynomiality}
Fix $\delta,m\geq0$ and $p_r,p_l\geq1$. For any sufficiently large and spread out 
$\cf,\df$ and any $\eta,\mu$ with $|\eta|+|\mu|=d_b$ and $l(\mu)\geq\delta$, the combined logarithmic invariant $N_{g,\beta_\Delta,n}^{X_\cf|\partial X_\cf,m}(\eta,\mu)$ is a product of contacts, a factorial factor, and a universal polynomial,
where $g=|\Delta^\circ\cap\zb^2|-\delta+m$ and $n=2M+d_t+l(\mu)+|\Delta^\circ\cap\zb^2|-\delta-1$. Moreover,
this polynomial has length degree $2(\delta+m)$. When $\cf$ also varies in the stated range, the polynomial can be chosen with direction degree $\delta+m$ and joint degree $3(\delta+m)$.
\end{theorem}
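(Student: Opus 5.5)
The plan is to reduce the statement to a purely combinatorial polynomiality result for refined floor diagram counts, and then extract one coefficient of a $u$-expansion.

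\textbf{Step 1: translate to refined counts.} By Definition~\ref{def:comb-log-gw} and Bousseau's tropical refined correspondence, the generating series $\sum_{m\ge0}N_{g,\beta_\Delta,n}^{X_\cf|\partial X_\cf,m}(\eta,\mu)\,u^{2g-2+\cdots}$ equals, after the substitution $q=e^{iu}$ and up to the explicit normalization by contact orders and automorphism factorials, the refined floor diagram count $N_{\floor}^{\Delta,g}(\eta,\mu;q)$. Thus the invariant with Lambda class insertion of index $m$ is, up to the factor $\prod_i\eta_i\prod_j\mu_j$ and a factor like $1/(|\mathrm{Aut}\,\eta|\,|\mathrm{Aut}\,\mu|)$, the coefficient of $u^{2m}$ in the expansion of $(2\sin(u/2))^{\text{power}}\cdot N_{\floor}^{\Delta,g}(\eta,\mu;e^{iu})$. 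I will fix this normalization first. It produces the "product of contacts" and the "factorial factor" in the statement.

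\textbf{Step 2: relative template decomposition.} Following Fomin--Mikhalkin, Block's relative version, and Ardila--Block for $h$-transverse polygons, decompose each relative floor diagram of cogenus $\delta$ into templates of total cogenus $\delta$. The hypotheses \eqref{eq:poly-range} make the allowed positions of templates independent of the boundary data: the bounds on $d_t,d_b$ and on $d_t+c_r^1-c_l^1$ control the top and bottom ends, the bounds $d_r^i,d_l^j\ge\delta+1$ control the edges, and the gap condition separates the direction changes. The assumption $l(\mu)\ge\delta$ guarantees that the free bottom ends do not interact with templates. The refined count then becomes a finite sum over template data, which is $\delta$-independent, of sums over placements. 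Each placement is weighted by products of quantum integers $[w]_q^2$ and refined binomial-type factors coming from the $\eta$-edges. Expanding $[w]_{e^{iu}}^2/w^2$ in $u$, the coefficient of $u^{2k}$ is a polynomial in $w$ of degree $2k$. So the $u^{2m}$-coefficient is a sum over templates of polynomial-weighted lattice-point sums.

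\textbf{Step 3: degree count.} Sums of polynomials over placements in intervals whose endpoints are linear in $\df$ (and in $\cf$, since template edge weights in floor positions depend linearly on the $c$'s) are polynomial by the discrete Faulhaber formula. Each cogenus unit contributes length degree $2$, as in Ardila--Block. Each unit of $m$ adds degree at most $2$ through the weight expansion. Together these give length degree $2(\delta+m)$. When $\cf$ varies, \eqref{eq:poly-gap-range} keeps templates from straddling direction changes, so the weights depend on $\cf$ with degree $\delta+m$, and the joint degree is $3(\delta+m)$. Universality follows because the template list depends only on $\delta,p_r,p_l$.

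\textbf{Main obstacle.} The hardest step is the bookkeeping in Step 2. I must check that the $q$-refined multiplicities of the long edges near the relative (bottom) boundary, combined with the $u$-expansion, still factor so that the sums over placements have polynomial summands of the stated degree, and that the degree bound is exactly $2$ per unit of $m$ rather than larger. I would first verify this on single-template contributions, using the expansion $\frac{\sin(wu/2)}{w\sin(u/2)}=\sum_k P_k(w)u^{2k}$ with $\deg P_k=2k$.
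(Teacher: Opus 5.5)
\textbf{Gap: where the extra degree $2m$ comes from.} Your outline (Bousseau's correspondence, then floor-diagram polynomiality, then extract the $u^{2m}$-coefficient) matches the paper's, but the degree count in Step~3 rests on a false mechanism, and the step that actually produces the $m$-dependence is missing.

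In a diagram of cogenus $\delta$, every floor-to-floor edge satisfies $\ell(e)w(e)-1\leq\delta$, so all weights are at most $\delta+1$; short and top edges have weight one. Hence the $u$-expansion of $\prod_e[w(e)]^2_{e^{iu}}$ has coefficients independent of $\df$. The combinatorial factor $R(e^{iu})$ therefore has the same length-degree bound $2\delta$ in every $u$-degree. The weight expansion adds no length degree at all.

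The $m$-dependence sits in the factor you set aside as normalization. Dividing Bousseau's identity by $u^{\kappa}$, with $\kappa=2g'-2+\ell$, leaves $s(u)^{\kappa}$, where $s(u)=2\sin(u/2)/u$. Writing the contacts as $[w]_q=w\,s(wu)/s(u)$ turns the exponent into $C_\delta=\area(\Delta)-d_b-2\delta$. This is quadratic in $\df$, of direction degree one and joint degree three. So $\df$ appears in an \emph{exponent}, and polynomiality of the $u^{2m}$-coefficient is not automatic. You need that $[u^{2k}]s(u)^{C}$ is a polynomial of degree $k$ in $C$, with top term $(-C/24)^k/k!$. This follows from $s(u)^{C}=\exp(C\log s(u))$ and $\log s(u)=-\sum_r\gamma_ru^{2r}$ with $\gamma_1=1/24$. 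That step is what gives the bounds $2(\delta+m)$, $\delta+m$ and $3(\delta+m)$.

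You also only give upper bounds, while the statement asserts exact degree. Exactness needs the nonzero top part
$\frac{(-1)^m3^\delta(L)_{\underline\delta}}{24^m m!\,\delta!}\area(\Delta)^{\delta+m}$.
This comes from the length-degree-$2\delta$ part of $R$ evaluated at $q=1$ (the empty template), times the top term of $[u^{2m}]s(u)^{C_\delta}$. You cannot reach it if you attribute the $m$-degree to edge weights.

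\textbf{Normalization and the role of $l(\mu)\geq\delta$.} Your normalization is also not the right one.
\begin{enumerate}
\item Only $\prod_w w^{b_w}$ from the free contacts is pulled out. The fixed contacts contribute $[\eta_j]_q/\eta_j$ and the free ones $[\mu_i]_q$. Their residual factors $\prod_w s(wu)^{a_w+b_w}$ involve contact orders as large as $d_b$. Their expansions are polynomial only in the power sums $S_{2r}=\sum_w w^{2r}(a_w+b_w)$. These must be adjoined as independent parameters of the universal polynomial, alongside $L$ and the $a_w,b_w$ with $w\leq\delta$.
\item The factorial factor is $(L-\delta)!/\prod_w b_w!$, not automorphism factors of $\eta$ and $\mu$. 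Interleaving the free bottom vertices with the terminal vertices produces $(L-\sum_{i,w}b_{iw}+h_+)!$, which is factorial in $L$. The hypothesis $l(\mu)\geq\delta$ is what lets you divide by $(L-\delta)!$ and leave a rising factorial, which is polynomial in $L$.
\item Contrary to your Step~2, free ends do attach to the terminal floors. This is recorded by the $B$-matrix of Block's extended templates.
\item Condition \eqref{eq:poly-gap-range} is used to force the natural side orders, so the $\cf$-dependent counts $m_\cf(\delta_0)$ drop out. The directions $\cf$ then enter through the widths $W_j$ and through $C_\delta$, not through edge weights.
\end{enumerate}
Your direct placement-sum approach to the combinatorial part, in the style of Fomin--Mikhalkin and Ardila--Block, is a plausible alternative to the paper's long-edge-graph logarithm. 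The gaps above lie elsewhere.
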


If $(\eta,\mu)=(\emptyset,1^{d_b})$, we use $N_{g,\beta_\Delta,n}^{X_\cf|\partial X_\cf,m}$ to denote $N_{g,\beta_\Delta,n}^{X_\cf|\partial X_\cf,m}(\eta,\mu)$.
\begin{corollary}[Corollary~\ref{thm:log-simple-fix}]
\label{cor:intro-log-simple}
Under the corresponding hypotheses of Theorem~\ref{thm:intro-log-polynomiality}, 
the combined logarithmic invariant $N_{g,\beta_\Delta,n}^{X_\cf|\partial X_\cf,m}$ is the polynomial $\ql_{\delta,m}^\cf(\df)$, or
$\ql_{\delta,m}(\cf,\df)$ when directions vary.
\end{corollary}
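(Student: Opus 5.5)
This corollary should follow by specializing Theorem~\ref{thm:intro-log-polynomiality} (Theorem~\ref{thm:log-poly-fix}) to the boundary data $(\eta,\mu)=(\emptyset,1^{d_b})$ and checking that the extra factors in the product formula become trivial. First I verify the hypothesis $l(\mu)\geq\delta$: here $l(\mu)=d_b$, and the sufficiently large and spread out conditions \eqref{eq:poly-range} give $d_b\geq\delta$. Hence the theorem applies with $g=|\Delta^\circ\cap\zb^2|-\delta+m$ and $n=2M+d_t+d_b+|\Delta^\circ\cap\zb^2|-\delta-1$. It writes $N_{g,\beta_\Delta,n}^{X_\cf|\partial X_\cf,m}(\emptyset,1^{d_b})$ as a product of three pieces: a contact factor, a factorial factor, and the universal relative node polynomial.

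The next step is to evaluate each prefactor for $\eta=\emptyset$ and $\mu=1^{d_b}$.
\begin{itemize}
\item The contact factor should have the form $\prod_i\eta_i$, or something analogous in the tangency orders. It equals $1$ because $\eta$ is empty and all parts of $\mu$ equal $1$.
\item The factorial factor presumably arises from the symmetry $\prod_w m_w(\mu)!$, or its inverse, that comes from labelling unfixed contact points of equal weight. With $\mu=1^{d_b}$ it becomes $d_b!$ or $1/d_b!$.
\end{itemize}
I would then check, against Definition~\ref{def:comb-log-gw}, whether this $d_b!$ has already been absorbed. Either the definition of the combined invariant divides by $|\aut(\mu)|$, or the polynomial $\ql_{\delta,m}^\cf(\df)$ is defined as the relative polynomial at $(\emptyset,1^{d_b})$ with that normalization built in. In either case the statement reduces to the identity $N_{g,\beta_\Delta,n}^{X_\cf|\partial X_\cf,m}=\ql_{\delta,m}^\cf(\df)$.

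For the varying-direction version, I would additionally impose \eqref{eq:poly-gap-range} and repeat the same specialization with the polynomial in $(\cf,\df)$ from the second part of the theorem. That polynomial has direction degree $\delta+m$ and joint degree $3(\delta+m)$, and the result is $\ql_{\delta,m}(\cf,\df)$.

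The main obstacle is matching normalizations. The relative polynomial in the theorem depends on $\eta,\mu$ only through the template and long-edge data, so I must confirm that at $\mu=1^{d_b}$ the factorial prefactor exactly cancels the $d_b!$ orderings of the unfixed bottom ends. This cancellation is what makes the count of floor diagrams with all-weight-one bottom ends agree with the unnormalized count, as in the Ardila--Block setting. I would check this directly on the floor-diagram side by comparing $N_{\floor}^{\Delta,g}(\emptyset,1^{d_b};q)$ with the non-relative refined count of \cite{AB-2013,bg2016}. I would then transport the comparison through Bousseau's correspondence \cite{bou2019} to the combined invariant.
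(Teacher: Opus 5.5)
Specializing Theorem~\ref{thm:log-poly-fix} is the right starting point, and the contact factor $\prod_w w^{b_w}$ is indeed $1$. The gap is that you misidentify the factorial factor, and that factor is exactly where the content of the corollary lies. The prefactor in \eqref{eq:log-poly} is $\frac{(L-\delta)!}{\prod_w b_w!}$. The $1/\prod_w b_w!$ does reflect the unlabelled free contacts. The $(L-\delta)!$, however, was pulled out of the interleaving count of the $B$-vertices at the last floor (Lemma~\ref{lem:poly-boundary}) so that the remainder is polynomial in $L$.

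At $\mu=1^{d_b}$ the prefactor is therefore $\frac{(d_b-\delta)!}{d_b!}=\frac{1}{(d_b)_{\underline\delta}}$, not $d_b!$ or $1/d_b!$. Since $d_b=d_t+\sum_ic_r^id_r^i-\sum_jc_l^jd_l^j$ is linear in $\df$, this is a non-constant rational function of $\df$ for $\delta>0$. No normalization convention can absorb it. The specialization only gives $N^{X_\cf|\partial X_\cf,m}_{g,\beta_\Delta,n}=\frac{1}{(d_b)_{\underline\delta}}\,\rl^\cf_{\delta,m}(\df,d_b,\mathbf 0,(d_b,0,\ldots,0),(d_b,\ldots,d_b))$. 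Your ``in either case'' step asserts, but does not prove, that this quotient is a polynomial.

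The degree claims you inherit fail for the same reason. Substituting $L=b_1=S_{2r}=d_b$ raises the $\df$-degree: the leading part of $\rl^\cf_{\delta,m}$ already carries $(L)_{\underline\delta}$. The degrees $2(\delta+m)$, $\delta+m$, $3(\delta+m)$ only come back after the division.

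What is missing is the divisibility of the specialized relative polynomial by $(d_b)_{\underline\delta}$. The paper proves it in Proposition~\ref{prop:poly-ordinary}. At $(\emptyset,1^{d_b})$ only extended templates with $A=0$ and $B$ supported in the first column occur. With $k=\sum_ib_{i1}$, every $\mathcal B_{T,\delta}$ contains the factor $(d_b)_{\underline k}\frac{(d_b-\delta+1)^{\overline{\delta-k+h_+(\omega)}}}{h_+(\omega)!}=(d_b)_{\underline\delta}\binom{d_b-k+h_+(\omega)}{h_+(\omega)}$. Hence $R=(d_b)_{\underline\delta}P$ termwise. The degree bounds for $P$ are then recomputed from $\binom{d_b-k+h_+}{h_+}\prod_j\binom{S_j+h_j}{h_j}$ rather than inherited.

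Next, since $S_{2r}=d_b$ for every $r$, each $C_\delta+S_{2r}$ equals $B_\delta=\area(\Delta)-2\delta$. So \eqref{eq:poly-log-series} collapses to $\sum_v N^{X_\cf|\partial X_\cf,v}_{g'+v,\beta_\Delta,n}u^{2v}=s(u)^{B_\delta}P(e^{iu})$. Then $\ql^\cf_{\delta,m}:=[u^{2m}]\bigl(s(u)^{B_\delta}P^\cf_\delta(\df;e^{iu})\bigr)$ is visibly polynomial, with the stated degree and leading term. The same holds for $\ql_{\delta,m}$ under \eqref{eq:poly-gap-range}.

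Your closing idea would supply exactly this if carried out: compare $N_{\floor}^{\Delta,g}(\emptyset,1^{d_b};q)$ with the refined Severi degrees of \cite{bg2016} and push the result through \cite{bou2019}. Polynomiality of that count is equivalent to the divisibility. But you present it as checking that a constant $d_b!$ cancels, which is not the obstruction. It has to be the core of the argument, not a consistency check.
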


The construction of the combined invariants $N_{g,\beta_\Delta,n}^{X_\cf|\partial X_\cf,m}(\eta,\mu)$ is motivated by the relation between the refined counts of floor diagrams and the refined counts of connected floor diagrams. Indeed, the formula expressing the combined invariants $N_{g,\beta_\Delta,n}^{X_\cf|\partial X_\cf,m}(\eta,\mu)$ in terms of connected invariants is the same as the formula expressing the refined counts of floor diagrams in terms of connected floor diagrams. We use the combinatorial tools of long edge graphs and the logarithm of generating functions in
\cite{BCK-2014,liu2016,liu-osserman2018} to obtain the refined relative polynomial for the refined counts of floor diagrams of degree $\Delta(\cf,\df)$, generalizing the results in \cite[Theorem 1]{Block-2012} and \cite[Theorem 7.8]{bg2016}. Bousseau's correspondence theorem then expresses the logarithmic series in terms of the refined relative polynomial after the change of variables $q=e^{iu}$.
Extracting the coefficient of $u^{2m}$ gives the required polynomial.

The surface $X_\cf$ is smooth precisely when adjacent primitive rays of the normal fan span $\zb^2$.
At internal corners, this is equivalent to
$$
c_r^i-c_r^{i+1}=1,\qquad c_l^{j+1}-c_l^j=1.
$$
If the top edge is absent, one must also require $c_r^1-c_l^1=1$; if the bottom edge is absent, one must require $c_l^{p_l}-c_r^{p_r}=1$.

When the normal fan of the polygon $\Delta(\cf,\df)$ is smooth, we use $N_{g,\beta_\Delta,n}^{\cf,m}$ to denote the combined Gromov--Witten invariants (see equation \eqref{eq:comb-gw}).
The following is a concise version of our second result.
\begin{theorem}[Theorem~\ref{thm:ordinary-poly-fixed}]
\label{thm:intro-ordinary-polynomiality}
Fix $\cf$ with smooth normal fan and $\delta,m\geq0$. For $d_b>0$ and a sufficiently large and spread out $h$-transverse polygon $\Delta(\cf,\df)$, the combined ordinary invariant satisfies
$$
N^{\cf,m}_{g(\df),\beta(\df),n(\df)}
=\ql_{\delta,m}^\cf(\df).
$$
\end{theorem}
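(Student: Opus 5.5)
The plan is to reduce Theorem~\ref{thm:intro-ordinary-polynomiality} to Corollary~\ref{cor:intro-log-simple}: when the normal fan is smooth, the combined ordinary invariant $N^{\cf,m}_{g,\beta_\Delta,n}$ should agree with the combined logarithmic invariant $N_{g,\beta_\Delta,n}^{X_\cf|\partial X_\cf,m}$ for $(\eta,\mu)=(\emptyset,1^{d_b})$.

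\emph{Step 1 (connected comparison).} For a smooth projective toric surface $X_\cf$, with point insertions and a $\lambda$-class insertion, compare connected ordinary and connected logarithmic invariants in which all contacts with $\partial X_\cf$ are transverse and unconstrained. Moving the $d_b$ transverse bottom contact points and the other boundary contacts into general position, the log invariant with contact order $1$ everywhere equals the ordinary invariant. This is the rank-one-contacts comparison: the forgetful map from log to ordinary maps is virtually birational when all tangencies have order $1$, since the log structure is determined on the open locus of maps transverse to the boundary. Here I would cite the Abramovich--Wise invariance \cite{aw2018} together with the standard identification of transverse log invariants with absolute ones for smooth toric pairs, as used by Bousseau \cite{bou2019}.

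The factorial and contact factors in Theorem~\ref{thm:intro-log-polynomiality} are trivial for $\mu=1^{d_b}$ up to the $d_b!$ ordering of the unmarked contacts. This ordering factor should be exactly what the definition of the combined ordinary invariant in \eqref{eq:comb-gw} absorbs.

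\emph{Step 2 (combination).} The combined invariants on both sides are built from connected invariants by the same disconnected-sum formula. This is the formula mirroring the relation between floor diagrams and connected floor diagrams described after Corollary~\ref{cor:intro-log-simple}. The connected pieces have classes $\beta'\le\beta_\Delta$ whose contacts are again all of order $1$, so Step 1 applies to each piece. Summing gives $N^{\cf,m}_{g,\beta_\Delta,n}=N_{g,\beta_\Delta,n}^{X_\cf|\partial X_\cf,m}$ with the same $g(\df)$ and $n(\df)$.

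\emph{Step 3.} Since $\cf,\df$ satisfy \eqref{eq:poly-range} and $l(1^{d_b})=d_b\ge\delta$, Corollary~\ref{cor:intro-log-simple} gives the value $\ql^\cf_{\delta,m}(\df)$.

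\emph{Main obstacle.} I expect Step 1 to be the hardest part: identifying the connected log invariant with transverse contacts with the ordinary one in the presence of the $\lambda_{g-g'}$-type class and in higher genus. Transverse contact points, if unconstrained, contribute no conditions, but components mapping into $\partial X_\cf$ could contribute extra virtual terms.

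My first attempt is to use the Gross--Siebert/Abramovich--Chen comparison. The map $\overline{M}^{\log}_{g,n}(X|\partial X,\beta)\to\overline{M}_{g,n}(X,\beta)$ with order-one contacts forgotten pushes the log virtual class forward to the ordinary one; this is the analogue of the Abramovich--Marcus--Wise comparison for transverse contacts. The $\lambda$-class and point classes pull back, so the projection formula finishes the argument.

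Failing that, I would match both sides to Bousseau's tropical count directly via Mikhalkin/Shustin-type correspondence for the ordinary theory on smooth toric surfaces.
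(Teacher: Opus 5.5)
\textbf{Step 1 is a genuine gap.} Your reduction to Corollary~\ref{cor:intro-log-simple} is the paper's route: Theorem~\ref{thm:ordinary-poly-fixed} is proved by combining Corollary~\ref{cor:poly-absolute-log-combined} with Corollary~\ref{thm:log-simple-fix}. But Step~1 is where all the geometric content lies, and your justification for it does not hold.

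There is no general theorem that the forgetful map from transverse log maps to ordinary stable maps is ``virtually birational.'' Abramovich--Wise concerns log modifications of the target. Abramovich--Marcus--Wise compares log theory with relative theory, not with absolute theory. The unconditional claim is in fact false. On the Hirzebruch surface $F_2$ (smooth, $h$-transverse), genus-zero stable maps whose image contains the $(-2)$-curve $E$ contribute to the ordinary invariants (the Abramovich--Bertram phenomenon). This happens also for ample classes such as $2E+5F$, where $d_E=1<\delta=2$. The transverse log invariants, by contrast, are the tropical, i.e.\ enumerative, counts.

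Your argument never uses the hypothesis that every edge has $d_\rho\geq\delta$. This is supplied by \eqref{eq:poly-range}, and the paper uses it essentially:
\begin{enumerate}
\item Lemma~\ref{lem:poly-boundary-exclusion} uses surjectivity of $H^0(X,L_\Delta)\to H^0(D_\rho,L_\Delta|_{D_\rho})$ to get $\dim|L_\Delta-D_\rho|=\dim|L_\Delta|-d_\rho-1\leq n-1$. Hence no effective cycle of class $\beta_\Delta$ through the $n$ general points contains a boundary curve.
\item Proposition~\ref{prop:poly-absolute-log} degenerates $X_\cf$ to the normal cones of its boundary divisors and applies the Abramovich--Chen--Gross--Siebert decomposition formula.
\item It then uses the Hodge vanishing of Lemma~\ref{lem:poly-incidence-hodge} with the count $1\geq|V_0|+b_1(\Gamma)+(c_1(X)\cdot\beta_\Delta-s)$. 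This forces the tropical type to be a star with $d_\rho$ weight-one leaves at each $u_\rho$.
\item Mumford's relation $\lambda_{g_v}^2=0$ forces those leaves to have genus zero.
\item Finally, $|\aut(h)|=\prod_\rho d_\rho!$ is exactly the normalization built into the log invariant.
\end{enumerate}
So your factorial bookkeeping is also off: the factor is $\prod_\rho d_\rho!$, not $d_b!$, and it is absorbed on the log side, not by \eqref{eq:comb-gw}, which contains no factorials.

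\textbf{Step 2 needs more care.} Once Step~1 carries its correct hypothesis, that hypothesis is imposed on $\Delta$, not on the Minkowski summands $\Delta_i$. So a connected comparison cannot simply be applied factor by factor. The paper instead degenerates all factors of a summand of \eqref{eq:comb-gw} simultaneously. It applies Lemma~\ref{lem:poly-boundary-exclusion} to the total pushforward cycle, which has class $\beta_\Delta$ and passes through all $n$ points. Since every $\Lambda_i$ is nonempty, $|V_0|\geq t$. The Hodge-vanishing count becomes $t\geq|V_0|+b_1(\Gamma)+(c_1(X)\cdot\beta_\Delta-s)$, which again gives one weight-one star per component.

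\textbf{Your fallback does not avoid the issue.} Matching ordinary invariants directly to tropical counts via Mikhalkin/Shustin computes enumerative counts. These differ from ordinary Gromov--Witten invariants precisely when boundary contributions are present, as in the $F_2$ example. So you would still need the boundary-exclusion and degeneration argument. Step~3 is fine.
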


The main geometric ingredient of the proof of Theorem \ref{thm:intro-ordinary-polynomiality} is the comparison of ordinary and full-boundary logarithmic invariants (see Proposition \ref{prop:poly-absolute-log}). With this comparison proposition, we obtain the equality of combined logarithmic invariants and the combined ordinary invariants. Then, Theorem \ref{thm:intro-ordinary-polynomiality} follows from Corollary \ref{cor:intro-log-simple}.
Note that the projective plane is not included in Theorem \ref{thm:intro-ordinary-polynomiality}.
One can obtain a similar result for $\pb^2$, by using the polynomiality of refined Severi degrees of $\pb^2$ in \cite{bg2016} and the comparison in Proposition \ref{prop:poly-absolute-log}.

The combined polynomial also determines connected invariants when the mixed intersections rule out
decompositions.
\begin{corollary}[Corollary~\ref{cor:ordinary-connected-asymptotics}]
\label{cor:intro-connected-asymptotics}
Fix $\cf$ with smooth normal fan and $\delta,m\geq0$. If every Minkowski decomposition of
$\Delta$ into polygons $\Delta_1,\ldots,\Delta_t$, where $t>1$, satisfies
$\delta<\sum_{i<j}\ml(\Delta_i,\Delta_j)$,
then, for $d_b>0$ and a sufficiently large and spread out $h$-transverse polygon $\Delta(\cf,\df)$, the combined invariant equals
the connected invariant $N^\cf_{g(\df),\beta(\df),n(\df)}(m)$. Along any fixed dilation $k\Delta$
that eventually satisfies that length range, we have the asymptotics:
$$
N^\cf_{g(k\df),\beta(k\df),n(k\df)}(m)
\sim\frac{(-1)^m3^\delta}{24^m m!\delta!}
\area(\Delta)^{\delta+m}k^{2(\delta+m)}
\qquad(k\longrightarrow\infty).
$$
\end{corollary}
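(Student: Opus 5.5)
The argument has two parts. First we show that, under the Minkowski hypothesis, the combined invariant reduces to the connected one. Then we compute the leading term of $\ql^\cf_{\delta,m}$ along dilations.

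For the first part, recall that the combined invariant $N^{\cf,m}_{g,\beta,n}$ of \eqref{eq:comb-gw} is defined by the same formula that writes refined counts of all floor diagrams in terms of connected ones. It is therefore a sum over ordered set partitions of the point conditions, and over splittings of $\beta_\Delta$ into classes $\beta_{\Delta_1},\dots,\beta_{\Delta_t}$, of products of connected invariants. Each factor has genus $g_i$, and each has its own $\lambda$-class exponent $m_i$ with $\sum m_i=m$.

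A summand with $t>1$ corresponds to a Minkowski decomposition $\Delta=\Delta_1+\dots+\Delta_t$. Dimension counting forces $\sum_i(|\Delta_i^\circ\cap\zb^2|-\delta_i)$ to match $|\Delta^\circ\cap\zb^2|-\delta$. Using Pick's formula, or equivalently the identity $g(\Delta)=\sum_i g(\Delta_i)+\sum_{i<j}\ml(\Delta_i,\Delta_j)-(t-1)$ together with the $t-1$ fewer point conditions, this gives
$$\delta=\sum_i\delta_i+\sum_{i<j}\ml(\Delta_i,\Delta_j).$$
Since every $\delta_i\ge0$, the hypothesis $\delta<\sum_{i<j}\ml(\Delta_i,\Delta_j)$ makes every such summand vanish. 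Only $t=1$ survives, so the combined invariant equals $N^\cf_{g,\beta,n}(m)$. Theorem~\ref{thm:intro-ordinary-polynomiality} then identifies this connected invariant with $\ql^\cf_{\delta,m}(\df)$ in the stated range.

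For the asymptotics, along $k\df$ the value $\ql^\cf_{\delta,m}(k\df)$ is a polynomial in $k$ of degree at most $2(\delta+m)$, because Theorem~\ref{thm:intro-log-polynomiality} gives length degree $2(\delta+m)$. We need its top coefficient. The plan is to use the generating-function structure behind the relative node polynomials: the long edge graph and logarithm construction of \cite{BCK-2014,liu2016} shows that $\sum_\delta \ql_\delta x^\delta$ is the exponential of a series whose coefficients are linear in the "area-type" quantities of $\Delta$. In the refined version this is an exponential in $x$ and $u$, after setting $q=e^{iu}$ in Bousseau's correspondence.

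The top-degree part in $k$ then comes only from the quadratic term $\area(k\Delta)=k^2\area(\Delta)$. For the node part, the classical leading behaviour of Severi degrees, $N^{\delta}\sim (3\area)^\delta/\delta!$, i.e.\ $(3d^2)^\delta/\delta!$ for $\pb^2$, gives $3^\delta\area^\delta/\delta!$. For the $u^{2m}$ part, we expect the leading behaviour of the $\lambda_g$-insertion to be governed by the quantum multiplicity $[w]_q^2$ expanded in $u$. Summing $\log$ of the vertex factors $(q^{w/2}-q^{-w/2})/(q^{1/2}-q^{-1/2})$ over $2\area$ lattice triangles gives an exponent $-\frac{u^2}{24}\cdot\area$ to top order. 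Extracting the $u^{2m}$ coefficient yields $(-1)^m\area^m/(24^m m!)$. Multiplying the two contributions, since they combine in the exponential, gives the stated leading coefficient
$$\frac{(-1)^m3^\delta}{24^m m!\delta!}\area(\Delta)^{\delta+m}.$$

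The main obstacle is the last step: showing rigorously that the leading coefficient of $\ql_{\delta,m}$ factors exactly as the product of the node-part and $\lambda$-part exponentials. I would try to extract it from the multiplicativity statement of G\"ottsche--Kikwai type generalized to $h$-transverse polygons. The first thing to check is that only the $\area$-linear terms of the logarithm contribute at degree $2(\delta+m)$, by bounding the degree of each template contribution.
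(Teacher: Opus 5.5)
Your first step, killing the $t>1$ summands via $\delta=\sum_i\delta_i+\sum_{i<j}\ml(\Delta_i,\Delta_j)$, is essentially the paper's argument; the paper phrases it through reducible tropical curves. You do need to justify $\delta_i\geq0$ for a nonzero factor: a factor with $\delta_i<0$ has more point conditions than $\dim|L_{\Delta_i}|$, so no effective cycle of class $\beta_i$ passes through them. The asymptotic part, however, has two genuine gaps.

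\textbf{The mixed-area condition along the dilation.} The asymptotic is for the \emph{connected} invariant along $k\Delta$, and the statement only assumes that $k\Delta$ eventually satisfies the length range. To replace $N^\cf_{g(k\df),\beta(k\df),n(k\df)}(m)$ by $\ql^\cf_{\delta,m}(k\df)$, you need the mixed-area condition for every Minkowski decomposition of $k\Delta$, and you never verify it. It is not inherited from $\Delta$, since $k\Delta$ has decompositions (such as $\Delta+\cdots+\Delta$) that do not come from decompositions of $\Delta$. The paper supplies a bound that is uniform over all decompositions. Write $k\beta_\Delta=\beta_1+\beta'$ with $\beta_1,\beta'$ nonzero nef and $\beta_1\cdot\beta'\leq\delta$. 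If $\beta_1^2=0$, then $\beta_1\cdot\beta'=k\,\beta_\Delta\cdot\beta_1\geq k$ by ampleness. If $\beta_1^2>0$ and $(\beta')^2>0$, the Hodge index theorem gives $\beta_1^2(\beta')^2\leq(\beta_1\cdot\beta')^2\leq\delta^2$, which bounds $k^2\area(\Delta)=(\beta_1+\beta')^2$ independently of $k$.

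\textbf{The leading coefficient.} Your final constant is right, but the derivation is heuristic and puts the $u$-dependence in the wrong place. Since $[1]_q=1$, unimodular vertices contribute nothing to $\log\prod[w]_q^2$. (Also, with the paper's normalized area there are $\area(\Delta)$ unimodular triangles, not $2\area(\Delta)$.) The factor $(-1)^m\area^m/(24^m m!)$ comes instead from the normalization $\bigl(2\sin(u/2)/u\bigr)^{2g'-2+\ell}=s(u)^{\area(\Delta)-2\delta}$ in Bousseau's correspondence. The factorization you flag as the main obstacle needs no G\"ottsche--Kikwai multiplicativity. Corollary~\ref{thm:log-simple-fix} gives $\ql^\cf_{\delta,m}=[u^{2m}]\bigl(s(u)^{\area(\Delta)-2\delta}P^\cf_\delta(\df;e^{iu})\bigr)$. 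Every $u^{2j}$-coefficient $\rho_j$ of $P^\cf_\delta(\df;e^{iu})$ has length degree at most $2\delta$. The $u^{2k}$-coefficient $e_k$ of $s(u)^{\area(\Delta)-2\delta}$ has length degree at most $2k$, with top part $(-\area(\Delta)/24)^k/k!$. Hence only $e_m\rho_0$ reaches degree $2(\delta+m)$. By Lemma~\ref{lem:poly-interior}(3) at $q=1$, $\rho_0=P^\cf_\delta(\df;1)$ has top part $3^\delta\area(\Delta)^\delta/\delta!$. This leading term is already recorded in Corollary~\ref{thm:log-simple-fix} and Theorem~\ref{thm:ordinary-poly-fixed}. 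The paper therefore just cites it together with $\area(k\Delta)=k^2\area(\Delta)$.
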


This asymptotic concerns fixed cogenus and fixed Hodge degree, with genus growing quadratically
under dilation. It is therefore different from the fixed-genus asymptotics of plane Gromov--Witten
invariants. Tian--Wei obtained large-degree expansions in genera zero and one \cite{tian-wei2018},
and Guo--Tian--Xu established complete expansions at every fixed genus in their recent preprint
\cite{guo-tian-xu2026}. 
Our asymptotic follows from an exact polynomial valid for sufficiently large degree.

Our final result concerns the integrality, after the change of variables $q=e^{iu}$, of the coefficients of the series
$P^{X_\cf}_{g',\beta_\Delta}(u)$ in equation \eqref{eq:ding-hu-series}.
\begin{theorem}[Theorem~\ref{thm:ding-hu-bps}]
\label{thm:intro-bps}
Let $\Delta$ be $h$-transverse with smooth normal fan and $d_b>0$. Let
$0\leq\delta<|\Delta^\circ\cap\zb^2|$, and take $g'=|\Delta^\circ\cap\zb^2|-\delta$. If every boundary edge of $\Delta(\cf,\df)$ has lattice length at least $\delta$, then the series
$P^{X_\cf}_{g',\beta_\Delta}(u)$ in equation \eqref{eq:ding-hu-series} is an integral Laurent
polynomial in $q$, after the change of variables $q=e^{iu}$. It agrees with the connected refined tropical count of degree $\Delta$ and
genus $g'$ with simple free contacts.
\end{theorem}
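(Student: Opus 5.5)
The plan is to pass to the tropical side via Bousseau's correspondence and then control the refined count by a degree bound on connected floor diagrams. The series $P^{X_\cf}_{g',\beta_\Delta}(u)$ in \eqref{eq:ding-hu-series} should be a generating function of connected genus-$g'$ invariants with Lambda class insertions, summed over the Hodge degree. Since the normal fan is smooth, Proposition~\ref{prop:poly-absolute-log} lets us replace the ordinary connected invariants by full-boundary logarithmic connected invariants with simple free contacts (the $(\emptyset,1^{d_b})$ case, with the remaining boundary contacts simple as well).

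Bousseau's correspondence theorem \cite{bou2019} then identifies the resulting series, after $q=e^{iu}$, with the connected refined tropical count of degree $\Delta$ and genus $g'$. The normalisation should be exactly the factor $(-i)(q^{1/2}-q^{-1/2})$ to the power $2g'-2+|\partial\Delta\cap\zb^2|$ that appears in the definition of the series. By the refined floor diagram formula (the refined analogue of \cite{bm2008,bg2016} for $h$-transverse polygons), this count is a finite sum over connected marked floor diagrams of genus $g'$. Each summand is a product of quantum integers $[w]_q^2$ over edges. This already gives a Laurent polynomial in $q^{1/2}$, and in fact in $q$, with nonnegative integer coefficients, since each $[w]_q^2$ has integer exponents. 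The agreement statement is then immediate once the normalisation factors match.

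The one remaining issue is that a priori $P$ is only a formal power series in $u$. It could in principle involve contributions not captured by the tropical count, for instance disconnected or multiply covered pieces if \eqref{eq:ding-hu-series} is defined through a logarithm or a combined series. To rule this out I would use the edge-length hypothesis, namely that every boundary edge has lattice length at least $\delta$. This hypothesis should guarantee that any splitting of a genus-$g'$ configuration into $t>1$ components costs at least $\sum_{i<j}\ml(\Delta_i,\Delta_j)>\delta$ nodes, in the spirit of Corollary~\ref{cor:intro-connected-asymptotics}. I would prove this mixed-area bound directly for Minkowski decompositions of an $h$-transverse polygon. The key estimate is that each summand $\Delta_i$ contributes boundary length, so that $\ml(\Delta_i,\Delta_j)$ is bounded below by edge lengths of $\Delta$, which are at least $\delta$. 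One checks that equality is excluded because $\delta<|\Delta^\circ\cap\zb^2|$ forces at least one summand to be non-degenerate.

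With this bound, cogenus $\delta$ forbids decompositions. Hence the combined series coincides with the connected one, and the connected refined count equals the combined refined floor count $N_{\floor}^{\Delta,g'}$, which is polynomial in $q^{\pm1}$. The main obstacle I expect is this mixed-area estimate, in particular the borderline cases where one summand is a segment. A segment contributes only $\ml = $ its length times a width, and I would handle it separately using the simple-contact condition on the bottom edge.
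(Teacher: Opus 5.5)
Your first three paragraphs are the paper's proof, and on their own they prove the theorem. Proposition~\ref{prop:poly-absolute-log} replaces each $N^\cf_{g,\beta_\Delta,n}(g-g')$ by $N^{X_\cf|\partial X_\cf}_{g,\beta_\Delta,n}(g-g',\emptyset,1^{d_b})$. Theorem~\ref{thm:log-trop} then applies with $\ell=c_1(X_\cf)\cdot\beta_\Delta$ and $(-i)(q^{1/2}-q^{-1/2})=2\sin(u/2)$, giving $P^{X_\cf}_{g',\beta_\Delta}(u)=N^{\Delta,g'}_{c,\mathrm{trop}}(\emptyset,1^{d_b};q)$ exactly. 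Finally, the connected half of Theorem~\ref{thm:floor-trop}, which needs no assumption $\delta<M$, rewrites this as $\sum_{\dl}\nu_{\emptyset,1^{d_b}}(\dl)\prod_e[w(e)]_q^2\in\zb[q^{\pm1}]$.

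You should delete the fourth paragraph, which addresses a non-issue and rests on a false lemma. The series \eqref{eq:ding-hu-series} is built directly from the connected invariants on $\overline M_{g,n}(X_\cf,\beta_\Delta)$, not from a logarithm or a combined series. Theorem~\ref{thm:log-trop} is an identity of power series in $u$, so no stray contributions remain to be ruled out.

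The mixed-area bound you propose there is also false. Take $\Delta=[0,a]\times[0,\delta]$, so $X_\cf=\pb^1\times\pb^1$, with $\delta\geq2$ and $a$ large. All hypotheses hold: every edge has length at least $\delta$, and $\delta<(a-1)(\delta-1)$. Yet $\Delta=[0,a-1]\times[0,\delta]+[0,1]\times\{0\}$ has $\ml=\delta$ exactly. So a cogenus-zero curve of degree $[0,a-1]\times[0,\delta]$ together with a vertical tropical line is a reducible curve of genus $g'$. Neither your condition $\delta<|\Delta^\circ\cap\zb^2|$ nor the simple bottom contact of that line excludes it. Hence the step \emph{cogenus $\delta$ forbids decompositions} fails, and the combined and connected counts genuinely differ here. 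This is harmless only because the theorem concerns the connected count.

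Relatedly, you present Proposition~\ref{prop:poly-absolute-log} as a consequence of smoothness alone. In fact $d_\rho\geq\delta$ is a hypothesis of that proposition, and that is where the edge-length condition is actually used. Lemma~\ref{lem:poly-boundary-exclusion} shows that no effective cycle of class $\beta_\Delta$ through the $n=|\Delta\cap\zb^2|-\delta-1$ general points contains a boundary divisor. This is what forces the degeneration formula to localize on the star with a single vertex on $X_\cf$ and genus-zero fiber leaves.
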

When $g'=0$, Arg\"uz--Bousseau
\cite{arguz-bousseau-2025} showed that the series $P^{X}_{g',\beta_\Delta}(u)$ is an integral Laurent
polynomial in $q$ for any smooth projective surface $X$, after the change of variables $q=e^{iu}$.
When $g'>0$, the corresponding identity for toric del Pezzo surfaces and Hirzebruch surfaces is proved in \cite{dh2026} for nef classes, without our additional bound $d_\rho\geq\delta$. Theorem~\ref{thm:intro-bps} also applies to smooth $h$-transverse toric blowups outside those families.

\subsection*{Relation with previous results}\label{subsec:intro-comparison}

The starting point for our polynomiality results is the node polynomial results of Ardila--Block and
Block--G\"ottsche \cite{AB-2013,bg2016}. Ardila--Block obtain universal polynomials in polygon data
for unrefined Severi degrees, and Block--G\"ottsche establish refined tropical and floor diagram
formulas and polynomiality results. 
We generalize the relative node polynomial of (refined) floor diagrams 
in \cite[Theorem 1]{Block-2012} and \cite[Theorem 7.8]{bg2016} to $h$-transverse polygons.
Then we apply Bousseau's tropical refined correspondence theorem to transfer the polynomial
structures of refined floor diagrams to logarithmic Gromov--Witten invariants.
At $m=0$ and with simple free contacts, the resulting polynomial recovers the unrefined
node polynomial in the common numerical range. 
The passage to ordinary invariants on smooth toric surfaces requires the geometric comparison proved in
Section \ref{sec:poly-ordinary-comparison}; it is not a consequence of the combinatorial
polynomiality theorem alone. Here universality refers to the specified polygon variables and fixed
combinatorial data, rather than a claim that all the formulas depend only on the four topological
numbers in the G\"ottsche conjecture.

The variables and the genus regime also distinguish these results from other polynomiality theorems.
In the double Gromov--Witten and boundary-contact problems of
\cite{ardila-brugalle2017,corey-markwig-ranganathan2025,blomme23-compos}, contact orders vary and
chamber structures arise. In \cite{brugalle-jaramillo2022,mevel2026,blomme-mevel2025}, one fixes the
genus and studies coefficients at a fixed distance from the highest power of the refinement
variable. Our polynomiality results are obtained by fixing the cogenus and the degree of the Lambda classes. 
Root-order polynomiality \cite{fwy-2020,ty-2020,tseng-you2022} and tautological
descriptions through double ramification cycles \cite{jppz2017,ranganathan-kumaran2024} provide
other forms of structure, but do not directly supply the degree polynomial in
Theorem~\ref{thm:ordinary-poly-fixed}. 

\subsection*{Organization of the paper}\label{subsec:intro-organization}

Section~\ref{sec:diagrams} recalls relative floor diagrams and refined tropical counts 
and introduces the combined invariants, including their connected and disconnected 
correspondences and the logarithmic correspondence. 
Section~\ref{sec:log-polynomiality} proves polynomiality for relative floor diagrams. 
We extract the logarithmic Gromov--Witten polynomials in Section~\ref{sec:ordinary-polynomiality}, 
where we also establish the ordinary/logarithmic comparison, 
derive ordinary polynomiality and the Laurent-polynomial result, 
and conclude with the connected asymptotic corollary.

\subsection*{AI disclosure}
AI tools assisted with the combinatorial computations involved in establishing the polynomiality of the refined floor diagrams at an early stage, and with the final proofreading of the manuscript.

\section{Floor diagrams, refined relative Severi degrees and correspondence theorems}\label{sec:diagrams}

In this section, we first briefly review marked floor diagrams and the refined relative Severi degrees, then we introduce the combined invariants.

\subsection{Floor diagrams}\label{sec:floor-diagrams}

Floor diagrams were introduced by Brugall\'e and Mikhalkin in \cite{bm2007,bm2008} and then generalized to the relative case by Fomin and Mikhalkin in \cite{FM-2010}.
We use relative floor diagrams, and our notation follows closely \cite{AB-2013,Block-2012,bg2016,FM-2010}. 
We use partitions to represent the weights of the tangency conditions as in \cite{FM-2010}.

\begin{definition}[{\cite[Definition 2.2]{AB-2013}}]\label{def:Delta-floor diagram}
    Let $\Delta(\cf,\df)$ be an $h$-transverse polygon with height $M$. A \textit{$\Delta$-floor diagram} $\dl$ consists of the following data:
    \begin{enumerate}
        \item a directed multigraph on the vertex set $\{1,\ldots,M\}$, with positive integral edge weights $w(e)$ and edges directed $i\to j$ for $i<j$;
        \item two permutations $\lf=(l_1,\ldots,l_M)$ and $\rf=(r_1,\ldots,r_M)$ of the multisets $d_l(\Delta(\cf,\df))$ and $d_r(\Delta(\cf,\df))$ of the left and right directions of $\Delta(\cf,\df)$, respectively, and a sequence $\sff=(s_1,\ldots,s_M)$ of non-negative integers such that $s_1+\cdots+s_M=d_t$;

        \noindent subject to the divergence condition: for each vertex $j$, we have
        $$
        \di(j):=\sum_{\substack{\text{edges }e\\j\xrightarrow{e}k}} w(e)-\sum_{\substack{\text{edges }e\\i\xrightarrow{e}j}} w(e)\leq r_j-l_j+s_j,
        $$
        where $w(e)$ is the weight of $e$.
    \end{enumerate}
\end{definition}

We call the $\Delta$-floor diagram $\dl$ an $(\lf,\rf,\sff)$-floor diagram and call $\Delta$ the \textit{degree} of the floor diagram.

\begin{example}\label{exap:floor-diagram-weighted}
    Figure \ref{fig:htrans_normalfan} shows an $h$-transverse polygon and its normal fan. Its data are
    $$
    d_r(\Delta)=\{1,1,1,0\},\qquad d_l(\Delta)=\{0,0,0,0\},\qquad d_b=4,\quad d_t=1.
    $$
    \begin{figure}[htbp]
  \centering
  \begin{minipage}{0.48\textwidth}
  \centering
  \begin{tikzpicture}[scale=0.6, every node/.style={font=\small,inner sep=0.5pt}]
    \coordinate (P0)  at (0.00, 0.00);
    \coordinate (P1)  at (1.00, 0.00);
    \coordinate (P2)  at (2.00, 0.00);
    \coordinate (P3)  at (3.00, 0.00);
    \coordinate (P4)  at (4.00, 0.00);
    \coordinate (P5)  at (4.00, 1.00);
    \coordinate (P6)  at (3.00, 2.00);
    \coordinate (P7)  at (2.00, 3.00);
    \coordinate (P8)  at (1.00, 4.00);
    \coordinate (P9)  at (0.00, 4.00);
    \coordinate (P10) at (0.00, 3.00);
    \coordinate (P11)  at (0.00, 2.00);
    \coordinate (P12)  at (0.00, 1.00);
    \draw[thick] (P0)--(P1)--(P2)--(P3)--(P4)--(P5)--(P6)--(P7)--(P8)--(P9)--(P10)--(P11)--(P12)--cycle;
    \foreach \v in {P0,P1,P2,P3,P4,P5,P6,P7,P8,P9,P10,P11,P12}{
      \fill (\v) circle (2.2pt);
    }
  \end{tikzpicture}
  \end{minipage}
  \hfill
  \begin{minipage}{0.48\textwidth}
  \centering
  \begin{tikzpicture}[scale=0.8]
    \fill (0,0) circle (1.5pt);
    \draw (0,0) -- (2.0, 0.0);
    \draw (0,0) -- (1.4, 1.4);
    \draw (0,0) -- (0.0, 2.0);
    \draw (0,0) -- (-2.0, 0.0);
    \draw (0,0) -- (0.0, -2.0);
  \end{tikzpicture}
  \end{minipage}
  \caption{An $h$-transverse polygon and its normal fan.}
  \label{fig:htrans_normalfan}
\end{figure}
For the polygon in Figure \ref{fig:htrans_normalfan}, take
$\lf=(0,0,0,0)$, $\rf=(1,1,0,1)$, $\sff=(0,1,0,0)$.
An $(\lf,\rf,\sff)$-floor diagram is depicted in Figure \ref{fig:floor-diagram}. It has edges $1\to2$, $2\to3$, $3\to4$, and $2\to4$, with weights $1,2,1,1$, respectively.
Its divergences are $(1,2,-1,-2)$. 
Unlabelled edges in the figure have weight one.
\begin{figure}[htbp]
\centering
\begin{tikzpicture}[x=1.8cm,y=0.7cm,
floor/.style={circle,fill,inner sep=2pt},
edge/.style={-{Stealth[length=4pt]}}]
\foreach \i in {1,...,4} {\node[floor,label=below:{$\i$}] (v\i) at (\i,0) {};}
\draw[edge] (v1)--(v2);
\draw[edge] (v2)--node[below] {$2$}(v3);
\draw[edge] (v3)--(v4);
\draw[edge] (v2) to[out=45,in=135] (v4);
\end{tikzpicture}
\caption{A $\Delta$-floor diagram.}
\label{fig:floor-diagram}
\end{figure}
\end{example}

A floor diagram is \textit{connected} if its underlying graph is connected. The \textit{genus} $g_\dl$ of a connected floor diagram $\dl$ is the genus of the underlying graph, \textit{i.e.}, the first Betti number of the underlying graph. The \textit{cogenus} $\delta(\dl)$ of a connected floor diagram $\dl$ is equal to $|\Delta^\circ\cap\zb^2|-g_\dl$, where $\Delta^\circ$ is the interior of $\Delta$. We denote by $\fdf_c(\Delta,g)$ the set of connected $\Delta$-floor diagrams of genus $g$.

When $\dl$ is not connected, there exist finitely many $h$-transverse lattice polygons $\Delta_1$, $\Delta_2$, $\ldots,\Delta_t$,
possibly including primitive positive length lattice segments, with Minkowski sum $\Delta_1+\Delta_2+\cdots+\Delta_t=\Delta$, up to lattice translation, such that the polygons $\Delta_i$ are degrees of the connected components of $\dl$.
Note that $t$ is not fixed.
Let $g_1,\ldots,g_t$ and $\delta_1,\ldots,\delta_t$ be the genera and cogenera of the connected components. The genus of $\dl$ is $g_\dl=\sum_ig_i+1-t$, and the cogenus is defined as
$$
\delta_\dl=\sum_i\delta_i+\sum_{i<j}\ml(\Delta_i,\Delta_j),
$$
where $\ml(\Delta_i,\Delta_j):=\frac{1}{2}(\area(\Delta_i+\Delta_j)-\area(\Delta_i)-\area(\Delta_j))$ is the mixed area of $\Delta_i$ and $\Delta_j$. 
Note that the normalized area $\area(\Delta_i)$ is given by twice the Euclidean area of $\Delta_i$ in $\rb^2$.
From Bernstein's theorem \cite{bernstein-1975}, one obtains $|\Delta^\circ\cap\zb^2|-g_\dl=\delta_\dl$ for a disconnected floor diagram $\dl$. 
Let $\fdf(\Delta,g)$ denote the set of $\Delta$-floor diagrams of genus $g$.

For $w\geq1$, the quantum integer is
$$
[w]_q=\frac{q^{w/2}-q^{-w/2}}{q^{1/2}-q^{-1/2}}
=q^{(w-1)/2}+q^{(w-3)/2}+\cdots+q^{-(w-1)/2};
$$
we set $[0]_q=0$ and $[w]_1=w$.

\begin{definition}[{\cite[Definition 5.2]{bg2016}}]\label{def:mult_q}
The \textit{refined multiplicity} $\mult_q(\dl)$ of a floor diagram $\dl$ is defined as
$$
\mult_q(\dl)=\prod_{\text{edges }e}([w(e)]_q)^2.
$$
\end{definition}

Now we introduce the definition of relative markings of floor diagrams. Our notation is a modification of that in \cite{Block-2012,bg2016,FM-2010}.
In accordance with the notation for relative Gromov–Witten invariants, we define relative markings via partitions as in \cite{FM-2010}.

\begin{definition}[{\cite[Definition 4.5]{bg16}}]\label{def:relative-marking}
    Let $\dl$ be a $\Delta$-floor diagram, where $\Delta=\Delta(\cf,\df)$ is an $h$-transverse polygon. Let $\eta,\mu$ be two partitions with $|\eta|+|\mu|=d_b$. An \textit{$(\eta,\mu)$-marking} of $\dl$ is defined by the following four step process.
    \begin{itemize}
        \item[{\bf Step 1:}] Fix a pair of collections of partitions $(\{\eta^i\},\{\mu^i\})_{i=1}^M$, where $M$ is the height of the polygon $\Delta$, such that:
        \begin{enumerate}
            \item $\eta=\cup_i\eta^i$ and $\mu=\cup_i\mu^i$;
            \item for any vertex $j$ of $\dl$, we have $|\eta^j|+|\mu^j|=r_j-l_j+s_j-\di(j)$.
        \end{enumerate}
        The second condition requires that the relative degree $|\eta^j|+|\mu^j|$ 
        be compatible with the divergence at the vertex $j$. 
        Such a pair $(\{\eta^i\},\{\mu^i\})_{i=1}^M$ is called \textit{compatible} with $\dl$ and $(\eta,\mu)$.
        \item[{\bf Step 2:}] For each vertex $j$ of $\dl$, create $s_j$ new indistinguishable vertices and connect them to $j$ with a new edge of weight one directed towards $j$. For each vertex $j$ of $\dl$, create $l(\eta^j)$ (resp. $l(\mu^j)$) indistinguishable vertices, called the \textit{$\eta$-vertices} (resp. \textit{$\mu$-vertices}), and connect them to $j$ with a new edge of weight $\eta^j_k$ (resp. $\mu_k^j$) directed away from $j$, with $1\leq k\leq l(\eta^j)$ (resp. $1\leq k\leq l(\mu^j)$).
        This makes the divergence of vertex $j$ in the extended graph equal to $r_j-l_j$ for $1\leq j\leq M$. See Figure \ref{fig:marking-step2} for the illustration in the case of Example \ref{exap:floor-diagram-weighted}.
        \item[{\bf Step 3:}] Subdivide each edge of the original floor diagram $\dl$ into two directed edges by introducing a new vertex for each edge. The new edges inherit their weights and orientations (see Figure \ref{fig:marking-step3}). We use $\wt\dl$ to denote the resulting graph.
        \item[{\bf Step 4:}] Extend the linear ordering on the vertices of $\dl$ to the set of vertices of $\wt\dl$ such that each edge in $\wt\dl$ is directed from a smaller vertex to a larger vertex. We also require that the $\eta$-vertices are largest among all vertices, and for every pair of $\eta$-vertices $i'>i$, the weight of the $i'$-adjacent edge is larger than or equal to the weight of the $i$-adjacent edge. See Figure \ref{fig:marking-step4} for an example.
    \end{itemize}
    The extended graph $\wt\dl$ together with the linear order on its vertices is called an \textit{$(\eta,\mu)$-marked floor diagram} or an \textit{$(\eta,\mu)$-marking} of the floor diagram $\dl$.
\end{definition}

\begin{example}
For the $\Delta$-floor diagram in Example \ref{exap:floor-diagram-weighted}, choose $\eta^3=(1)$ and $\mu^4=(1,2)$, with all other profiles empty.
Thus $\eta=(1)$ and $\mu=(1,2)$.
Figures \ref{fig:marking-step2}--\ref{fig:marking-step4} illustrate the marking construction.
\begin{figure}[htbp]
\centering
\begin{tikzpicture}[x=1.3cm,y=0.6cm,
floor/.style={circle,fill,inner sep=2pt},
added vertex/.style={circle,draw,fill=white,inner sep=1.8pt},
edge/.style={-{Stealth[length=4pt]}}]
\foreach \i in {1,...,4} {\node[floor,label=below:{$\i$}] (v\i) at (1.2*\i,0) {};}
\node[added vertex,label=below:{$t$}] (t) at (1.6,-1) {};
\node[added vertex,label=right:{$a$}] (a) at (5.8,1.5) {};
\node[added vertex,label=right:{$b$}] (b) at (5.8,0.4) {};
\node[added vertex,label=right:{$c$}] (c) at (5.8,-1) {};
\draw[edge] (v1)--(v2);
\draw[edge] (v2)--node[below] {$2$}(v3);
\draw[edge] (v3)--(v4);
\draw[edge] (v2) to[out=65,in=135] (v4);
\draw[edge] (t)--(v2);
\draw[edge] (v3)--(a);
\draw[edge] (v4)--(b);
\draw[edge] (v4)--node[below] {$2$}(c);
\end{tikzpicture}
\caption{Steps 1--2: attaching the top and relative contacts.}
\label{fig:marking-step2}
\end{figure}

\begin{figure}[htbp]
\centering
\begin{tikzpicture}[x=1.3cm,y=0.6cm,
floor/.style={circle,fill,inner sep=2pt},
added vertex/.style={circle,draw,fill=white,inner sep=1.8pt},
edge/.style={-{Stealth[length=4pt]}}]
\foreach \i in {1,...,4} {\node[floor,label=below:{$\i$}] (v\i) at (1.2*\i,0) {};}
\node[added vertex,label=below:{$t$}] (t) at (1.6,-1) {};
\node[added vertex,label=right:{$a$}] (a) at (5.8,1.5) {};
\node[added vertex,label=right:{$b$}] (b) at (5.8,0.4) {};
\node[added vertex,label=right:{$c$}] (c) at (5.8,-1) {};
\node[added vertex] (x12) at (1.8,0) {};
\node[added vertex] (x23) at (3,0) {};
\node[added vertex] (x34) at (4.2,0) {};
\node[added vertex] (x24) at (3.6,1.35) {};
\draw[edge] (v1)--(x12); \draw[edge] (x12)--(v2);
\draw[edge] (v2)--node[below] {$2$}(x23); \draw[edge] (x23)--node[below] {$2$}(v3);
\draw[edge] (v3)--(x34); \draw[edge] (x34)--(v4);
\draw[edge] (v2) to[out=65,in=180] (x24); \draw[edge] (x24) to[out=0,in=135] (v4);
\draw[edge] (t)--(v2); \draw[edge] (v3)--(a);
\draw[edge] (v4)--(b); \draw[edge] (v4)--node[below] {$2$}(c);
\end{tikzpicture}
\caption{Step 3: subdividing all four original edges.}
\label{fig:marking-step3}
\end{figure}

\begin{figure}[htbp]
\centering
\begin{tikzpicture}[x=0.75cm,y=0.65cm,
floor/.style={circle,fill,inner sep=2pt},
added vertex/.style={circle,draw,fill=white,inner sep=1.6pt},
edge/.style={-{Stealth[length=3pt]}}]
\foreach \i in {1,...,12} {\node[added vertex,label=below:{\scriptsize $\i$}] (p\i) at (\i,0) {};}
\foreach \i in {1,4,7,9} {\node[floor] at (p\i) {};}
\draw[edge] (p1)--(p2);
\draw[edge] (p2) to[out=55,in=125] (p4);
\draw[edge] (p3)--(p4);
\draw[edge] (p4)--node[above] {\scriptsize $2$}(p5);
\draw[edge] (p5) to[out=50,in=130] node[above] {\scriptsize $2$}(p7);
\draw[edge] (p4) to[out=80,in=100] (p6);
\draw[edge] (p6) to[out=75,in=105] (p9);
\draw[edge] (p7)--(p8); \draw[edge] (p8)--(p9);
\draw[edge] (p9)--(p10);
\draw[edge] (p9) to[out=55,in=125] node[above] {\scriptsize $2$}(p11);
\draw[edge] (p7) to[out=-50,in=-130] (p12);
\end{tikzpicture}
\caption{Step 4: a compatible total order. The original floors occupy positions $1,4,7,9$; the fixed contact is the last vertex, at position $12$.}
\label{fig:marking-step4}
\end{figure}
\end{example}

Two $(\eta,\mu)$-markings $\wt\dl_1,\wt\dl_2$ of a floor diagram $\dl$ are \textit{equivalent} if there exists an isomorphism of weighted directed graphs $\varphi:\wt\dl_1\to\wt\dl_2$ fixing the vertices of $\dl$ and preserving the linear order and the types of the added vertices.
The \textit{number of markings $\nu_{\eta,\mu}(\dl)$} is the number of $(\eta,\mu)$-markings of $\dl$, up to equivalence.

The \textit{refined $(\eta,\mu)$-multiplicity} of a $\Delta$-floor diagram $\dl$ is defined as
$$
\mult_q^{\eta,\mu}(\dl)=\prod_{i=1}^{l(\mu)}[\mu_i]_q\mult_q(\dl).
$$

\begin{definition}\label{def:refined-count}
    The \textit{count with refined $(\eta,\mu)$-multiplicity} of $\Delta$-floor diagrams of genus $g$ is
    $$
        N_{\floor}^{\Delta,g}(\eta,\mu;q):=\sum_{\dl\in\fdf(\Delta,g)}\mult_q^{\eta,\mu}(\dl)\cdot\nu_{\eta,\mu}(\dl).
    $$
    The \textit{count with refined $(\eta,\mu)$-multiplicity} of connected $\Delta$-floor diagrams of genus $g$ is defined as
    $$
    N_{c,\floor}^{\Delta,g}(\eta,\mu;q):=\sum_{\dl\in\fdf_c(\Delta,g)}\mult_q^{\eta,\mu}(\dl)\cdot\nu_{\eta,\mu}(\dl).
    $$
\end{definition}

Let $\dl$ be a $\Delta$-floor diagram with $M$ vertices and $E$ edges, and let $\dl_1,\ldots,\dl_t$ be its connected components whose degrees are $\Delta_1,\ldots,\Delta_t$, respectively. 
The polygons $\Delta_i$ are determined by $(\lf_i,\rf_i,\sff_i)$,
where $\lf_i,\rf_i,\sff_i$ are the sequences determined by $\dl_i$.
Note that $\Delta$, $\Delta_1,\ldots,\Delta_t$ are $h$-transverse lattice polygons.
Denote by $M_i$, $E_i$, and $g_i$ the number of vertices, the number of edges, and the genus of $\dl_i$, respectively.
Since $g_i=E_i-M_i+1$, we have $g=\sum_{i=1}^{t}g_i+1-t=E-M+1$.
The marking construction adds $E$ subdivision vertices, $d_t$ top vertices, and $l(\mu)$ free bottom vertices to $\dl$.
Thus the number of nonfixed marking positions is
$$
n:=M+E+d_t+l(\mu)=2M+d_t+l(\mu)+g-1.
$$
Label the nonfixed marking positions by $\Lambda=\{1,\ldots,n\}$.
Choose a set $\varPi$ of $l(\eta)$ distinct points equipped with a weight map $\omega:\varPi\to\zb_{>0}$ whose multiset of values is $\eta$.
For $I\subseteq\varPi$, denote the partition of weights on $I$ by $\eta(I)$.
Restricting the total order of a marked diagram to each connected component gives a nonempty collection $\Lambda_i$ of nonfixed positions. 
Its fixed contact positions form a subset $\varPi_i$, with profile $\eta_i=\eta(\varPi_i)$, and its free bottom ends determine a profile $\mu_i$.
The component data recover the total degree and contact profiles by
$\Delta=\Delta_1+\cdots+\Delta_t, ~
\eta=\bigsqcup_{i=1}^{t}\eta_i,~
\mu=\bigsqcup_{i=1}^{t}\mu_i$.
The same marking count on $\dl_i$ gives
$|\Lambda_i|=2M_i+d_{t,i}+l(\mu_i)+g_i-1$.
Conversely, connected marked diagrams with such data determine a marked diagram of their disjoint union.
The refined multiplicity separates over the components as follows:
$$
\mult_q^{\eta,\mu}(\dl)
=\prod_{i=1}^{t}\mult_q^{\eta_i,\mu_i}(\dl_i).
$$

\begin{remark}\label{rem:floor-split}
With the preceding notation,
$$
\begin{aligned}
N_{\floor}^{\Delta,g}(\eta,\mu;q)
={}&\sum_{\Lambda=\bigsqcup_{i=1}^{t}\Lambda_i}
    \ \sum_{\varPi=\bigsqcup_{i=1}^{t}\varPi_i}
    \ \sum_{(\Delta_i,g_i,\mu_i)}
    \prod_{i=1}^{t}
    N_{c,\floor}^{\Delta_i,g_i}(\eta_i,\mu_i;q),
\end{aligned}
$$
where $\eta_i=\eta(\varPi_i)$.
The first sum is over unordered partitions of $\Lambda$ into nonempty collections, with $t$ varying. 
The second sum is taken over disjoint unions of $\varPi$, allowing $\varPi_i=\varnothing$.
The last sum is over component degrees, genera $g_i\geq0$, and free profiles $\mu_i$ satisfying
$$
\begin{aligned}
\Delta&=\Delta_1+\cdots+\Delta_t,~
\mu=\bigsqcup_{i=1}^{t}\mu_i,~
|\eta_i|+|\mu_i|=d_{b,i},~
g-1=\sum_{i=1}^{t}(g_i-1)\\
|\Lambda_i|&=2M_i+d_{t,i}+l(\mu_i)+g_i-1, \text{ for }
1\leq i\leq t.
\end{aligned}
$$
Here $M_i>0$ and $d_{b,i},d_{t,i}$ are the height and the bottom and top lengths of $\Delta_i$.
\end{remark}

\subsection{Refined tropical relative Severi degrees}

We use the definitions of nodal or simple parametrized tropical curves from \cite[Section 3]{bg2016} directly, and we refer the reader to that section for more details.
We use partitions to describe the contact orders.

Let $\Delta$ be a lattice polygon with a horizontal bottom edge of lattice length $d_b>0$.
Let $\eta,\mu$ be partitions satisfying $|\eta|+|\mu|=d_b$.
Fix an integer $g$ such that $|\Delta^\circ\cap\zb^2|-g\geq0$, and put
$n=|\partial\Delta\cap\zb^2|+g-1-d_b+l(\mu)$.
We assume $n\geq0$.
Choose a tropical boundary line $L_b$, parallel to the bottom edge of $\Delta$ and sufficiently far below.
Choose $n$ generic points $\Pi$ above $L_b$ and $l(\eta)$ distinct prescribed points
$\Pi_\eta=\{p_1,\ldots,p_{l(\eta)}\}\subset L_b$,
where $p_i$ is assigned contact order $\eta_i$.

\begin{definition}[{\cite[Definitions 7.1--7.2]{bg2016}}]
A tropical curve $C$ is \textit{$(\eta,\mu)$-relative} to $L_b$, with respect to $\Pi_\eta$, if its downward unbounded edges consist of the following:
\begin{enumerate}
    \item one edge of weight $\eta_i$ passing through $p_i$, for each $1\leq i\leq l(\eta)$;
    \item $l(\mu)$ edges whose weights form the partition $\mu$, with unspecified intersections with $L_b$.
\end{enumerate}
All other unbounded edges have weight one.
The contacts specified by $\eta$ are called \textit{fixed contacts}, and those specified by $\mu$ are called \textit{free contacts}.
The free contacts are not additionally labeled.
\end{definition}

Let $e_1,e_2$ be any two edges adjacent to a $3$-valent vertex $v$ in $C$ with weights $w(e_1), w(e_2)$, respectively. 
Let $u_{v,e_1},u_{v,e_2}$ be the outgoing primitive directions of $e_1,e_2$ from $v$.
For the inner vertex $v$, define
$m_C(v)=\bigl|\det(w(e_1)u_{v,e_1},w(e_2)u_{v,e_2})\bigr|.$
Note that the balancing condition implies that this definition is independent of the chosen pair.
The \textit{refined $(\eta,\mu)$-multiplicity} of such a simple tropical curve is
$$
\mult_q^{\eta,\mu}(C)
=\prod_{i=1}^{l(\eta)}\frac{1}{[\eta_i]_q}\mult_q(C).
$$
The multiplicity is defined by $\mult_q(C)=\prod_v[m_C(v)]_q$, where the product runs over its finite trivalent vertices.

\begin{definition}[{\cite[Definition 7.2]{bg2016}}]\label{def:trop-count}
For the chosen configurations $(\Pi,\Pi_\eta)$, 
let $\mathcal T(\Delta,g;\eta,\mu)$ denote the set of 
isomorphism classes of nodal parametrized tropical curves of degree $\Delta$ 
and genus $g$, passing through $\Pi$ and satisfying the prescribed relative conditions.
Let $\mathcal T_c(\Delta,g;\eta,\mu)$ be the subset consisting of irreducible curves.
If the configurations $(\Pi,\Pi_\eta)$ are tropically generic, these sets are finite and their curves are simple.
The \textit{tropical refined relative Severi degree} is
$$
N_{\mathrm{trop}}^{\Delta,g}(\eta,\mu;q)
:=\sum_{C\in\mathcal T(\Delta,g;\eta,\mu)}\mult_q^{\eta,\mu}(C).
$$
The \textit{irreducible tropical refined relative Severi degree} is
$$
N_{c,\mathrm{trop}}^{\Delta,g}(\eta,\mu;q)
:=\sum_{C\in\mathcal T_c(\Delta,g;\eta,\mu)}\mult_q^{\eta,\mu}(C).
$$
By \cite[Theorem 7.3]{bg2016}, these counts are independent of the generic configuration.
\end{definition}

For an $h$-transverse polygon $\Delta=\Delta(\cf,\df)$ of height $M$, we have $|\partial\Delta\cap\zb^2|=2M+d_t+d_b$.
Consequently, $n=2M+d_t+g-1+l(\mu)$,
which agrees with the number of nonfixed marking positions of a $\Delta$-floor diagram.

\begin{theorem}[{\cite[Proposition 5.9]{bm2008}, \cite[Theorem 4.6]{bg16}}]
\label{thm:floor-trop}
Let $\Delta=\Delta(\cf,\df)$ have height $M>0$ and $d_b>0$, and let $|\eta|+|\mu|=d_b$.
Let $0\leq g\leq |\Delta^\circ\cap\zb^2|$, and put $\delta=|\Delta^\circ\cap\zb^2|-g$ and $n=2M+d_t+g-1+l(\mu)\geq0$.
Then, we have
$$
N_{c,\floor}^{\Delta,g}(\eta,\mu;q)
=N_{c,\mathrm{trop}}^{\Delta,g}(\eta,\mu;q),
$$
and, if $\delta<M$,
$$
N_{\floor}^{\Delta,g}(\eta,\mu;q)
=N_{\mathrm{trop}}^{\Delta,g}(\eta,\mu;q).
$$
\end{theorem}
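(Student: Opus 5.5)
The plan is to follow Brugall\'e--Mikhalkin's floor decomposition, as refined by Block--G\"ottsche, first in the connected case and then in the disconnected case. Choose the configuration $\Pi$ of $n=2M+d_t+g-1+l(\mu)$ points in a vertically stretched position: the points lie on a line of almost vertical irrational slope, with vertical gaps growing very fast, and the points $\Pi_\eta$ lie on the far line $L_b$. For such a configuration, every curve in $\mathcal T(\Delta,g;\eta,\mu)$ is \emph{floor decomposed}. Each elevator (vertical edge) and each floor (connected component after deleting the elevators) contains exactly one point of $\Pi$. Because $\Delta$ is $h$-transverse, each floor is a chain of edges with one left end of direction $l_j$, one right end of direction $r_j$, and some number $s_j$ of upward ends of weight one. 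The standard dimension count from \cite{bm2008} shows that, for generic stretched points, each floor is rational with exactly $2$ points fixed on it. The count works because the number of conditions equals $n$ exactly.

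From such a curve $C$ we read off a $\Delta$-floor diagram and an $(\eta,\mu)$-marking. The floors become the vertices, ordered by height. The bounded elevators become the weighted edges, and the divergence condition comes from balancing. The top ends of a floor give $s_j$, the downward ends of weights $\eta^j,\mu^j$ give the compatible partitions, and the positions of the marked points give the linear order of Step 4. Conversely, an $(\eta,\mu)$-marked diagram determines exactly one tropical curve through $\Pi$. The floor positions are forced by the two points on each floor, the elevator positions by their marked points, and the fixed contacts by $\Pi_\eta$. The ordering convention on $\eta$-vertices removes the ambiguity coming from permuting equal-weight fixed ends.

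Next we compare multiplicities. A vertex where an elevator of weight $w$ meets a floor edge of horizontal-primitive direction has $m_C(v)=w$. Each bounded elevator therefore contributes $[w]_q^2$, one factor from each endpoint. A downward end of weight $w$ contributes a single $[w]_q$. The free ends give $\prod[\mu_i]_q$, and the fixed ends give $\prod[\eta_i]_q$, which is cancelled by the factor $1/[\eta_i]_q$ in $\mult_q^{\eta,\mu}(C)$. Top ends and primitive floor vertices contribute $[1]_q=1$. This gives $\mult_q^{\eta,\mu}(C)=\mult_q^{\eta,\mu}(\dl)$. Summing over the bijection, and using that irreducible curves correspond to connected diagrams with genus matching the first Betti number, proves the connected identity. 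This holds without any restriction on $\delta$.

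The main obstacle is the disconnected identity. A reducible tropical curve through $\Pi$ may have components sharing a floor-like region, and a component might fail to be floor decomposed separately; more seriously, components can overlap in edges, so the correspondence between genus of the graph and cogenus could break. The hypothesis $\delta<M$ is used here. A disconnected diagram with components $\Delta_1,\dots,\Delta_t$ has cogenus $\sum\delta_i+\sum_{i<j}\ml(\Delta_i,\Delta_j)$. Each mixed area $\ml(\Delta_i,\Delta_j)\geq \min(M_i,M_j)\geq1$ for $h$-transverse pieces with positive heights, so every splitting has controlled cogenus. I would then argue that, with $\delta<M$, the floor decomposition applies componentwise. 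Each component, with its share $\Lambda_i$ of the points, is a generic floor-decomposed curve, and the components meet transversally, so the nodal curves through $\Pi$ biject with unions of connected marked diagrams. Combining the componentwise bijection with the splitting formula of Remark~\ref{rem:floor-split}, and the analogous splitting of $N_{\mathrm{trop}}$ into irreducible pieces, yields the second equality. Alternatively one could cite \cite[Theorem 4.6]{bg16} directly once the genus/cogenus conventions are matched, which is exactly where $\delta<M$ is used.
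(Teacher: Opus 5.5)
\textbf{Gap in the disconnected identity.} You never say what the hypothesis $\delta<M$ actually excludes. The step ``with $\delta<M$, the floor decomposition applies componentwise'' is asserted, not proved. You did flag that a component ``might fail to be floor decomposed,'' but you left this unresolved, and overlapping edges are not the real issue: for generic $\Pi$ the curves are simple and distinct components meet transversally.

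The real obstruction is a summand of height zero. Suppose $\Delta_1$ is a horizontal unit segment. Its tropical curve is a weight-one vertical line running from a top end to a bottom contact. Such a reducible curve is counted in $N_{\mathrm{trop}}^{\Delta,g}$, but the vertical-line component contains no floor. Every connected floor diagram has at least one vertex, so this curve has no counterpart in $N_{\floor}^{\Delta,g}$.

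Your mixed-area remark does not rule this out. It explicitly assumes that all pieces have positive height, which is exactly what must be shown. It is also false as stated: two parallel primitive segments of direction $(c,-1)$ are admissible positive-height summands with mixed area $0$.

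The missing computation is
$\ml(\Delta_1,\Delta_i)=\tfrac12\bigl(\area(\Delta_1+\Delta_i)-\area(\Delta_i)\bigr)=M_i$
for a horizontal unit segment $\Delta_1$. Since $\delta=\sum_i\delta_i+\sum_{i<j}\ml(\Delta_i,\Delta_j)$, this gives $\delta\geq\sum_{i\geq2}M_i=M$, contradicting $\delta<M$. Once every component is known to have positive height, each is floor decomposable; primitive segment summands are single floors of multiplicity one. The second identity then follows from the connected one by splitting $\Lambda$ and $\varPi$ as in Remark~\ref{rem:floor-split}. This is precisely the argument the paper adds on top of citing \cite[Theorem 4.6]{bg16}. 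Your alternative of ``matching genus/cogenus conventions'' misplaces where the hypothesis enters.

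\textbf{Connected identity.} This part is the standard argument the paper cites, and your multiplicity bookkeeping is correct: bounded elevators give $[w]_q^2$, fixed ends cancel against $1/[\eta_i]_q$, and free ends give $[\mu_i]_q$.

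There is one slip. A floor carries exactly one point of $\Pi$, not two. Once the horizontal positions of its attached elevators are fixed, a floor has a single vertical-translation parameter. Indeed $n=M+E+d_t+l(\mu)$ with $E=M+g-1$, so the $2M$ in $n$ counts the $M$ floors plus the $M$ inside $E$. Your sentences about ``exactly $2$ points'' and ``the two points on each floor'' contradict your own correct earlier statement that each floor and each elevator contains one point.
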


\begin{proof}
The first equality follows from the floor-tropical correspondence \cite[Proposition 5.9]{bm2008} and the multiplicity computation in \cite[Proposition 7.7]{bg2016}. 
The second equality is given by \cite[Theorem 4.6]{bg16}.

Note that we allow the occurrence of segments in the Minkowski sum, so we first exclude the existence of a height zero component.
Let $\Delta=\Delta_1+\cdots+\Delta_t$ be the Minkowski sum. 
Suppose that there is a component with height zero, say $\Delta_1$. 
The tropical curve of degree $\Delta_1$ that corresponds to an irreducible component of a tropical curve in $\mathcal T(\Delta,g;\eta,\mu)$ is a vertical line of weight one. 
Let $M_i$ be the height of $\Delta_i$ for $1<i\leq t$, then $M=\sum_{i=2}^tM_i$. 
Note that $\area(\Delta_1+\Delta_i)-\area(\Delta_i)=2M_i$, 
so $\ml(\Delta_1,\Delta_i)=M_i$.
Then, we have
$$
\delta
=\sum_{i=1}^{t}\delta_i
 +\sum_{j=2}^{t}\ml(\Delta_1,\Delta_j)
 +\sum_{2\leq i<j\leq t}
      \ml(\Delta_i,\Delta_j)
=\sum_{i=2}^tM_i+\sum_{i=1}^{t}\delta_i
 +\sum_{2\leq i<j\leq t}
      \ml(\Delta_i,\Delta_j)
\geq M.
$$
This contradicts the hypothesis $\delta<M$.
Hence, every component of $\Delta$ has positive height and admits a floor decomposition, including primitive segment components, whose count and multiplicity are one. 
Decompose $\Lambda,\varPi$ as in Remark \ref{rem:floor-split},
then the second equality follows from the equality of the connected invariants.
\end{proof}

\subsection{Gromov-Witten invariants of smooth toric surfaces}
\label{sec:corr-gw}

Let $X_\cf$ be a smooth projective toric surface from an $h$-transverse polygon $\Delta:=\Delta(\cf,\df)$, and let $\beta\in H_2(X_\cf;\zb)$ be a curve class.
Let $\overline M_{g,n}(X_\cf,\beta)$ be
the moduli space of stable maps of genus $g$ and degree $\beta$ to $X_\cf$ with $n$ marked points.
From \cite{beh1997,li-tian-1998,fp97}, the moduli space $\overline M_{g,n}(X_\cf,\beta)$
is a proper and separated Deligne-Mumford stack.
The moduli space $\overline M_{g,n}(X_\cf,\beta)$ has a virtual fundamental class $[\overline M_{g,n}(X_\cf,\beta)]^{vir}$ \cite{li-tian-1998,BF-1997}.
Its virtual dimension is
$$
\vdim \overline M_{g,n}(X_\cf,\beta)=c_1(X_\cf)\cdot\beta+g-1+n,
$$
where $c_1(X_\cf)$ is the first Chern class of $X_\cf$.

Denote by $\pi:\cl\to\overline M_{g,n}(X_\cf,\beta)$ the universal curve,
and let $\omega_\pi$ be the relative dualizing sheaf of $\cl$.
The rank $g$ vector bundle $\eb_\pi:=\pi_*\omega_\pi$ on $\overline M_{g,n}(X_\cf,\beta)$ is called the Hodge bundle.
Lambda classes are the Chern classes of the Hodge bundle $\eb_\pi$: 
$
\lambda_{m,\pi}:=c_m(\eb_\pi)\in H^{2m}(\overline M_{g,n}(X_\cf,\beta);\qb), m=0,1,\ldots,g.
$
Let $ev_i:\overline M_{g,n}(X_\cf,\beta)\to X_\cf$ be the evaluation map,
and let $pt\in H^{*}(X_\cf;\qb)$ be the class Poincar\'e dual to a point in $X_\cf$. We put
$$
N_{g,\beta,n}^{\cf}(m):=
\int_{[\overline M_{g,n}(X_\cf,\beta)]^{vir}}(-1)^m\lambda_m\cdot\prod_{i=1}^n ev_i^*(pt).
$$
Note that $N_{g,\beta,n}^{\cf}(m)=0$, unless $n=c_1(X_\cf)\cdot\beta+g-m-1$. When $m=g$, we use $N_{g,\beta,n}^{\cf}$ to denote $N_{g,\beta,n}^{\cf}(g)$.

We introduce combined Gromov-Witten invariants as follows.
Let $\Delta$ be an $h$-transverse lattice polygon with smooth normal fan.
Decompose $\Delta$ into a Minkowski sum $\Delta=\Delta_1+\cdots+\Delta_t$ of $h$-transverse lattice polygons $\Delta_1,\ldots,\Delta_t$.
Then the normal fan of $\Delta$ is the common refinement of the normal fans of $\Delta_1,\ldots,\Delta_t$.
Hence, each polygon $\Delta_i$ defines a curve class $\beta_i\in H_2(X_\cf;\zb)$ such that $\beta_\Delta=\beta_1+\cdots+\beta_t$, where $\beta_\Delta=\beta_\cf(\df)$ is the class of the divisor $L_\Delta$.
Fix a set $\Lambda$ of $n=c_1(X_\cf)\cdot\beta+g-m-1$ generic points in the surface $X_\cf$.
Let $\Lambda_1,\ldots,\Lambda_t$ be a partition of $\Lambda$ into some number of nonempty subsets, and let $n_i=|\Lambda_i|$ for $i=1,\ldots,t$.
Define the combined Gromov-Witten invariants as
\begin{equation}\label{eq:comb-gw}
N_{g,\beta_\Delta,n}^{\cf,m}:=\sum_{\Lambda=\cup\Lambda_i}\sum_{(\Delta_i,g_i,m_i)}\prod_{i}N_{g_i,\beta_{i},n_i}^{\cf}(m_i),
\end{equation}
where the first sum is taken over all partitions of $\Lambda$ into nonempty subsets,
and the second sum is over all triples $(\Delta_i,g_i,m_i)$ which satisfy
\begin{equation}\label{eq:comb-gw-cond}
\begin{aligned}
&\Delta=\Delta_1+\cdots+\Delta_t,
g=\sum_{i=1}^tg_i+1-t,~
m=\sum_{i=1}^tm_i,\\
n_i=|\Lambda_i|=&c_1(X_\cf)\cdot\beta_i+g_i-m_i-1, ~0\leq m_i\leq g_i,~\forall i\in\{1,\ldots,t\},\\
\end{aligned}
\end{equation}
Note that the relation $\beta_\Delta=\beta_1+\cdots+\beta_t$ of classes is included in the Minkowski sum $\Delta=\Delta_1+\cdots+\Delta_t$.

\subsection{Logarithmic Gromov-Witten invariants and refined tropical counts}
\label{sec:corr-log-gw}

Logarithmic geometry was used to handle the problem of non-proper intersections in the study of curve counting theory. The theory of logarithmic stable maps has been developed in \cite{AbraChen-2014,Chen-2014,gs-2013}. 
In the following, we recall the definition of logarithmic Gromov-Witten invariants \cite{gs-2013} and the tropical refined correspondence \cite{bou2019}.

Let $X_\cf$ be the normal projective toric surface corresponding to an $h$-transverse lattice polygon $\Delta=\Delta(\cf,\df)$.
The toric boundary of $X_\cf$ is $\partial X_\cf=\bigcup_\rho D_\rho$.
We also use $D_b$ to denote the toric divisor corresponding to the bottom edge of $\Delta(\cf,\df)$.
Let $\beta_\Delta$ be the curve class of the divisor $L_\Delta$, and put $d_\rho=\beta_\Delta\cdot D_\rho$.
We write $c_1(X_\cf)=-K_{X_\cf}=\sum_\rho D_\rho$ for the anticanonical divisor class,
so $c_1(X_\cf)\cdot\beta_\Delta=\sum_\rho d_\rho$.
Equip $X_\cf$ with the divisorial logarithmic structure determined by $\partial X_\cf$.

Let $\eta,\mu$ be two partitions such that $|\eta|+|\mu|=d_b>0$.
Along $D_b$ we impose the fixed contacts $\eta$ and the free contacts $\mu$; along every other $D_\rho$, we impose $d_\rho$ free simple contacts.
The total number of boundary marked points is
$$
\begin{aligned}
\ell
=\sum_{\rho\ne b}d_\rho+l(\eta)+l(\mu)
=|\partial\Delta\cap\zb^2|-d_b+l(\eta)+l(\mu)
 =c_1(X_\cf)\cdot\beta_\Delta-d_b+l(\eta)+l(\mu).
\end{aligned}
$$
Let $\overline M^{\log}_{g,n}
(X_\cf|\partial X_\cf,\beta_\Delta,\vec\eta,\vec\mu)$
be the moduli space of basic stable logarithmic maps of genus $g$ and  class $\beta_\Delta$ to $X_\cf$ with $n$ ordinary marked points and $\ell$ relative marked points contacting the boundary $\partial X_\cf$ as specified above.
The moduli space $\overline M^{\log}_{g,n}
(X_\cf|\partial X_\cf,\beta_\Delta,\vec\eta,\vec\mu)$ is a proper Deligne-Mumford stack with a virtual fundamental class \cite{gs-2013}.
Its virtual dimension is
$$
\vdim\overline M^{\log}_{g,n}
(X_\cf|\partial X_\cf,\beta_\Delta,\vec\eta,\vec\mu)
=g-1+n+\ell.
$$

Let $ev_i$ be the evaluation maps to $X_\cf$, and let $ev_j^{D_b}$ be the evaluation maps at the fixed boundary markings.
Let $pt$ be the class of a general point in the dense torus of $X_\cf$, and let $pt_{D_b}$ be the class of a general point  of $D_b$.
As in Section \ref{sec:corr-gw}, the universal curve of the moduli space $\overline M^{\log}_{g,n}
(X_\cf|\partial X_\cf,\beta_\Delta,\vec\eta,\vec\mu)$ defines a Hodge bundle $\eb$ of rank $g$, and we put $\lambda_m=c_m(\eb)$.
For $0\leq m\leq g$, the logarithmic Gromov-Witten invariant is defined as
$$
\begin{aligned}
N_{g,\beta_\Delta,n}^{X_\cf|\partial X_\cf}(m,\eta,\mu)
:=\frac{1}{|\aut(\mu)|\prod_{\rho\ne b}d_\rho!}
\int_{[\overline M^{\log}_{g,n}]^{vir}}
(-1)^m\lambda_m
\prod_{i=1}^n ev_i^*(pt)
\prod_{j=1}^{l(\eta)}(ev_j^{D_b})^*(pt_{D_b}),
\end{aligned}
$$
where $\overline M^{\log}_{g,n}:=\overline M^{\log}_{g,n}
(X_\cf|\partial X_\cf,\beta_\Delta,\vec\eta,\vec\mu)$.
The normalizing factor forgets the labels of the free contacts of equal order on each boundary divisor, with $|\aut(\mu)|=\prod_{w\geq1}m_w(\mu)!$.
Note that this invariant vanishes unless
$$
n=\ell-l(\eta)+g-m-1
 =c_1(X_\cf)\cdot\beta_\Delta-d_b+l(\mu)+g-m-1.
$$

\begin{theorem}[{\cite[Theorems 5--6]{bou2019}}]\label{thm:log-trop}
Let $X_\cf$ be the normal projective toric surface corresponding to an $h$-transverse lattice polygon $\Delta=\Delta(\cf,\df)$ with $d_b>0$.
Let $\eta,\mu$ be two partitions such that $|\eta|+|\mu|=d_b$.
Fix an integer $0\leq g'\leq|\Delta^\circ\cap\zb^2|$, and let
$n=c_1(X_\cf)\cdot\beta_\Delta-d_b+l(\mu)+g'-1\geq0$.
Then, we have the equality
$$
\begin{aligned}
\sum_{g\geq g'}&
N_{g,\beta_\Delta,n}^{X_\cf|\partial X_\cf}
(g-g',\eta,\mu)u^{2g-2+\ell}\\
&=
\left(\prod_{j=1}^{l(\eta)}\frac{[\eta_j]_q}{\eta_j}\right)
N_{c,\mathrm{trop}}^{\Delta,g'}(\eta,\mu;q)
\left((-i)(q^{1/2}-q^{-1/2})\right)^{2g'-2+\ell}
\end{aligned}
$$
of power series in $u$ with rational coefficients, where $q=e^{iu}$ and $\ell=|\partial\Delta\cap\zb^2|-d_b+l(\eta)+l(\mu)$.
\end{theorem}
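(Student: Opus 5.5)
The plan is to deduce the statement from Bousseau's tropical correspondence \cite[Theorems 5--6]{bou2019}, after translating between his conventions and ours. The work lies in three places: matching the geometric setup (normal toric surface, contact data, point insertions), matching the tropical side vertex by vertex, and matching the normalizing factors in front of the integrals.

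\emph{Geometric setup.} Bousseau's theorem is stated for a toric surface with its toric boundary, a degree given by a collection of unbounded directions, point conditions in the dense torus, and Lambda class insertions. The relevant moduli spaces are stable log maps relative to the full boundary, possibly with fixed contact points on a boundary divisor.
\begin{enumerate}
\item I would first check that our data fit this framework. The ends dual to the edges of $\Delta(\cf,\df)$ consist of $d_\rho$ ends of weight one in direction $\rho$ for $\rho\neq b$. The downward ends carry the profile $\eta\sqcup\mu$, and the $\eta$-ends are constrained by $pt_{D_b}$.
\item If $X_\cf$ is singular, I would choose a toric resolution, that is, a refinement of the normal fan. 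By the invariance of log Gromov--Witten invariants with primary insertions under log modifications \cite{aw2018}, the integrals are unchanged. Bousseau's gluing and degeneration arguments can therefore be run on a smooth toric model, and the tropical curves of degree $\Delta$ do not see this refinement.
\item The dimension constraint $n=c_1(X_\cf)\cdot\beta_\Delta-d_b+l(\mu)+g-m-1$ with $m=g-g'$ becomes exactly $n=|\partial\Delta\cap\zb^2|-d_b+l(\mu)+g'-1$. This is the number of point conditions in Definition~\ref{def:trop-count}, so only terms with $g\geq g'$ and $m=g-g'$ contribute.
\end{enumerate}

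\emph{Tropical side.} Bousseau's formula writes the generating series in $u$ as a sum over the rigid tropical curves through the configuration $(\Pi,\Pi_\eta)$. Each trivalent vertex of multiplicity $m_C(v)$ contributes
$$(-i)(q^{m_C(v)/2}-q^{-m_C(v)/2})=[m_C(v)]_q\,(-i)(q^{1/2}-q^{-1/2}),$$
and there are correction factors at the fixed contacts and the free ends.
\begin{enumerate}
\item Count vertices by an Euler characteristic argument. A simple tropical curve of genus $g'$ with $\ell$ unbounded ends and all point conditions in the interiors of edges has exactly $2g'-2+\ell$ trivalent vertices.
\item The product over vertices is then $\mult_q(C)\bigl((-i)(q^{1/2}-q^{-1/2})\bigr)^{2g'-2+\ell}$, which accounts for the power of $(-i)(q^{1/2}-q^{-1/2})$ on the right-hand side.
\item Bousseau's contribution of a fixed contact of weight $\eta_j$ is $1/\eta_j$. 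Combined with the factor $1/[\eta_j]_q$ built into $\mult_q^{\eta,\mu}(C)$, this yields the prefactor $\prod_j[\eta_j]_q/\eta_j$.
\item A free end contributes no extra factor once the ends are unlabeled.
\end{enumerate}
Summing over $C\in\mathcal T_c(\Delta,g';\eta,\mu)$ then gives $N_{c,\mathrm{trop}}^{\Delta,g'}(\eta,\mu;q)$. Only irreducible curves appear, because the log moduli space parametrizes maps from connected domains.

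\emph{Normalizing factors and main obstacle.} The step I expect to be the main obstacle is matching the labeling conventions. In the log moduli space all $\ell$ relative markings are labeled, while the tropical curves of Definition~\ref{def:trop-count} have unlabeled free ends.
\begin{enumerate}
\item For a fixed tropical curve, the labeled versions are permuted by $\prod_{\rho\neq b}d_\rho!\cdot|\aut(\mu)|$. This is exactly the factor we divide by, and I would verify that the group acts freely on labelings for a generic configuration.
\item The $\eta$-ends need no such factor, because they are distinguished by the distinct points $p_j$.
\item Finally, I would check that the exponent $2g-2+\ell$ of $u$ on the left-hand side is compatible with Bousseau's normalization. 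His series is indexed by the genus of the curve, and with the Lambda insertion of degree $g-g'$ it begins in order $u^{2g'-2+\ell}$. This matches the leading power of $\bigl(q^{1/2}-q^{-1/2}\bigr)^{2g'-2+\ell}\sim u^{2g'-2+\ell}$.
\end{enumerate}
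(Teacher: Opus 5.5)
Your proposal is correct and follows the same route as the paper, which gives no independent argument and simply imports Bousseau's Theorems 5--6. Your bookkeeping is exactly the translation of conventions that this citation relies on: the factor $1/\eta_j$ per fixed end combined with the $1/[\eta_j]_q$ built into $\mult_q^{\eta,\mu}$, the count of $2g'-2+\ell$ trivalent vertices, the labelling factor $\prod_{\rho\neq b}d_\rho!\,|\aut(\mu)|$, and Abramovich--Wise invariance for singular $X_\cf$.
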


We introduce a combined logarithmic invariant as follows.
Decompose the polygon $\Delta=\Delta(\cf,\df)$ into a finite Minkowski sum $\Delta=\Delta_1+\cdots+\Delta_t$. 
Each summand $\Delta_i$ defines a nef toric divisor on $X_\cf$, and we let $\beta_i$ be its associated curve class.
Then we have $\beta_\Delta=\beta_1+\cdots+\beta_t$.
Let $d_{\rho,i}=\beta_i\cdot D_\rho$. When $\eta_i=\mu_i=\varnothing$, we allow $d_{b,i}=0$.

The normal fan of a two-dimensional $\Delta_i$ may be coarser than that of $X_\cf$. We use its full {boundary} logarithmic invariant on $X_\cf$. 
Applying the connected correspondence on a common toric refinement gives the same invariant by logarithmic birational invariance \cite{aw2018}. 
For segment summands, we use the collinear balanced degree as in \cite[Section 2.1, footnote 4]{bou2019}, and the same refinement argument. 
Label the ordinary point conditions by $\Lambda=\{1,\ldots,n\}$.
Let $\varPi$ be a disjoint set of $l(\eta)$ fixed-contact labels, with
weight map $\omega:\varPi\to\zb_{>0}$ having multiset of values $\eta$.

\begin{definition}\label{def:comb-log-gw}
For $g\in\zb$ and $m,n\geq0$, define the combined logarithmic invariant by
\[
N_{g,\beta_\Delta,n}^{X_\cf|\partial X_\cf,m}(\eta,\mu)
=\sum_{\Lambda=\bigsqcup_{i=1}^{t}\Lambda_i}
    \ \sum_{\varPi=\bigsqcup_{i=1}^{t}\varPi_i}
\ \sum_{(\Delta_i,g_i,m_i,\mu_i)}
\prod_{i=1}^{t}
N_{g_i,\beta_i,n_i}^{X_\cf|\partial X_\cf}(m_i,\eta_i,\mu_i).
\]
The first sum is over partitions of $\Lambda$ into nonempty subsets, with
$t$ varying. The second sum is taken {over disjoint unions of} $\varPi$, allowing $\varPi_i=\emptyset$. Let $n_i=|\Lambda_i|$, {$\eta_i=\eta(\varPi_i)$, which} is the partition of weights on $\varPi_i$.
The last sum is over nonzero component polygon degrees, and integers $g_i\geq0$, $0\leq m_i\leq g_i$,
together with free-contact partitions $\mu_i$, such that
\[
\begin{gathered}
\Delta=\Delta_1+\cdots+\Delta_t,\qquad
\mu=\bigsqcup_{i=1}^{t}\mu_i,\qquad
g-1=\sum_{i=1}^{t}(g_i-1),\qquad m=\sum_{i=1}^{t}m_i,\\
|\eta_i|+|\mu_i|=\beta_i\cdot D_b,\qquad
n_i=c_1(X_\cf)\cdot\beta_i-\beta_i\cdot D_b
+l(\mu_i)+g_i-m_i-1.
\end{gathered}
\]
For $(\eta,\mu)=(\emptyset,1^{d_b})$, we use $N_{g,\beta_\Delta,n}^{X_\cf|\partial X_\cf,m}$ {as an abbreviation for} $N_{g,\beta_\Delta,n}^{X_\cf|\partial X_\cf,m}(\eta,\mu)$.
\end{definition}

The argument in the proof of Theorem \ref{thm:floor-trop} shows that, 
for $\delta=|\Delta^\circ\cap\zb^2|-g'<M$, 
every contributing tropical component has positive height and at least one ordinary point condition. 
The admissible components in the definition of $N_{g,\beta_\Delta,n}^{X_\cf|\partial X_\cf,m}(\eta,\mu)$
and $N_{\mathrm{trop}}^{\Delta,g'}(\eta,\mu;q)$ coincide: 
for a summand in that definition, put $g_i'=g_i-m_i\geq0$ and 
apply the connected correspondence on the ambient target.
Components with no corresponding tropical curve have zero series. 
The ordinary and fixed labels split as in Remark~\ref{rem:floor-split}. Finally,
$$
\sum_i(2g_i-2+\ell_i)=2g-2+\ell,
\qquad
\sum_i(2g_i'-2+\ell_i)=2g'-2+\ell,
$$
and the fixed contact factors multiply. The correspondence below follows by applying \cite[Theorems 5--6]{bou2019} componentwise.

\begin{theorem}\label{thm:log-trop-total}
Let $X_\cf$ be the normal projective toric surface corresponding to an $h$-transverse lattice polygon $\Delta=\Delta(\cf,\df)$ with $d_b>0$.
Let $\eta,\mu$ be two partitions such that $|\eta|+|\mu|=d_b$.
Fix $g'\in\zb$ such that $0\leq\delta:=|\Delta^\circ\cap\zb^2|-g'<M$, where $M$ is the height of $\Delta$, and
$n=c_1(X_\cf)\cdot\beta_\Delta-d_b+l(\mu)+g'-1\geq0$.
Then, we have the equality
$$
\begin{aligned}
&\sum_{g\geq g'}
N_{g,\beta_\Delta,n}^{X_\cf|\partial X_\cf,g-g'}(\eta,\mu)
u^{2g-2+\ell}\\
&\quad=
\left(\prod_{j=1}^{l(\eta)}\frac{[\eta_j]_q}{\eta_j}\right)
N_{\mathrm{trop}}^{\Delta,g'}(\eta,\mu;q)
\left((-i)(q^{1/2}-q^{-1/2})\right)^{2g'-2+\ell}
\end{aligned}
$$
of power series in $u$ with rational coefficients, where $q=e^{iu}$ and $\ell=|\partial\Delta\cap\zb^2|-d_b+l(\eta)+l(\mu)$.
\end{theorem}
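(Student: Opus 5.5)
The plan is to expand both sides of Theorem~\ref{thm:log-trop-total} over Minkowski decompositions and match them term by term, using the connected correspondence Theorem~\ref{thm:log-trop} on each component.

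\emph{Right-hand side.} By Definition~\ref{def:trop-count}, every tropical curve in $\mathcal T(\Delta,g';\eta,\mu)$ is a union of irreducible components. Hence $N_{\mathrm{trop}}^{\Delta,g'}(\eta,\mu;q)$ decomposes exactly as in Remark~\ref{rem:floor-split}: it is a sum over splittings $\Lambda=\bigsqcup\Lambda_i$ and $\varPi=\bigsqcup\varPi_i$, and over data $(\Delta_i,g_i',\mu_i)$, of products $\prod_i N_{c,\mathrm{trop}}^{\Delta_i,g_i'}(\eta_i,\mu_i;q)$. The multiplicities are multiplicative because vertices and fixed ends are distributed among the components.

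The argument in the proof of Theorem~\ref{thm:floor-trop} uses $\delta<M$ to rule out height-zero components. Then every component has positive height, and its number of point conditions is $|\Lambda_i|=c_1\cdot\beta_i-d_{b,i}+l(\mu_i)+g_i'-1\geq1$. The labels therefore split into nonempty subsets, matching the indexing of Definition~\ref{def:comb-log-gw}.

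\emph{Left-hand side.} By Definition~\ref{def:comb-log-gw}, the series $\sum_{g\ge g'}N^{X_\cf|\partial X_\cf,g-g'}_{g,\beta_\Delta,n}(\eta,\mu)u^{2g-2+\ell}$ is a sum over the same label splittings and data $(\Delta_i,g_i,m_i,\mu_i)$. Set $g_i'=g_i-m_i$. The constraints $g-1=\sum(g_i-1)$ and $m=g-g'=\sum m_i$ give $g'-1=\sum(g_i'-1)$. The constraint on $n_i$ is exactly the dimension constraint for genus $g_i'$ with $|\Lambda_i|$ points.

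For fixed $(\Lambda_i,\varPi_i,\Delta_i,g_i',\mu_i)$, summing over all $m_i\ge0$ (equivalently over $g_i=g_i'+m_i$) factors the generating series as a product. This uses the additivity $\ell=\sum_i\ell_i$ of boundary markings, where $\ell_i=c_1\cdot\beta_i-d_{b,i}+l(\eta_i)+l(\mu_i)$, together with $2g-2+\ell=\sum_i(2g_i-2+\ell_i)$. The product is
\[
\prod_i\sum_{g_i\ge g_i'}N^{X_\cf|\partial X_\cf}_{g_i,\beta_i,n_i}(g_i-g_i',\eta_i,\mu_i)u^{2g_i-2+\ell_i}.
\]

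\emph{Matching.} Apply Theorem~\ref{thm:log-trop} to each factor. Where $\Delta_i$ has a coarser normal fan or is a segment, pass to a common toric refinement and invoke logarithmic birational invariance \cite{aw2018}, as explained before Definition~\ref{def:comb-log-gw}. If $g_i'>|\Delta_i^\circ\cap\zb^2|$ or $n_i<0$, there are no tropical curves and no contributing invariants, so both sides vanish. Each factor becomes
\[
\prod_{j\in\varPi_i}\frac{[\omega(j)]_q}{\omega(j)}\,N_{c,\mathrm{trop}}^{\Delta_i,g_i'}(\eta_i,\mu_i;q)\bigl((-i)(q^{1/2}-q^{-1/2})\bigr)^{2g_i'-2+\ell_i}.
\]
In the product over $i$, the fixed-contact prefactors combine to $\prod_{j=1}^{l(\eta)}[\eta_j]_q/\eta_j$. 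The exponents add up to $2g'-2+\ell$ because $\sum_i(2g_i'-2+\ell_i)=2g'-2+\ell$. Summing over splittings then reproduces the right-hand side.

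\emph{Main obstacle.} The delicate point is the bijection of index sets. One must check that every admissible summand on the logarithmic side with $\Delta_i$ of height zero, or with $d_{b,i}=0$, either vanishes or has a tropical counterpart, and that the normalizations $1/(|\aut(\mu_i)|\prod d_{\rho,i}!)$ multiply correctly against the unlabeled free ends.

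For the normalizations, I would argue that labeled free contacts on each divisor are distributed among components, and that the multinomial count of these distributions cancels against $\prod_i d_{\rho,i}!/d_\rho!$. Since Theorem~\ref{thm:log-trop} is stated with this normalization per component, the product is consistent with the unlabeled tropical count.

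For height-zero components, I would use the cogenus inequality from the proof of Theorem~\ref{thm:floor-trop}. A height-zero component would force $\delta\ge M$, so no such component appears on either side.
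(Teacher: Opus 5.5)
Your proposal is correct and follows essentially the paper's argument. You decompose both sides over label splittings and Minkowski decompositions and set $g_i'=g_i-m_i$. You use $\delta<M$, via the cogenus computation in the proof of Theorem~\ref{thm:floor-trop}, to exclude height-zero components, so every tropical component has $n_i\geq1$ and matches a nonempty block of Definition~\ref{def:comb-log-gw}. You then apply the connected correspondence componentwise, using $\sum_i(2g_i'-2+\ell_i)=2g'-2+\ell$ and the multiplicativity of the fixed-contact factors.

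The normalization issue you flag is moot. Definition~\ref{def:comb-log-gw} is already a sum of products of individually normalized connected invariants, so no multinomial cancellation is needed. For components with $d_{b,i}=0$, you should cite \cite{bou2019} directly, as the paper does, because Theorem~\ref{thm:log-trop} as stated assumes $d_b>0$.
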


\section{Relative node polynomial for refined floor diagrams}\label{sec:log-polynomiality}

In this section, we establish polynomiality for the count \(N_{\floor}^{\Delta,g}(\eta,\mu;q)\) from Definition \ref{def:refined-count}.
Our proof combines the template construction of Ardila--Block \cite[Section 3]{AB-2013}, the relative terminal data of Block \cite[Section 3]{Block-2012}, and the combinatorial linearity theorem for long edge graphs recalled below.

We specify the degree conventions used throughout this section.
In $\deg_{\df}$ and $\deg_{\cf}$, every variable in the indicated group has degree one; $\deg_{(\cf,\df)}$ is the total degree with both groups assigned degree one.
The length relations are imposed by choosing independent coordinates on $\sum_i d_r^i=\sum_jd_l^j$, and $d_b$ is always replaced by its expression in $\cf,\df$.
For a polynomial $F$, write $[F]_{\df,k}$ for its homogeneous part of degree $k$ in the length variables.

\subsection{Long edge graphs and templates}
\label{subsec:range}

Let $\Delta$ be an $h$-transverse lattice polygon with $d_b>0$.
We introduce some notations that will be used in the following.
Suppose $L_j$ is a horizontal line at distance $M-j$ above the bottom edge of $\Delta$, where $j\in\{0,\ldots,M\}$.
Then the endpoints of the segment $L_j\cap\Delta$ have integral coordinates.
The length $W_j$ of the intersection $L_j\cap\Delta$ can be computed as follows.
Let $\rf^0=(r_1^0,\ldots,r_M^0)$ and $\lf^0=(l_1^0,\ldots,l_M^0)$ be two permutations of the multisets $d_r(\Delta)$ and $d_l(\Delta)$ such that
$r_1^0\geq r_2^0\geq\cdots\geq r_M^0$ and $l_1^0\leq l_2^0\leq\cdots\leq l_M^0$.
We call these the {\it natural orders} of the right directions and left directions.
From the notations in the introduction, we know that $W_j-W_{j-1}=r_j^0-l_j^0$ for $j\in\{1,\ldots,M\}$. Hence, we have
$$
W_j=W_j(\lf^0,\rf^0):=d_t+\sum_{v=1}^j(r_v^0-l_v^0),
W_0=W_0(\lf^0,\rf^0):=d_t,  W_M=W_M(\lf^0,\rf^0):=d_b.
$$
Let $\lf=(l_1,\ldots,l_M),\rf=(r_1,\ldots,r_M)$ be two arbitrary permutations of $d_l(\Delta)$ and $d_r(\Delta)$, respectively.
We define
$$
W_j(\lf,\rf):=d_t+\sum_{v=1}^j(r_v-l_v),
$$
and define the reversal weight  of the pair $(\lf,\rf)$ by
$$
\delta(\lf,\rf)=
\sum_{\substack{i<j\\r_i<r_j}}(r_j-r_i)
+\sum_{\substack{i<j\\l_i>l_j}}(l_i-l_j).
$$
A {\it reversal pair} of $\lf$ (resp. $\rf$) is a pair of entries $(l_i,l_j)$ in $\lf$ (resp. $(r_i,r_j)$ in $\rf$) such that $i<j$ and $l_i>l_j$ (resp. $i<j$ and $r_i<r_j$).

We collect some known estimates on $W_j, W_j(\lf,\rf)$.
\begin{lemma}[{\cite[equation (6.3)]{liu-osserman2018}}]
\label{lem: width1}
Let $\Delta$ be an $h$-transverse lattice polygon with height $M$, {and let $\lf,\rf$ be} two permutations of $d_l(\Delta)$ and $d_r(\Delta)$, respectively.
For $1\leq j<M$,
$
0\leq W_j-W_j(\lf,\rf)\leq\delta(\lf,\rf),
\sum_{j=1}^{M-1}\bigl(W_j-W_j(\lf,\rf)\bigr)=\delta(\lf,\rf)$.
\end{lemma}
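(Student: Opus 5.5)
The plan is to write $W_j-W_j(\lf,\rf)$ as the sum of a right-side defect and a left-side defect, and to handle each one as a sum over reversal pairs. Since $W_j-W_{j-1}=r_j^0-l_j^0$, we have
$$
W_j-W_j(\lf,\rf)=\Bigl(\sum_{v=1}^j r_v^0-\sum_{v=1}^j r_v\Bigr)+\Bigl(\sum_{v=1}^j l_v-\sum_{v=1}^j l_v^0\Bigr)=:A_j+B_j .
$$
The multiset $d_r(\Delta)$ is sorted decreasingly in $\rf^0$, so $\sum_{v\le j}r_v^0$ is the largest possible sum of $j$ entries of that multiset. Hence $A_j\ge0$. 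In the same way, $\lf^0$ is sorted increasingly, so $\sum_{v\le j}l^0_v$ is the smallest possible sum of $j$ entries, and $B_j\ge0$. This gives the lower bound $0\le W_j-W_j(\lf,\rf)$.

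For the upper bound and the sum identity, I would express $A_j$ through reversal pairs of $\rf$. Let $\#\{\cdot\}$ count indices.

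\begin{itemize}
\item For any real $t$, write $a_t(j)=\#\{v\le j: r^0_v>t\}-\#\{v\le j: r_v>t\}$. The layer-cake formula gives
$$
A_j=\int_{-\infty}^{\infty} a_t(j)\,dt .
$$
\item Since $\rf^0$ is decreasing, $\#\{v\le j: r^0_v>t\}=\min(j,N_t)$, where $N_t=\#\{v:r_v>t\}$. Consequently $a_t(j)$ is the number of "large" entries ($r>t$) lying in positions $>j$ when $N_t>j$.
\item In general, $a_t(j)=\min\bigl(\#\{v\le j: r_v\le t\},\ \#\{v>j: r_v>t\}\bigr)$.
\end{itemize}

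This min is at most the number of pairs $i\le j<k$ with $r_i\le t<r_k$. Integrating over $t$ gives
$$
A_j\le\sum_{i\le j<k,\ r_i<r_k}(r_k-r_i),
$$
and the right-hand side is at most the $\rf$-part of $\delta(\lf,\rf)$. The same argument applied to $\lf$ bounds $B_j$ by the $\lf$-part, so $W_j-W_j(\lf,\rf)\le\delta(\lf,\rf)$.

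The main obstacle is the sum identity, because it needs an exact count rather than the inequality $\min\le$ product. Summing over $j$ for fixed $t$, I would use the classical identity that the total displacement $\sum_{j=1}^{M-1}a_t(j)$ equals the number of inversions of the 0/1 word $\mathbf 1_{r_v>t}$, i.e. the number of pairs $i<k$ with $r_i\le t<r_k$. This holds because sorting the word by adjacent transpositions reduces the prefix counts of $1$'s at each cut by exactly one per transposition across that cut, and each inversion is removed by exactly one swap. Integrating in $t$, each pair $i<k$ with $r_i<r_k$ contributes exactly $r_k-r_i$, so
$$
\sum_j A_j=\sum_{i<k,\ r_i<r_k}(r_k-r_i).
$$
The symmetric computation for $\lf$ completes the identity $\sum_{j=1}^{M-1}\bigl(W_j-W_j(\lf,\rf)\bigr)=\delta(\lf,\rf)$. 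The terms with $j=M$ can be ignored, since $A_M=B_M=0$.
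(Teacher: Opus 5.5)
Your proof is correct and complete. The split $W_j-W_j(\lf,\rf)=A_j+B_j$, the prefix-maximality argument for $A_j,B_j\geq0$, the identity $a_t(j)=\min(z,y)$, and the count of $\sum_j a_t(j)$ as the number of pairs $i<k$ with $r_i\le t<r_k$ all hold. One wording fix: in the bubble-sort step, what drops by one is the deficit $a_t(j)$ at the cut, since each swap $01\to10$ across cut $j$ raises the prefix count of $1$'s there.

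The paper gives no proof of its own and simply cites \cite[equation (6.3)]{liu-osserman2018}, so your argument is an independent verification. It is, however, more roundabout than needed. You can skip the layer-cake reduction to $0/1$ words by bubble-sorting $\rf$ and $\lf$ themselves. Swapping adjacent entries $r_v<r_{v+1}$ changes only $W_v(\lf,\rf)$, raising it by $r_{v+1}-r_v$. It also lowers $\delta(\lf,\rf)$ by exactly $r_{v+1}-r_v$, because only the relative order of that one pair changes. A swap of $l_v>l_{v+1}$ behaves the same way. Sorting down to $(\lf^0,\rf^0)$, where both sides vanish, gives two things at once: each $W_j(\lf,\rf)$ is non-decreasing along the sort and ends at $W_j$, which is nonnegativity; and the total increase equals $\delta(\lf,\rf)$, which is the sum identity.

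Your separate upper-bound argument is also redundant: nonnegative terms summing to $\delta(\lf,\rf)$ are each at most $\delta(\lf,\rf)$. What your layer-cake route does buy is a cut-local refinement, $A_j\le\sum_{i\le j<k,\ r_i<r_k}(r_k-r_i)$. This bounds the defect at height $j$ by the weight of the reversal pairs straddling that cut. The paper never needs it, since its later uses (for instance $W_{j-1}(\lf,\rf)\geq 2e-\delta_0$ in the proof of Lemma~\ref{lem:poly-interior}) only invoke the crude bound $W_j-W_j(\lf,\rf)\le\delta(\lf,\rf)$.
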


\begin{definition}[{\cite[Definitions 11--12]{gk16}}]\label{def:poly-long edge}
A \textit{long edge graph} $G$ is a graph $(V,E)$ with a weight function $w:E\to\zb_{>0}$ satisfying:
\begin{itemize}
\item the vertex set is $V=\zb_{\geq0}$, and the edge set $E$ is finite;
\item $G$ may have multiple edges, but no loops;
\item $G$ has no \textit{short edges}, namely edges of weight $1$ connecting two adjacent vertices.
\end{itemize}
The {\it refined multiplicity} and {\it cogenus} of a long edge graph $G$ are
$$
\mult_q(G)=\prod_{e\in E}[w(e)]_q^2,~
\delta(G)=\sum_{e\in E}\bigl(\ell(e)w(e)-1\bigr).
$$
\end{definition}

An edge joining $i,j$ with $i<j$ is denoted by $e:i\to j$ and has length $\ell(e)=j-i$.
A vertex $v$ is \textit{strictly covered} if there exists some edge $i\to j$ satisfying $i<v<j$.
For a nonempty graph $G$, let $\operatorname{minv}(G)$ and $\operatorname{maxv}(G)$ be the smallest and largest vertices incident to an edge of $G$. The \textit{length} of $G$ is
$
\ell(G)=\operatorname{maxv}(G)-\operatorname{minv}(G).
$
Let $G^{(k)}$ denote the shifted graph obtained by increasing every vertex of $G$ by $k$.

\begin{definition}[{\cite[Definitions 14 and 23]{gk16}}]\label{def:poly-template}
A nonempty long edge graph $\Gamma$ with $\operatorname{minv}(\Gamma)=0$ is a \textit{template} if, for any vertex $1\leq i\leq\ell(\Gamma)-1$, there exists at least one edge $j\to k$ with $j<i<k$.
A long edge graph $G$ is a \textit{shifted template} if $G=\Gamma^{(k)}$ for a template $\Gamma$ and an integer $k\geq0$.

For a nonempty long edge graph $G$, set $\epsilon_0(G)=1$ if all edges incident to $\operatorname{minv}(G)$ have weight $1$, and set $\epsilon_0(G)=0$ otherwise.
Similarly, set $\epsilon_1(G)=1$ if all edges incident to $\operatorname{maxv}(G)$ have weight $1$, and set $\epsilon_1(G)=0$ otherwise.
\end{definition}

\begin{lemma}[{\cite[Lemma 4.2]{BCK-2014}}]
\label{lem:poly-template-decomposition}
Every nonempty long edge graph $G$ has a unique expression as an ordered union of translated templates.
\end{lemma}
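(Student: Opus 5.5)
The plan is to cut $G$ at the vertices that no edge strictly covers, and then to show that any decomposition into translated templates must cut at exactly these vertices. Here an \emph{ordered union} $G=\Gamma_1^{(k_1)}\cup\cdots\cup\Gamma_s^{(k_s)}$ means that the edge set of $G$ is the disjoint union of the edge sets of the pieces, with $\operatorname{maxv}(\Gamma_i^{(k_i)})\leq\operatorname{minv}(\Gamma_{i+1}^{(k_{i+1})})$ for all $i$. Put $a=\operatorname{minv}(G)$ and $b=\operatorname{maxv}(G)$. Let
$$a=a_0<a_1<\cdots<a_r=b$$
be the vertices of $[a,b]$ that are not strictly covered by any edge of $G$. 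The endpoints $a$ and $b$ are never strictly covered, so they appear in this list.

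\textbf{Existence.} No edge $i\to j$ can satisfy $i<a_p<j$ for any $p$. Hence every edge lies in exactly one interval $[a_{p-1},a_p]$. Let $G_p$ be the set of edges in $[a_{p-1},a_p]$, and keep only the nonempty $G_p$.

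We check that each nonempty $G_p$, shifted down by $a_{p-1}$, is a template.
\begin{itemize}
\item Suppose $a_p-a_{p-1}\geq2$. Each vertex $v$ with $a_{p-1}<v<a_p$ is strictly covered by some edge of $G$. That edge cannot cross $a_{p-1}$ or $a_p$, so it lies in $G_p$. Applying this to $v=a_{p-1}+1$ gives an edge of $G_p$ starting at $a_{p-1}$, and applying it to $v=a_p-1$ gives one ending at $a_p$. So $\operatorname{minv}(G_p)=a_{p-1}$ and $\operatorname{maxv}(G_p)=a_p$, and the covering condition of Definition~\ref{def:poly-template} holds.
\item Suppose $a_p-a_{p-1}=1$. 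Then $G_p$ is nonempty only if it consists of edges $a_{p-1}\to a_p$. Such a graph is trivially a template, with no interior vertices to cover.
\end{itemize}
Since no edge of $G$ is a loop, the pieces $G_p^{(-a_{p-1})}$ are long edge graphs. Because the intervals are ordered, these pieces give an ordered union of translated templates equal to $G$.

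\textbf{Uniqueness.} Take any expression $G=\Gamma_1^{(k_1)}\cup\cdots\cup\Gamma_s^{(k_s)}$ as an ordered union. Consider a piece $\Gamma_i^{(k_i)}$ whose vertices run from $k_i$ to $k_i+\ell(\Gamma_i)$.
\begin{itemize}
\item Every interior vertex of this range is strictly covered by an edge of the piece, by the template property. Hence none of the $a_p$ lies strictly inside the range.
\item Both endpoints $k_i$ and $k_i+\ell(\Gamma_i)$ are not strictly covered by any edge of $G$. Edges of the same piece lie in the range, so they cannot strictly cover the endpoints. Edges of other pieces lie weakly to the left or weakly to the right of the range, by the ordering condition, so they cannot either. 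Hence both endpoints belong to $\{a_0,\ldots,a_r\}$.
\end{itemize}
The two facts together force each range $[k_i,k_i+\ell(\Gamma_i)]$ to equal a consecutive interval $[a_{p-1},a_p]$. Then $\Gamma_i^{(k_i)}$ must consist of all edges of $G$ in that interval, that is, it equals $G_p$. Distinct pieces have distinct intervals, since a nonempty $G_p$ cannot be split between two pieces supported on the same interval. Therefore the decomposition coincides with the one constructed above, both as a set of pieces and in its order.

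\textbf{Main obstacle.} The only delicate point is the bookkeeping in the uniqueness step. One has to see that the ordering condition, that consecutive pieces meet only at endpoints, is exactly what guarantees that edges of other pieces do not strictly cover a piece's endpoints. One also has to handle the degenerate case of length-one pieces, which consist of edges $a_{p-1}\to a_p$ of weight at least $2$, since short edges are excluded.
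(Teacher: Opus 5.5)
Your proof is correct, and it is the standard argument behind the cited result: cut $G$ at the vertices not strictly covered by any edge, as in Fomin--Mikhalkin and \cite[Lemma 4.2]{BCK-2014}; the paper gives no proof of its own and simply quotes that lemma. The one line to tighten is ``distinct pieces have distinct intervals'': as written it restates the claim, but it follows directly from the ordering condition, since two pieces with the same range $[a_{p-1},a_p]$, where $a_{p-1}<a_p$, would violate $\operatorname{maxv}\leq\operatorname{minv}$.
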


\begin{definition}[{\cite[Definition 4]{Block-2012}}]
\label{def:poly-terminal}
An \textit{extended template} is a tuple $(\Lambda,A,B)$ of length $\ell=l(\Lambda)=l(\Lambda,A,B)$, where $\ell\geq0$, with the following properties.
The graph $\Lambda$ is a finite directed graph on $\{0,\ldots,\ell\}$, possibly with multiple edges, whose edge weights are positive integers and satisfy:
\begin{enumerate}[(i)]
\item If $i\to j$ is an edge, then $i<j$.
\item $\Lambda$ has no short edges.
\end{enumerate}
The matrices $A=(a_{ij})_{i,j\geq1}$ and $B=(b_{ij})_{i,j\geq1}$ have nonnegative integral entries and finite support.
Let $l(A)$ and $l(B)$ denote their largest nonzero row indices, with length $0$ for the zero matrix.
We require
$
l(\Lambda)\geq\max\{l(A),l(B)\},
$
and, for every $1\leq j\leq l(\Lambda)-\max\{l(A),l(B)\}$, there exists an edge $i\to k$ of $\Lambda$ with $i<j<k$.
\end{definition}
The cogenus of an extended template is
$\delta(\Lambda,A,B)=\delta(\Lambda)+\delta(A)+\delta(B)$, where
$$
\delta(\Lambda)=\sum_{e:i\to j\text{ in }\Lambda}\bigl((j-i)w(e)-1\bigr),~
\delta(A)=\sum_{i,j\geq1}ij\,a_{ij},~
\delta(B)=\sum_{i,j\geq1}ij\,b_{ij}.
$$

If $l(\Lambda)>0$, the covering condition forces $A$ and $B$ not to be both zero, since no edge can strictly cover the last vertex.
If $l(\Lambda)=0$, both matrices vanish and $\Lambda$ has a single vertex and no edges; this is the empty extended template, of cogenus $0$.
The covering condition gives
\begin{equation}\label{eq:poly-term-bound}
l(\Lambda)\leq\delta(\Lambda,A,B).
\end{equation}
Indeed, we have
$
l(\Lambda)-\max\{l(A),l(B)\}
\leq\sum_{e:i\to j\text{ in }\Lambda}(j-i-1)\leq\delta(\Lambda),
\max\{l(A),l(B)\}\leq\delta(A)+\delta(B).
$

\begin{lemma}[{{\cite[Sections 3.3.2 and 5]{AB-2013}}}]\label{lem:poly-defect}
Let $\wt\dl$ be an $(\eta,\mu)$-marked $\Delta$-floor diagram of genus $g$, and let $\delta=\delta(\dl)=|\Delta^\circ\cap\zb^2|-g$.
Then the following two statements hold.
\begin{enumerate}
\item The cogenus is given by
\begin{equation}\label{eq:poly-defect}
\delta={}\delta(\lf,\rf)
+\sum_{e:i\to j\text{ in }\dl}\bigl((j-i)w(e)-1\bigr)
+\sum_{v=1}^M(v-1)s_v
+\sum_{v=1}^M(M-v)(|\eta^v|+|\mu^v|).
\end{equation}
\item If $\cf,\df$ are sufficiently large and spread out, the width $W_j$ satisfies $W_j\geq2\delta$ for $1\leq j<M$.
\end{enumerate}
\end{lemma}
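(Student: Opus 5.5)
Part (1) is a direct count of lattice points, and Part (2) is a telescoping lower bound on widths.

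\emph{Part (1).} For a connected diagram we have $\delta=|\Delta^\circ\cap\zb^2|-g$ with $g=E-M+1$. By Pick's theorem, $|\Delta^\circ\cap\zb^2|=\tfrac12\area(\Delta)-\tfrac12|\partial\Delta\cap\zb^2|+1$. Using $|\partial\Delta\cap\zb^2|=2M+d_t+d_b$, this becomes $\sum_{j=1}^{M}(W_{j-1}+W_j)/2-M-(d_t+d_b)/2+1$, so that
\[
|\Delta^\circ\cap\zb^2|=\sum_{j=1}^{M-1}W_j-M+1 .
\]
Next we use the divergence constraint $\di(j)=r_j-l_j+s_j-|\eta^j|-|\mu^j|$. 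Cutting the diagram between floors $v$ and $v+1$, the total weight of edges $i\to k$ with $i\le v<k$ equals $\sum_{j\le v}\di(j)=W_v(\lf,\rf)-d_t+\sum_{j\le v}s_j-\sum_{j\le v}(|\eta^j|+|\mu^j|)$. Summing this over $v=1,\dots,M-1$ gives two things. On the left side we obtain $\sum_e(k-i)w(e)$. On the right side, Lemma~\ref{lem: width1} turns $\sum_v W_v(\lf,\rf)$ into $\sum_v W_v-\delta(\lf,\rf)$. The $s$- and $(\eta,\mu)$-terms then produce $\sum_j(M-j)s_j$ and $-\sum_j(M-j)(|\eta^j|+|\mu^j|)$.

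We rewrite $\sum_j (M-j)s_j=(M-1)d_t-\sum_j(j-1)s_j$, using $\sum_j s_j=d_t$, and we write $-d_t(M-1)$ for the sum of the constant $-d_t$ terms. The $d_t$ contributions then cancel. Substituting $\sum_e(k-i)w(e)=\sum_e((k-i)w(e)-1)+E$ and $E=g+M-1$ into the identity yields exactly \eqref{eq:poly-defect}.

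For a disconnected diagram the same computation applies with $g=E-M+1$, since that relation was noted for the total genus. Alternatively, one may invoke the Bernstein identity $\delta_\dl=|\Delta^\circ\cap\zb^2|-g_\dl$ already recorded. The only subtle point is keeping signs straight in the $s$-term versus the $(\eta,\mu)$-term, since the top contacts enter at floor $j$ and are counted from the top.

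\emph{Part (2).} We show $W_j\geq 2\delta$ for $1\le j<M$ using the hypotheses \eqref{eq:poly-range}. Since $W_j-W_{j-1}=r^0_j-l^0_j$ with $r^0$ nonincreasing and $l^0$ nondecreasing, the sequence $W_j$ is concave in $j$. Hence its minimum over $1\le j\le M-1$ is attained at $j=1$ or $j=M-1$.

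At $j=1$ we have $W_1=d_t+c^1_r-c^1_l\ge2\delta$ by hypothesis. At the other end, $W_{M-1}=W_M-(r^0_M-l^0_M)=d_b-c^{p_r}_r+c^{p_l}_l$. The hypothesis only directly provides $d_b+c_r^{p_r}-c_l^{p_l}\ge2\delta$, which reads the bottom edge from below. So I would reinterpret this via the width of the line just above the bottom with the sign convention of the paper; this is \cite[Section 5]{AB-2013}, and I would cite it for this step.

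If a direct argument is preferred, concavity still suffices on the interior. For this we use $d_r^i,d_l^j\ge\delta+1$ together with the separation condition. Together these keep the breakpoints of the piecewise-linear $W$ away from the ends, so the minimum over $1\le j<M$ is controlled by the two end estimates.

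I expect the main obstacle to be matching the sign conventions at the bottom end, rather than the combinatorics.
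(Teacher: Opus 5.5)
\textbf{Part (1).} Your argument is correct and is the paper's own: sum the level-cut identity $\sum_{a\le j<b}w(e)=\sum_{v\le j}\di(v)$ over $j=1,\dots,M-1$, then combine it with Lemma~\ref{lem: width1} and $g=|E(\dl)|-M+1$. Obtaining $|\Delta^\circ\cap\zb^2|=\sum_{j=1}^{M-1}(W_j-1)$ from Pick's theorem is only a cosmetic difference.

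\textbf{Part (2).} There is a gap at the bottom end. The paper proves this part only by citing \cite{AB-2013}. Your concavity reduction to the endpoints $j=1$ and $j=M-1$ is a valid and more transparent route, and the case $j=1$ is correct. At $j=M-1$, however, you have $W_{M-1}=d_b-s$ with $s=c_r^{p_r}-c_l^{p_l}$, while the hypothesis is $d_b+s\ge2\delta$. For $s>0$ these are different inequalities, so you cannot ``reinterpret'' one as the other. Falling back on a citation also does not resolve the sign mismatch you yourself noticed. Your last paragraph about breakpoints does not help either: concavity alone already puts the minimum at an endpoint, so everything hinges on this one estimate.

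The estimate does follow from the stated hypotheses. Since $s=r^0_M-l^0_M$ is the last increment and the increments are nonincreasing, $r^0_j-l^0_j\ge s$ for every $j$.
\begin{itemize}
\item If $s\le0$, then $W_{M-1}=d_b-s\ge d_b+s\ge2\delta$.
\item If $s\ge1$, then
\[
W_{M-1}=d_t+\sum_{j=1}^{M-1}\bigl(r^0_j-l^0_j\bigr)\ge d_t+(M-1)s\ge\delta+\delta=2\delta,
\]
using $d_t\ge\delta$ and $M\ge d_r^1\ge\delta+1$.
\end{itemize}
With this case split, your proof of (2) is complete and self-contained. It also shows that the separation condition in \eqref{eq:poly-range} is not needed for this part.
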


\begin{proof}
The second statement {follows from the estimates in Step 2} of the proof of Theorem 1.2 in \cite{AB-2013} under the hypotheses that $\cf,\df$ are sufficiently large and spread out.
The first statement is a relative version of the cogenus formula in \cite[Section~3.3.2]{AB-2013}. Here we provide a combinatorial proof of it.

We compute the genus of the corresponding unmarked $\Delta$-floor diagram $\dl$ first.
Suppose that the connected components of $\dl$ are  $\dl_1,\ldots,\dl_t$. Then $g=\sum_{i=1}^tg(\dl_i)+1-t$.
Note that $g(\dl_i)=|E(\dl_i)|-|V(\dl_i)|+1$, so we have
$g=|E(\dl)|-M+1$.
From the definition of $W_j$, we have $|\Delta^\circ\cap\zb^2|=\sum_{j=1}^{M-1}(W_j-1)$. Hence, the cogenus $\delta$ is given by $\delta=|\Delta^\circ\cap\zb^2|-g=\sum_{j=1}^{M-1}W_j-|E(\dl)|$.
From Lemma \ref{lem: width1}, we know $\sum_{j=1}^{M-1}\bigl(W_j-W_j(\lf,\rf)\bigr)=\delta(\lf,\rf)$.
Therefore, $\delta=\delta(\lf,\rf)+\sum_{j=1}^{M-1}W_j(\lf,\rf)-|E(\dl)|$.

In the following we compute $\sum_{j=1}^{M-1}W_j(\lf,\rf)$.
From the definition of divergence, we have
$$
\sum_{v=1}^j\di(v)
=\sum_{\substack{v\xrightarrow{e}b\\ v\leq j}}w(e)
-\sum_{\substack{a\xrightarrow{e}v\\ v\leq j}}w(e).
$$
We split the outgoing sum according to whether the other endpoint $b$ is at most $j$ or greater than $j$,
so we have
$$
\sum_{\substack{v\xrightarrow{e}b\\ v\leq j}}w(e)=
\sum_{\substack{a\xrightarrow{e}b\\ a<b\leq j}}w(e)+\sum_{\substack{a\xrightarrow{e}b\\ a\leq j<b}}w(e).
$$
Therefore, $\sum_{\substack{a\xrightarrow{e}b\\ a\leq j<b}}w(e)=\sum_{v=1}^j\di(v)$.
Since $\di(v)=r_v-l_v+s_v-(|\eta^v|+|\mu^v|)$, we get
$$
\sum_{e:a\leq j<b}w(e)
=\sum_{v=1}^j(r_v-l_v)+\sum_{v=1}^j s_v
-\sum_{v=1}^j(|\eta^v|+|\mu^v|).
$$
Note that $d_t=\sum_{i=1}^Ms_i$ and $W_j(\lf,\rf):=d_t+\sum_{v=1}^j(r_v-l_v)$, so we know
$$
\sum_{e:a\leq j<b}w(e)
=W_j(\lf,\rf)-\sum_{v=j+1}^M s_v-\sum_{v=1}^j(|\eta^v|+|\mu^v|).
$$
We now sum this equality over all $j\in\{1,\ldots,M-1\}$.
The sum of the {left-hand side} of the equality is
$$\sum_{j=1}^{M-1}\sum_{e:a\leq j<b}w(e)
=\sum_{e:a\to b}w(e)\sum_{j=a}^{b-1}1
=\sum_{e:a\to b}(b-a)w(e).
$$
Hence, we have
$$
\sum_{e:a\to b}(b-a)w(e)
=\sum_{j=1}^{M-1}W_j(\lf,\rf)-\sum_{v=1}^M(v-1)s_v
-\sum_{v=1}^M(M-v)(|\eta^v|+|\mu^v|).
$$
Since $|E(\dl)|=\sum_{e\in E(\dl)}1$, combining the value of $\sum_{j=1}^{M-1}W_j(\lf,\rf)$ with the equality $\delta=\delta(\lf,\rf)+\sum_{j=1}^{M-1}W_j(\lf,\rf)-|E(\dl)|$, we obtain equation (\ref{eq:poly-defect}).
\end{proof}

\begin{definition}
Let $\Delta=\Delta(\cf,\df)$ be a lattice polygon with height $M$ and let $\delta\geq0$.
Let $\eta=\emptyset, \mu=(1^{d_b})$,
{and let $(\{\eta^i\},\{\mu^i\})_{i=1}^M$ be} a compatible pair with $\eta^i=\emptyset$ for any $i\in\{1,\ldots,M\}$,
and {$\mu^j=\emptyset$ for every $j<M$, and $\mu^M=\mu$}.
An \textit{interior marked $\Delta$-floor diagram} is a marked diagram obtained from a $(\emptyset,1^{d_b})$-marked floor diagram with the compatible pair $(\{\eta^i\},\{\mu^i\})_{i=1}^M$ by deleting the $\mu$-vertices and their incident edges and preserving the induced linear order on the remaining vertices.
Define
$$
K_\delta(\Delta;q)=\sum_{\wt\dl}\mult_q(\dl),
$$
where the sum is over interior marked $\Delta$-floor diagrams of cogenus $\delta$ up to order-preserving isomorphisms fixing the original floor vertices and preserving weights and vertex types.
\end{definition}

\begin{definition}[{\cite[Definitions 13, 15]{gk16}}]\label{def:poly-ordering}
Let $G$ be a long edge graph. For any $j\geq1$, we define
$\lambda_j(G)=\sum_{e}w(e)$, where $e$ {runs over the edges} $e:i\to v$ with $i<j\leq v$.
Let $\beta=(\beta_0,\ldots,\beta_M)\in\zb_{\geq0}^{M+1}$.
The graph $G$ is called \textit{$\beta$-allowable} if $\operatorname{maxv}(G)\leq M+1$ and $\beta_{j-1}\geq\lambda_j(G)$ for $1\leq j\leq M+1$.

For a $\beta$-allowable graph $G$, define a new graph $\operatorname{ext}_\beta(G)$ by adding $\beta_{j-1}-\lambda_j(G)$ edges of weight $1$ joining $j-1$ and $j$ for $1\leq j\leq M+1$.
A \textit{$\beta$-extended ordering} of $G$ is a total order on the vertices and edges of $\operatorname{ext}_\beta(G)$ such that
\begin{itemize}
\item it extends the natural ordering of the vertices,
\item if $e$ is an edge connecting $i$ and $j$, then $e$ is between $i$ and $j$.
\end{itemize}
Two such orders are equivalent if one is obtained from the other by permuting edges with the same endpoints and weight.
\end{definition}
Given a long edge graph $G$, let $P_\beta(G)$ be the number of $\beta$-extended orderings of $G$ up to equivalence. Set $P_\beta(G)=0$ when $G$ is not $\beta$-allowable.
The empty graph is $\beta$-allowable, with $P_\beta(\emptyset)=1$.

\begin{definition}[{\cite[Definition 22]{gk16}}]
A \textit{partition} of a long edge graph $G=(V,E,w)$ is a tuple $(G_1,\ldots,G_n)$ of nonempty long edge graphs such that the disjoint union of the (weighted) edge sets of $G_1,\ldots,G_n$ is the (weighted) edge set of $G$.
For a long edge graph $G$, define
$$
\Phi_\beta(G)=\sum_{n\geq1}\frac{(-1)^{n+1}}n
\sum_{G_1,\ldots,G_n}\prod_{j=1}^nP_\beta(G_j),
$$
where the inner sums run over the partitions $(G_1,\ldots,G_n)$ of $G$.
\end{definition}

Let $\beta=(\beta_0,\ldots,\beta_M)\in\zb_{\geq0}^{M+1}$.
Define
$$
N_{\beta,\delta}(q)=\sum_{\delta(G)=\delta}\mult_q(G)P_\beta(G), \text{ and } N_{\beta,\delta}^0(q)=\sum_{\substack{\delta(G)=\delta\\ weight~ conditions}}\mult_q(G)P_\beta(G),
$$
where the first sum is taken over all long edge graphs $G$ of cogenus $\delta$, and the second sum is over all cogenus $\delta$ long edge graphs whose edges adjacent to 0 have weight 1. Define $Q_{\beta,\delta}(q)$ and $Q_{\beta,\delta}^0(q)$ via
$$
\log(1+\sum_{\delta>0}N_{\beta,\delta}(q)z^\delta)
=\sum_{\delta>0}Q_{\beta,\delta}(q)z^\delta, \text{ and }
\log(1+\sum_{\delta>0}N_{\beta,\delta}^0(q)z^\delta)
=\sum_{\delta>0}Q_{\beta,\delta}^0(q)z^\delta.
$$
The arguments giving \cite[equation (2.8)]{liu2016}, \cite[equation (3.9)]{liu-osserman2018}, 
and \cite[equation (2.4)]{gk16} yield
\begin{equation}\label{eq:poly-formal-log}
Q_{\beta,\delta}(q)=\sum_{\delta(G)=\delta}\mult_q(G)\Phi_\beta(G), \text{ and }
Q_{\beta,\delta}^0(q)=\sum_{\substack{\delta(G)=\delta\\ weight~ conditions}}\mult_q(G)\Phi_\beta(G).
\end{equation}

\begin{lemma}[{\cite[Corollary 3.1 and Lemma 5.9]{liu2016}}]\label{lem:poly-log}
Let $\beta=(\beta_0,\ldots,\beta_M)\in\zb_{\geq0}^{M+1}$. Then we have the following statements.
\begin{enumerate}
\item For a nonempty long edge graph $G$, we have $\Phi_\beta(G)=0$ unless $G$ is a shifted template.
\item For a template $\Gamma$ of length $\ell\leq M+1$, if $\beta_{j-1}\geq\lambda_j(\Gamma)$ for all  {$1\leq j\leq\ell$, we have}
\begin{equation}\label{eq:poly-affine}
\Phi_\beta(\Gamma)=\eta_0(\Gamma)+\sum_{j=1}^{\ell}\eta_j(\Gamma)\beta_{j-1},
\end{equation}
where $\eta_j(\Gamma)$ are constants.
\end{enumerate}
\end{lemma}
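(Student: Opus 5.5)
The plan is to treat $\Phi_\beta$ as a logarithm and prove both statements by a cluster expansion. View $F(S)=P_\beta(S)$ as a function on finite multisets $S$ of weighted long edges. By the definition of $\Phi_\beta$, the generating function $\sum_G\Phi_\beta(G)x^G$ is the formal logarithm of $1+\sum_{G\neq\emptyset}P_\beta(G)x^G$ in the completed monoid algebra on edge multisets. Equivalently, $\Phi_\beta$ is the connected part of $P_\beta$ under the inclusion--exclusion over edge partitions, as in \eqref{eq:poly-formal-log}.

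\emph{Statement (1).} Suppose the nonempty graph $G$ is not a shifted template. Then there is a vertex $k$ with $\operatorname{minv}(G)<k<\operatorname{maxv}(G)$ that no edge strictly covers. Hence every subgraph $H\subseteq G$ splits as $H=H_{\leq k}\sqcup H_{\geq k}$, with all edges of the first part in $[0,k]$ and of the second in $[k,\infty)$.

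I first check that $P_\beta(H)=P_\beta(H_{\leq k})\,P_\beta(H_{\geq k})$ for every such $H$.
\begin{itemize}
\item Allowability is a condition interval by interval. Since $\lambda_j(H)$ equals $\lambda_j$ of whichever part lives over the interval $(j-1,j)$, $H$ is $\beta$-allowable exactly when both parts are.
\item In $\operatorname{ext}_\beta(H)$ the vertex $k$ is a cut point for the order. Every element is forced to lie either below $k$ or above $k$, so a $\beta$-extended ordering is exactly a pair of orderings of the two halves.
\item The added short edges are assigned to the correct side, and equivalence only permutes edges with the same endpoints and weight, so it respects the splitting.
\end{itemize}
Thus $1+\sum P_\beta(G)x^G$ factors as a product of a series in ``left'' edges and a series in ``right'' edges. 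Its logarithm is therefore the sum of the two logarithms, so it has no monomial mixing left and right edges. This gives $\Phi_\beta(G)=0$. Multiplicities of equal edges are harmless, because the partitions in the definition of $\Phi_\beta$ are tuples of sub-multisets, exactly matching the monoid algebra.

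\emph{Statement (2).} This is the main obstacle. Fix a template $\Gamma$ of length $\ell$ and assume $\beta_{j-1}\geq\lambda_j(\Gamma)$ for $j\leq\ell$.

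First, I compute $P_\beta(H)$ for subgraphs $H\subseteq\Gamma$. Short edges between $j-1$ and $j$ with the same weight and endpoints are indistinguishable, so a $\beta$-extended ordering amounts to two choices:
\begin{itemize}
\item where, among the $\beta_{j-1}-\lambda_j(H)$ short edges and the vertex $j$, the long edges starting or ending near the interval $(j-1,j)$ sit;
\item the relative order of the long edges themselves.
\end{itemize}
Outside $[0,\ell]$ only short edges appear, and they contribute a factor $1$. Hence $P_\beta(H)$ is a finite sum of products of binomial coefficients in the quantities $\beta_{j-1}-\lambda_j(H)$, which is polynomial in $\beta_0,\dots,\beta_{\ell-1}$.

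To see that the logarithm is affine, I will use the following model, in the spirit of Fomin--Mikhalkin and Liu:
\begin{itemize}
\item Replace each short edge in interval $j$ by a ``slot''.
\item Encode an ordering of $\operatorname{ext}_\beta(H)$ as an assignment of each long edge's insertion data to slots.
\item Placements of two edge subsets that occupy disjoint, non-interleaving slot ranges contribute independently, by the factorization argument of (1) applied at the level of slots.
\end{itemize}
Taking the logarithm then keeps only ``connected clusters'', meaning placements in which all edges of $\Gamma$ interact. Because $\Gamma$ is a template, every interval is covered, so a connected cluster is determined by one anchoring slot position together with bounded local data. The number of anchor positions in interval $j$ is $\beta_{j-1}$ plus a constant, which gives the affine form $\eta_0(\Gamma)+\sum_j\eta_j(\Gamma)\beta_{j-1}$ with constants depending only on $\Gamma$.

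The delicate point I expect is making this translation-invariance argument rigorous near the vertices $j$ and near the ends $0$ and $\ell$. There the slot counts are shifted by $\lambda_j$ and by the vertex itself. I would handle this by first proving linearity for $\beta$ large, where boundary effects stabilize and a polynomial with linear growth along all rays is affine, and then extending to all $\beta$ satisfying the hypothesis by polynomial identity. This relies on the binomial expressions remaining valid as long as $\beta_{j-1}\geq\lambda_j(\Gamma)$.
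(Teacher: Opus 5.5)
Your proof of (1) is correct and is the standard argument; the paper gives no proof of this lemma and simply imports it from Liu. At an uncovered vertex $k$ with $\operatorname{minv}(G)<k<\operatorname{maxv}(G)$, allowability, extended orderings and the equivalence relation all split. Hence $P_\beta(H)=P_\beta(H_{\leq k})P_\beta(H_{\geq k})$ for all $H\subseteq G$, and the logarithm has no mixed monomials. One imprecision: the full series $1+\sum P_\beta(G)x^G$ does not factor, since it contains edges crossing $k$. You should first pass to the quotient by the monomials not dividing $x^G$, which leaves the $x^G$-coefficient of the logarithm unchanged.

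For (2) there is a genuine gap. The step meant to force degree at most one is your claim that placements in disjoint, non-interleaving slot ranges contribute independently, so that a connected cluster is one anchor plus bounded local data. This is false in your model. The number of short edges in interval $j$ is $\beta_{j-1}-\lambda_j(H)$, which depends on every edge of $H$ lying over that interval, wherever that edge is placed. Take $\Gamma=\{e,e\}$ with $e:0\to2$ of weight one. Then $P_\beta(e)=\beta_0+\beta_1$ and $P_\beta(\Gamma)=\binom{\beta_0}{2}+(\beta_0-1)(\beta_1-1)+\binom{\beta_1}{2}$. The configuration with one copy in each interval uses disjoint slots, yet contributes $(\beta_0-1)(\beta_1-1)$ rather than $\beta_0\beta_1$. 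The affine answer $\Phi_\beta(\Gamma)=1-\tfrac32(\beta_0+\beta_1)$ comes from a cancellation of quadratic terms that involves exactly this nonlocal correction. That cancellation also fails if the two copies are treated as distinct in the partition sum, a distinction your heuristic never registers. So nothing in your sketch controls the higher-degree terms of the $P_\beta(H)$, and the large-$\beta$ fallback assumes the linear growth it is supposed to prove.

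What is missing is an exact expansion that distributes the $H$-dependence over single edges. One way to get it:
\begin{itemize}
\item Label the long edges. With $s_j=\beta_{j-1}-\lambda_j(H)$, the words in interval $j$ containing $p_j$ long edges number $(s_j+1)\cdots(s_j+p_j)=\partial_{y_j}^{p_j}(1-y_j)^{-s_j-1}|_{y_j=0}$. The factor $(1-y_j)^{\lambda_j(H)}$ is a product of one factor per edge over $j$.
\item Write $J(e)$ for the set of intervals under $e$ and $D_e=\sum_{j\in J(e)}\partial_{y_j}$. Then $P^{\mathrm{lab}}_\beta(H)=\prod_{e\in H}D_e\bigl[\prod_j(1-y_j)^{-\beta_{j-1}-1}\prod_{e\in H}\prod_{j\in J(e)}(1-y_j)^{w(e)}\bigr]_{y=0}$.
\item Fa\`a di Bruno turns this into a sum over set partitions of $H$, with block weights $(|B|-1)!\sum_{j\in\bigcap_{e\in B}J(e)}(\beta_{j-1}+1-\lambda_j(H))$.
\item Expand each $\lambda_j(H)$ into edge contributions, and let every block point either to $\beta$ or to the block containing the chosen edge. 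This gives a functional graph on blocks whose weight factors over components, and each component carries at most one factor $\beta_{j-1}+1$.
\item The exponential formula then identifies the labelled logarithm with an affine connected weight. Dividing by multiplicity factorials gives the same for $\Phi_\beta(\Gamma)$.
\end{itemize}
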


\subsection{Polynomiality for refined counts of relative floor diagrams}\label{subsec:poly}

\subsubsection{The interior polynomial}
The number of pairs $(\lf,\rf)$ with $\delta(\lf,\rf)=\delta_0\leq \delta$ is determined in
 \cite[Proposition~6.8]{liu-osserman2018} and \cite[proof of Corollaries~6.4--6.5]{liu-osserman2018}, and we summarize it as follows.
For an integer bound $\delta\geq0$, let $m_\cf(\delta_0)$, $0\leq\delta_0\leq \delta$, be the coefficients determined by
\begin{equation}\label{eq:poly-swaps}
\sum_{\delta_0=0}^\delta m_\cf(\delta_0)z^{\delta_0}
\equiv
\prod_{i<p_r}\prod_{v\geq1}(1-z^{v(c_r^i-c_r^{i+1})})^{-1}
\prod_{j<p_l}\prod_{v\geq1}(1-z^{v(c_l^{j+1}-c_l^j)})^{-1}
\mod z^{\delta+1}.
\end{equation}
The congruence modulo $z^{\delta+1}$ means that the coefficients in degrees $0,\ldots,\delta$ agree.

\begin{lemma}[{\cite[Proposition 6.8, proof of Corollaries~6.4--6.5]{liu-osserman2018}}]
\label{lem:poly-swaps}
Let $\Delta(\cf,\df)$ be an $h$-transverse lattice polygon. Suppose that $\cf,\df$ are sufficiently large and spread out for $\delta\geq1$.
Then the number of pairs $(\lf,\rf)$ with $\delta(\lf,\rf)=\delta_0\leq \delta$ is $m_\cf(\delta_0)$.
This number is independent of $\df$.
If condition \eqref{eq:poly-gap-range} holds, $m_\cf(0)=1$ and $m_\cf(\delta_0)=0$ for $1\leq\delta_0\leq \delta$.
\end{lemma}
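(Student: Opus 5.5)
The plan is to reduce the count of pairs $(\lf,\rf)$ with bounded reversal weight to a count of independent ``local swaps'' at the corners of the polygon. First, using the convention that $\lf,\rf$ are permutations of the multisets $d_l(\Delta)$, $d_r(\Delta)$, observe that $\delta(\lf,\rf)$ splits as a sum of a right contribution depending only on $\rf$ and a left contribution depending only on $\lf$. So the generating function $\sum_{(\lf,\rf)}z^{\delta(\lf,\rf)}$ factors as a product of a right and a left series, and it suffices to treat $\rf$. For $\rf$, a permutation of the multiset with $c_r^i$ repeated $d_r^i$ times, the reversal weight is $\sum (r_j-r_i)$ over inversions. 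Decompose this by pairs of values: a reversal between values $c_r^a>c_r^b$ contributes $c_r^a-c_r^b=\sum_{i=a}^{b-1}(c_r^i-c_r^{i+1})$, so in particular every reversal involving non-adjacent blocks already costs at least the sum of two gaps.

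Second, with total weight at most $\delta$ and each block of length $d_r^i\geq\delta+1$, I claim every permutation of weight $\le\delta$ is obtained from the natural order by moving a few entries of value $c_r^{i+1}$ leftward into the tail of block $i$ (and entries from further blocks only across block boundaries), with no entry crossing an entire block. The reason is that crossing a whole block of length $\ge\delta+1$ costs at least $\delta+1$. Hence the permutation is determined, block boundary by block boundary, by the interleaving pattern near each boundary $i\,|\,i+1$. Restricting to adjacent-value reversals first, the interleaving near boundary $i$ is a lattice path, equivalently a partition fitting in a $d_r^i\times d_r^{i+1}$ box. The number of inversions equals the partition size, and each costs $c_r^i-c_r^{i+1}$. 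Since the box sides exceed $\delta$, the box constraint is invisible below degree $\delta+1$. The boundary-$i$ generating function is therefore $\prod_{v\ge1}(1-z^{v(c_r^i-c_r^{i+1})})^{-1}$ mod $z^{\delta+1}$.

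The main obstacle is showing that different boundaries do not interact and that reversals between non-adjacent values are accounted for. I would handle this by writing any reversal between values $c_r^a,c_r^b$ with $b\ge a+2$ as passing through the intermediate block, which has length $>\delta$; the weight then exceeds $\delta$, so such reversals are excluded. Likewise, the interleavings at boundaries $i$ and $i+1$ affect disjoint ends of block $i+1$ because that block is longer than $\delta$, so they are independent. This yields the product formula \eqref{eq:poly-swaps}, which is manifestly independent of $\df$. The widths play no role here; the spread conditions in \eqref{eq:poly-range} are used only through $d_r^i,d_l^j\ge\delta+1$.

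Finally, under \eqref{eq:poly-gap-range} every factor has smallest positive exponent $c_r^i-c_r^{i+1}\ge\delta+1$, and similarly on the left. So the product is $\equiv1$ mod $z^{\delta+1}$, which gives $m_\cf(0)=1$ and $m_\cf(\delta_0)=0$ for $1\le\delta_0\le\delta$.
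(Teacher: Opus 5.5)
Your proof is correct. The paper gives no argument of its own here: it imports the count from Liu--Osserman (Proposition~6.8 and the proofs of Corollaries~6.4--6.5 there), so your proposal is a self-contained replacement for that citation. Your structure is the natural direct proof:
\begin{itemize}
\item factor the series into a right part and a left part;
\item exclude reversals between classes $a$ and $b\geq a+2$, because each of the $d_r^{a+1}\geq\delta+1$ intermediate copies forms a reversal with one of the two entries;
\item identify the surviving permutations with tuples of two-letter shuffles at consecutive classes, each counted by partitions in a $d_r^i\times d_r^{i+1}$ box in the variable $z^{c_r^i-c_r^{i+1}}$.
\end{itemize}

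The one step you state too loosely is the independence of adjacent corners. Make it precise as follows.
\begin{itemize}
\item The restriction map to consecutive pairs of classes is injective, since all relative orders are then forced.
\item It is onto the tuples of shuffles with $\sum_i(c_r^i-c_r^{i+1})k_i\leq\delta$, where $k_i$ is the inversion number of the $i$-th shuffle. The copies of $c_r^{i+1}$ reversed with class $i$ form an initial segment of length at most $k_i$. Those reversed with class $i+2$ form a final segment of length at most $k_{i+1}$. Since $k_i+k_{i+1}\leq\delta<d_r^{i+1}$, these segments are disjoint, which is exactly what makes the induced relation transitive.
\end{itemize}

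Compared with the citation, your route makes the hypotheses transparent. It shows that only the strict monotonicity of the directions and the side-length bounds $d_r^i,d_l^j\geq\delta+1$ in \eqref{eq:poly-range} are used, not the width or corner-height conditions. It also avoids reconciling Liu--Osserman's conventions with the reversal weight $\delta(\lf,\rf)$ defined here.
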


\begin{lemma}\label{lem:poly-interior}
The interior count
	$K_e(\Delta(\cf,\df);q)$ has the following properties.
	\begin{enumerate}
		\item[(1)] 
		For fixed $\cf$ and $e\ge1$, there exists a polynomial
		$\mathcal K_e^{\cf}(\df;q)\in\qb[q^{\pm1}][\df]$
		of degree $2e$ in $\df$ that equals 
		$K_e\bigl(\Delta(\cf,\df);q\bigr)$,
		whenever equation \eqref{eq:poly-range} holds with $\delta=e$.
		
		\item[(2)] 
		For fixed $p_r,p_l$, there exists a polynomial $\mathcal K_e$ in both $\cf$ and
		$\df$, of degree $e$ in $\cf$, degree $2e$ in $\df$, and total degree $3e$, that
		agrees with the count, that is,
		$\mathcal K_e(\cf,\df;q)=K_e\bigl(\Delta(\cf,\df);q\bigr)$,
		whenever equations \eqref{eq:poly-range} and \eqref{eq:poly-gap-range} hold
		simultaneously with $\delta=e$.

		\item[(3)]
		For either construction and every $e\geq0$, the homogeneous part of
		length degree $2e$ is
		\begin{equation}\label{eq:poly-interior-leading}
		\bigl[\mathcal K_e^\cf\bigr]_{\df,2e}
		=\bigl[\mathcal K_e\bigr]_{\df,2e}
		=\frac1{e!}\left((q+4+q^{-1})\frac{\area(\Delta)}2\right)^e.
		\end{equation}
		The right-hand side is understood as a polynomial in the polygon coordinates.

	\end{enumerate}
	
	We set $\mathcal K_0^\cf=\mathcal K_0=K_0=1$, of degree zero.
\end{lemma}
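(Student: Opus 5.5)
We follow the Ardila--Block/G\"ottsche--Kikwai strategy: write $K_e$ as a sum over pairs $(\lf,\rf)$ of long-edge-graph counts, pass to the logarithm, and use that logarithmic coefficients are affine in the widths.

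\textbf{Step 1 (reduction to long edge graphs).} Take $\delta(\lf,\rf)=\delta_0\leq e$. An interior marked diagram of cogenus $e$ has $\mu^M=1^{d_b}$ attached at vertex $M$, so the last term of \eqref{eq:poly-defect} vanishes. It has top contacts $s_v$ with $\sum_v(v-1)s_v\leq e$, and its edges satisfy $\sum_e((j-i)w(e)-1)=e-\delta_0-\sum_v(v-1)s_v$. Short edges contribute zero cogenus. Deleting them leaves a long edge graph $G$ on $\{0,\ldots,M\}$; the vertex $0$ is used to encode the top ends, as in the $\epsilon_0$/weight-condition bookkeeping of \cite{gk16}. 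The short edges are recovered as the extension $\operatorname{ext}_\beta(G)$ with
\[
\beta_{j}=W_j(\lf,\rf),
\]
up to the fixed boundary corrections at $j=0$ and $j=M$. The interior marking positions then correspond exactly to $\beta$-extended orderings. This gives
\[
K_e(\Delta;q)=\sum_{(\lf,\rf)}\sum_{\sff}N^{(0)}_{\beta(\lf,\rf,\sff),\,e-\delta_0-\sum_v(v-1)s_v}(q).
\]
By Lemma~\ref{lem:poly-swaps}, the number of pairs with a given $\delta_0$ is $m_\cf(\delta_0)$, independent of $\df$. Moreover $W_j(\lf,\rf)$ differs from $W_j$ by at most $\delta_0$ (Lemma~\ref{lem: width1}), and the difference is supported near the corners of the polygon.

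\textbf{Step 2 (logarithm and templates).} For each fixed $\beta$, write $N_{\beta,\cdot}=\exp(Q_{\beta,\cdot})$ by \eqref{eq:poly-formal-log}. By Lemma~\ref{lem:poly-log}, $Q_{\beta,k}$ is a sum over shifted templates $\Gamma^{(s)}$ of cogenus $k$ of $\mult_q(\Gamma)\Phi_\beta(\Gamma^{(s)})$. This is affine in $\beta_{s},\ldots,\beta_{s+\ell-1}$ once the allowability $\beta_{j-1}\geq\lambda_j$ holds. Allowability is automatic because $W_j\geq 2e\geq\lambda_j$ by Lemma~\ref{lem:poly-defect}(2), since $\lambda_j(\Gamma)\leq\delta(\Gamma)$.

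Summing over the shift $s$ splits the contributions into two kinds. Shifts away from the ends and corners give sums $\sum_s(\text{affine in }W_s)$. Since $W_s$ is piecewise linear in $s$ with breakpoints at the corners, each such sum is quadratic in $\df$ (and linear in $\cf$) by Faulhaber-type summation over each linear piece. Shifts within distance $e$ of the ends or corners give finitely many terms, each affine in the local widths. The spread-out hypotheses \eqref{eq:poly-range} make these windows disjoint, so the corner corrections are uniform in $\df$.

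Hence $Q_k$ is a polynomial of degree $\leq 2$ in $\df$. Exponentiating, the coefficient of $z^e$ is a polynomial of $\df$-degree $\leq 2e$. Summing the finitely many $(\delta_0,\sff)$ terms gives (1).

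\textbf{Step 3 (directions and leading term).} For (2), under \eqref{eq:poly-gap-range} only $\lf^0,\rf^0$ survive ($m_\cf(0)=1$). The widths $W_j$ are then linear in $\cf$ with coefficients linear in the vertex index. The summation in Step 2 then makes $Q_k$ of $\cf$-degree $\leq 1$ and total degree $\leq 3$, so the $z^e$ coefficient has $\cf$-degree $\leq e$ and total degree $\leq 3e$.

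For (3), the top $\df$-degree part of $Q_1$ comes only from cogenus-one templates, namely a single weight-one edge of length 2 or a weight-two edge of length 1, summed over all shifts. Their $\Phi$ values are quadratic/linear in $\beta$, and summing $\binom{W_s}{2}$-type terms over $s$ yields $(q+4+q^{-1})\area(\Delta)/2$ to leading order. Terms of $Q_k$ with $k\geq2$ have $\df$-degree at most $2<2k$. Extracting $z^e$ from $\exp$ therefore leaves $\frac1{e!}(Q_1^{\mathrm{top}})^e$, which is \eqref{eq:poly-interior-leading}.

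\textbf{Main obstacle.} The hardest part is Step 1 together with the corner analysis: identifying the interior markings precisely with $\beta$-extended orderings for the correct $\beta$. This requires handling the top ends (the $\epsilon_0$/weight-condition variant $N^0$) and the bottom vertex, and checking that near corners the reversal-induced width deficits produce $\df$-independent corrections. I would handle this by following \cite[Section 6]{liu-osserman2018} verbatim for the width bookkeeping and inserting $\mult_q$ in place of the unrefined multiplicity, since none of the combinatorial steps use the specific form of the multiplicity.
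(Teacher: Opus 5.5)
Your reduction to long edge graphs with $\beta=(W_0(\lf,\rf),\ldots,W_{M-1}(\lf,\rf))$, the passage to the logarithm, the restriction to shifted templates with affine $\Phi$, and the exponentiation follow the paper's route. However, Step 3 rests on a wrong computation.

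\textbf{Step 3.} You assert that the $\Phi$-values of the two cogenus-one templates are ``quadratic/linear in $\beta$'' and that one sums ``$\binom{W_s}{2}$-type terms''. This contradicts Lemma~\ref{lem:poly-log}(2), which you invoked in Step 2. It would also destroy the degree count: a quadratic expression in $W_s$ summed over roughly $M$ shifts is cubic in $\df$, so $Q_1$ would have length degree three and part (1) would fail.

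A one-edge graph has only the trivial partition, so $\Phi_\beta=P_\beta$, and one reads off:
\begin{itemize}
\item for the weight-two edge $k\to k+1$, $P_\beta=\beta_k-1$ (it takes one of $\beta_k-1$ positions among the $\beta_k-2$ short edges, and $k\geq1$ because edges at $0$ have weight one);
\item for the weight-one edge $k\to k+2$, $P_\beta=\beta_k+\beta_{k+1}$.
\end{itemize}
Thus $Q_1=[2]_q^2\sum_{k=1}^{M-1}(W_k-1)+\sum_{k=0}^{M-2}(W_k+W_{k+1})$. Its length-degree-two part is $([2]_q^2+2)$ times that of $\sum_kW_k$. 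Using $2\sum_{k=0}^{M-1}W_k=\area(\Delta)+d_t-d_b$, this equals $(q+4+q^{-1})\area(\Delta)/2$. The $4$ is the middle coefficient $2$ of $[2]_q^2$ plus the two intervals available to the length-two edge.

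For fixed $\cf$ you must also note that pairs with $\delta_0>0$, which do occur, contribute length degree at most $2(e-\delta_0)<2e$.

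\textbf{Smaller points.}
\begin{itemize}
\item Your displayed formula in Step 1 double counts the top ends. If they are encoded as weight-one edges $0\to v$ of $G$ (which is what the weight condition in $N^{(0)}$ refers to), then $\delta(G)=e-\delta_0$, $\beta$ does not depend on $\sff$, and there is no separate sum over $\sff$. The $(\lf,\rf)$-part of $K_e$ is simply $[z^{e-\delta_0}]\sum_G\mult_q(G)P_\beta(G)z^{\delta(G)}$, and the various $\sff$ are absorbed by templates touching $0$.
\item The inequality $\lambda_j(\Gamma)\leq\delta(\Gamma)$ is false: a weight-two edge of length one has $\lambda=2$ and $\delta=1$. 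Use $\lambda_j\leq2\delta(\Gamma)$, which follows from $w\leq2(dw-1)$. Combine it with $W_j(\lf,\rf)\geq2e-\delta_0\geq2(e-\delta_0)$ and, at the vertex $0$, with $\lambda_1\leq e-\delta_0\leq d_t$.
\item Your corner-window analysis is unnecessary and, as sketched, incomplete. The relative order of the right and left corner heights changes inside the range \eqref{eq:poly-range}. So piecewise Faulhaber sums do not by themselves give a single polynomial, and ``uniform corner corrections'' is asserted rather than shown.
\end{itemize}

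\textbf{How the paper avoids corners.} Because $\Phi$ is affine in each $\beta_j$, the shift sum equals $\sum_mW_m(\lf,\rf)$ minus the first $a_\Gamma+j-1$ and last $\ell-j$ terms. The identity $\sum_mW_m(\lf,\rf)=\sum_mW_m-\delta_0$ absorbs every corner deficit at once. The total $\sum_mW_m$ has the chamber-independent closed form \eqref{eq:dadi-theta}. The end terms are explicit because the first and last $h$ entries of $\lf,\rf$ are forced to be natural. This also shows that the contribution of $(\lf,\rf)$ depends only on $\delta_0$, which is what justifies weighting by $m_\cf(\delta_0)$.
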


\begin{proof}
(1).
Suppose that $\cf,\df$ are sufficiently large and spread out for $\delta=e$.
Let $M$ be the height of the $h$-transverse polygon $\Delta(\cf,\df)$.

The proof of the correspondence follows the argument in the proof of \cite[Theorem 20]{gk16}.
Fix a pair of side
permutations $(\lf,\rf)$ with reversal weight
$\delta_0=\delta(\mathbf l,\mathbf r)\in[0,e]$.
The construction below gives a bijection between the interior marked floor diagrams contributing to $K_e(\Delta(\cf,\df);q)$ with permutations $(\lf,\rf)$ and {pairs $(G, ord)$ consisting of a $\beta$-allowable graph $G\in\mathcal G(\lf,\rf)$ of cogenus $e-\delta_0$ and a $\beta$-extended ordering $ord$ of $G$, up to equivalence}. Here $\mathcal G(\lf,\rf)$ {denotes all long edge graphs} supported on $\{0,\ldots,M\}$ whose edges incident to $0$ have weight one and $\beta$ is defined by 
$
\beta=(W_0(\mathbf l,\mathbf r),W_1(\mathbf l,\mathbf r),\dots,W_{M-1}(\mathbf l,\mathbf r))$.

Let $\mathcal{D}$ be a $(\emptyset,1^{d_b})$-marked floor diagram and let $\widetilde{\mathcal{D}}$ be the graph obtained from
the interior marked floor diagram of $\dl$ by {forgetting the subdivision vertices}.
Add a vertex $0$ before all floors of $\wt\dl$. Each indistinguishable vertex corresponding to the sequence $\sff$ of $\wt\dl$ and attached to a floor $v$ becomes the subdivision vertex of the edge $0\to v$.
Delete all short edges of the augmented graph, and one can obtain a long edge graph $G$. 
Since the sum of the number of deleted short edges adjacent to $j-1$, $j$ and $\lambda_j(G)$ is $d_t+\sum_{i=1}^{j-1}(r_i-l_i)$, we know the number of deleted short edges adjacent to $j-1$ and $j$
is equal to $W_{j-1}(\lf,\rf)-\lambda_j(G)$.
Hence $G$ is a $\beta$-allowable long edge graph on
$\{0,\dots,M\}$ with
all edges incident to $0$ of weight one.
Note that 
$\delta(G)=\sum_{a:i\to j}\bigl((j-i)w(a)-1\bigr)
+\sum_{v=1}^M(v-1)s_v$, 
where the first sum is over edges $a:i\to j$ with $i>0$. From equation (\ref{eq:poly-defect}), we know that
the long graph $G$ has cogenus $e-\delta(\lf,\rf)=e-\delta_0$.
The ordering $ord$ is determined by the total order on vertices of $\dl$.

Let $G$ be a $\beta$-allowable long edge graph with a $\beta$-extended ordering. 
We glue $W_{j-1}(\mathbf l,\mathbf r)-\lambda_j(G)$ short edges to $G$ adjacent to the vertices $j-1$ and $j$. Then, we replace the subdivision vertices of the edges emanating from $0$ with top marking vertices, delete $0$, and add new subdivision vertices. This gives the inverse construction.
Markings are counted by $P_{\beta}(G)$, and the multiplicity is unchanged.

Fix $(\lf,\rf)$ with $\delta(\lf,\rf)=\delta_0$ and $\beta$, {and then consider} the formal series
\begin{equation}\label{eq:S}
S(\lf,\rf;z)=1+\sum_{\delta>0}N_{\beta,\delta}^0(q)z^\delta=\sum_{G}\mult_q(G)P_\beta(G)z^{\delta(G)},
\end{equation}
where the second sum is over all long edge graphs whose edges incident to $0$ have weight one, including the empty graph. 
Recall that $P_\beta(G)$ records the marking data and is zero for nonallowable graphs.
Note that the coefficient 
$[\,z^{\,e-\delta_0}\,]S(\mathbf l,\mathbf r;z)$ is the $(\mathbf l,\mathbf r)$-part of $K_e$.

From equation~\eqref{eq:poly-formal-log}, we obtain
$$
\log S(\mathbf l,\mathbf r;z)
=\sum_{h\geq1}Q_{\beta,h}^0(q)z^h
=\sum_{\substack{G\in\mathcal G(\mathbf l,\mathbf r)\\G\neq\varnothing}}
\operatorname{mult}_q(G)\,\Phi_\beta(G)\,z^{\delta(G)}.
$$ 
To compute $[z^{e-\delta_0}]S$, only terms of cogenus at most $e-\delta_0$ are needed. Lemma~\ref{lem:poly-log} removes non-template graphs, leaving finitely many templates and their shifts.

Every graph $G\in\mathcal G(\lf,\rf)$ of cogenus at most $e-\delta_0$, and every graph in a partition of $G$, is $\beta$-allowable. Indeed, a nonshort edge of length $d$ and weight $w$ satisfies $w\leq2(dw-1)$, so $\lambda_j(G)\leq2(e-\delta_0)$ for $j\geq2$. Lemmas \ref{lem: width1} and \ref{lem:poly-defect}(2) give $W_{j-1}(\lf,\rf)\geq2e-\delta_0\geq2(e-\delta_0)$. At $0$, every incident edge has weight one and length at least two, so $\lambda_1(G)\leq e-\delta_0\leq d_t$. Passing to a subgraph only decreases these bounds.
From Lemma \ref{lem:poly-log}, we have the following:
\begin{enumerate}
\item if $G$ is not a translate of a template, then
$\Phi_{\beta}(G)=0$, so such graphs have no contribution;
\item if $G$ is a template $\Gamma$ of length $\ell$ with vertex set $\{0,\dots,\ell\}$,
the shift $k$ ranges over $a_\Gamma:={1-\epsilon_0(\Gamma)}\le k\le M-\ell$, and the shift preserves cogenus
and multiplicity.
For $\delta(\Gamma)\leq e-\delta_0$, from Lemma \ref{lem:poly-log}(2), we have
$\Phi_\beta(\Gamma^{(k)})=\eta_0(\Gamma)+\sum_{j=1}^{\ell}\eta_j(\Gamma)W_{k+j-1}(\lf,\rf)$.
\end{enumerate}
Hence, $\log S(\mathbf l,\mathbf r;z)$ is given by
\begin{equation}\label{eq:logS}
\begin{aligned}
\log S(\mathbf l,\mathbf r;z)
{\equiv}\sum_\Gamma\sum_{k=a_\Gamma}^{M-\ell}\mult_q(\Gamma)z^{\delta(\Gamma)}
\Bigl({\eta_0(\Gamma)}+\sum_{j=1}^{\ell}{\eta_j(\Gamma)}W_{k+j-1}(\lf,\rf)\Bigr)\mod z^{e-\delta_0+1},
\end{aligned}
\end{equation}
where the first sum is over the finite set of templates with
${1\leq\delta(\Gamma)\leq e-\delta_0}$.
For fixed $j$, the index $k+j-1$ runs over
$[a_\Gamma+j-1,\ M-\ell+j-1]$, so we have
$$
\sum_{k=a_{\Gamma}}^{M-\ell}W_{k+j-1}(\mathbf l,\mathbf r)
=\sum_{m=0}^{M-1}W_m(\mathbf l,\mathbf r)
-\sum_{m=0}^{a_\Gamma+j-2}W_m(\mathbf l,\mathbf r)
-\sum_{m=1}^{\ell-j}W_{M-m}(\mathbf l,\mathbf r).
$$

By Lemma \ref{lem: width1},
$\sum_{m=0}^{M-1}W_m(\mathbf l,\mathbf r)=\sum_{j=0}^{M-1}W_j-\delta_0$. Note that $\sum_{j=0}^{M-1}W_j=\sum_{j=0}^{M-1}(d_t+\sum_{v=1}^j(r_v^0-l_v^0))$,
where $\lf^0,\rf^0$ are the naturally ordered permutations.
An entry $c_r^i$ in the $v$-th position of the sequence $\rf^0$ occurs in exactly $M-v$ of the widths $W_0,\ldots,W_{M-1}$.
The entries equal to $c_r^i$ in $\rf^0$, starting after the $H$-th position up to the $(H+d)$-th position, contribute $c_r^i\sum_{v=1}^d(M-H-v)=c_r^i\left(d(M-H)-\binom{d+1}{2}\right)$. The same computation applies to the sequence $\lf^0$.
Hence, we have
\begingroup\small
\begin{equation}\label{eq:dadi-theta}
	\begin{aligned}
		\sum_{j=0}^{M-1}W_j
		={}Md_t
		+\sum_{i=1}^{p_r} c_r^i\left(d_r^i(M-\sum_{v=1}^{i-1}d_r^v)-\binom{d_r^i+1}{2}\right)
		-\sum_{j=1}^{p_l} c_l^j\left(d_l^j(M-\sum_{v=1}^{j-1}d_l^v)-\binom{d_l^j+1}{2}\right).
	\end{aligned}
\end{equation}
\endgroup
So $\sum_{j=0}^{M-1}W_j$ has length degree at
most two, direction degree at most one, and finitely many terms.

Let $h=\delta(\Gamma)\le e-\delta_0$.
The first and last $h$ entries of $\lf$ and $\rf$ must be the first and last $h$ entries of the {naturally ordered sequences $\lf^0$ and $\rf^0$}, respectively. Indeed, let $a$ be an entry different from the first (resp. last) $h$ entries of the {naturally ordered sequence $\rf^0$}.
From equation \eqref{eq:poly-range}, every entry occurs at least $e+1$ times in $\rf$.
If $a$ is in the first (resp. last) $h$ positions of $\rf$, there must be at least $e+1-h$ entries greater (resp. smaller) than $a$, but sitting after (resp. before) $a$ in $\rf$.
This will create at least $\delta_0+1$ reversal pairs,
contradicting the total reversal weight $\delta(\lf,\rf)=\delta_0$.
A similar argument applies to $\lf$. Hence, for ${0\leq m\leq h}$, we have
$W_m(\mathbf l,\mathbf r)=d_t+m\,(c^1_r-c^1_l)$, 
$W_{M-m}(\mathbf l,\mathbf r)=d_b-m\,(c^{p_r}_r-c^{p_l}_l)$.
Since $1\leq j\leq\ell$ and $\ell-1\leq h$, we have $a_\Gamma+j-2\leq\ell-1\leq h$ and $\ell-j\leq\ell-1\leq h$. The bound $\ell-1\leq h$ follows because the edges of a template strictly cover all its $\ell-1$ interior vertices.
Then,
$$
\begin{aligned}
\sum_{m=0}^{a_\Gamma+j-2}W_m(\mathbf l,\mathbf r)
&=(a_\Gamma+j-1)\,d_t+(c^1_r-c^1_l)\tbinom{a_\Gamma+j-1}{2},\\
\sum_{m=1}^{\ell-j}W_{M-m}(\mathbf l,\mathbf r)
&=(\ell-j)\,d_b-(c^{p_r}_r-c^{p_l}_l)\tbinom{\ell-j+1}{2}.
\end{aligned}
$$ 
Finally, we obtain
\begin{equation}\label{eq:dadi-trans}
	\begin{aligned}
		\sum_{k=a_\Gamma}^{M-\ell}W_{k+j-1}(\lf,\rf)
		={}&\sum_{j=0}^{M-1}W_j-\delta_0-(a_\Gamma+j-1)d_t
		-(c_r^1-c_l^1)\binom{a_\Gamma+j-1}{2}\\
		&-(\ell-j)d_b+(c_r^{p_r}-c_l^{p_l})\binom{\ell-j+1}{2}.
	\end{aligned}
\end{equation}

Substituting equation {\eqref{eq:dadi-trans}} into equation \eqref{eq:logS} {and grouping templates by cogenus, we obtain}
\begingroup\small
$$
\begin{aligned}
&\log S(\mathbf l,\mathbf r;z)\\
&\equiv\sum_{h=1}^{e-\delta_0}z^h\sum_{\Gamma:\,\delta(\Gamma)=h}\mult_q(\Gamma)
\left((M-\ell-a_\Gamma+1){\eta_0(\Gamma)}
+\sum_{j=1}^{\ell}{\eta_j(\Gamma)}\mathcal W_j(\Gamma;\delta_0)\right)
\mod z^{e-\delta_0+1},
\end{aligned}
$$
\endgroup
where $\mathcal W_j(\Gamma;\delta_0)$ 
denotes the right-hand side of equation~\eqref{eq:dadi-trans}.
Since $S(\mathbf l,\mathbf r;z)=\exp(\log S(\mathbf l,\mathbf r;z))$, by extracting the coefficient of $z^{e-\delta_0}$, 
we obtain the finite coefficient formula:
\begin{equation}\label{eq:poly-interior}
\begin{aligned}
\mathcal K^{\mathbf c}_e(\mathbf d;q)
=\sum_{\delta_0=0}^{e}m_{\mathbf c}(\delta_0)&[z^{e-\delta_0}]
\exp\Bigl(\sum_{h=1}^{e}z^h\sum_{\Gamma:\,\delta(\Gamma)=h}\mult_q(\Gamma)\\
&\cdot{\Bigl((M-\ell-a_\Gamma+1)\eta_0(\Gamma)
+\sum_{j=1}^{\ell}\eta_j(\Gamma)\mathcal W_j(\Gamma;\delta_0)\Bigr)}\Bigr),
\end{aligned}
\end{equation}
where $m_{\mathbf c}(\delta_0)$ is the number of $\lf,\rf$ with reversal
weight $\delta_0$. By Lemma \ref{lem:poly-swaps}, this number is independent of $\df$.

It remains to justify the identification with $K_e$. From \eqref{eq:S}, the coefficient $[\,z^{\,e-\delta_0}\,]S(\mathbf l,\mathbf r;z)$ is the $(\mathbf l,\mathbf r)$-part of $K_e$.
{From the equation for $\log S(\mathbf l,\mathbf r;z)$ above}, the coefficient $[\,z^{\,e-\delta_0}\,]S(\mathbf l,\mathbf r;z)$ depends on $(\mathbf l,\mathbf r)$ only through $\delta_0$.
Hence, all $m_{\mathbf c}(\delta_0)$ permutations with the
same reversal weight $\delta_0$ contribute identically, 
and summing over $\delta_0$ with the multiplicity $m_{\mathbf c}(\delta_0)$ reproduces the full count:
$$
K_e(\Delta;q)={\sum_{\substack{(\lf,\rf)\\\delta(\lf,\rf)\leq e}}}
[\,z^{\,e-\delta(\mathbf l,\mathbf r)}\,]\,S(\mathbf l,\mathbf r;z)
=\mathcal K^{\mathbf c}_e(\mathbf d;q).
$$

In the right-hand side of equation \eqref{eq:poly-interior}, ${\eta_j(\Gamma)}$ are
{rational constants, and} $M-\ell-a_\Gamma+1$ is 
linear in the length data, since $M=\sum_i d_r^i$.
$\sum_{j=0}^{M-1}W_j-\delta_0$ and the right-hand side of equation \eqref{eq:dadi-trans} have length degree
at most two and direction degree at most one. 
Since the set of templates with
$\delta(\Gamma)\le e$ is finite and the coefficient extraction is finite, the
right-hand side of {\eqref{eq:poly-interior}} lies in $\mathbb Q[q^{\pm1}][\mathbf d]$.
Each $z^h$-coefficient of the exponent has length degree at most two.
A product contributing to $z^{e-\delta_0}$ has at most $e-\delta_0$
factors, since every factor has positive $z$-degree. This gives the
length-degree bound $2e$ {in statement (1)}.

(2).
Under the direction-gap hypotheses {\eqref{eq:poly-gap-range}},
{Lemma~\ref{lem:poly-swaps}} gives $m_{\mathbf c}(0)=1$ and
$m_{\mathbf c}(\delta_0)=0$ for $\delta_0>0$, and the count lies in
$\qb[q^{\pm1}][\cf,\df]$.
Each $z^h$-coefficient of the exponent has length degree at most two, direction degree at most one, and joint degree at most three. A product contributing to $z^{e-\delta_0}$ has at most $e-\delta_0$ factors, since every factor has positive $z$-degree. This gives the bounds $2e$, $e$, and $3e$.

(3). We calculate the highest length-degree contribution to the interior
polynomial of statement (1) and statement (2). In equation \eqref{eq:poly-interior}, every coefficient in
the exponent has length degree at most two. The terms with positive
reversal weight $\delta_0$ have length degree at most $2(e-\delta_0)$.
For the natural side orders, let $Q_1$ be the coefficient of $z$ in
the exponent. Only $\delta_0=0$ and a product of $e$ cogenus-one
contributions can have length degree $2e$.

The two templates of cogenus one are a length-one edge of weight two
and a length-two edge of weight one.  Computing these two contributions gives
$$
\begin{aligned}
Q_1&=(q+2+q^{-1})\sum_{k=1}^{M-1}(W_k-1)
+\sum_{k=0}^{M-2}(W_k+W_{k+1})\\
&=(q+2+q^{-1})\bigl(\sum_{k=0}^{M-1}W_k-d_t-(M-1)\bigr)
+2\sum_{k=0}^{M-1}W_k-d_t-d_b+c_r^{p_r}-c_l^{p_l}.
\end{aligned}
$$
Decompose the polygon $\Delta$ into horizontal strips of height one, 
and compute the strip areas using the trapezoid formula. Then we have
$\area(\Delta)=2\sum_{k=0}^{M-1}W_k-d_t+d_b$. Therefore,
\[
[Q_1]_{\df,2}
=([2]_q^2+2)\frac{\area(\Delta)}2
=(q+4+q^{-1})\frac{\area(\Delta)}2.
\]
Thus the above coefficient implies equation \eqref{eq:poly-interior-leading}. Finally, $e=0$ forces {naturally ordered permutations} and the empty long edge
graph, whose extended ordering is unique, so $K_0=1$.
\end{proof}

\subsubsection{Relative decomposition and boundary markings}

We separate a relative marked floor diagram into an interior marked floor diagram and an extended template.
The construction follows the extended template decomposition of
\cite[Section~3]{Block-2012} closely. 

Let $\Delta=\Delta(\cf,\df)$ be an $h$-transverse polygon where $\cf,\df$ are sufficiently large and spread out for $\delta>0$. For $0\leq\ell\leq\delta$, let $\Delta^{(\ell)}$ be the polygon obtained by
deleting the bottom $\ell$ rows of $\Delta$. Its height is $M-\ell$, its bottom length is $d_b^{(\ell)}=W_{M-\ell}=d_b-\ell(c_r^{p_r}-c_l^{p_l})$, and its last right and left side lengths are $d_r^{p_r}-\ell$ and $d_l^{p_l}-\ell$, respectively.

Fix two partitions $\eta,\mu$ with $|\eta|+|\mu|=d_b$.
We put
$$
a_w=m_w(\eta),\qquad b_w=m_w(\mu).
$$
Let $(\Lambda,A,B)$ be an extended template with length $\ell=l(\Lambda)$.
The extended template $(\Lambda,A,B)$ is called \emph{$(\eta,\mu)$-admissible} if
$\sum_{i\geq1}a_{iw}\leq a_w$, $\sum_{i\geq1}b_{iw}\leq b_w$ for any $w\geq1$.
We use the weighted lower sums introduced in \cite[Section~3]{Block-2012}
$$
\operatorname{wls}(A)_i=\sum_{i'\geq i,\,w\geq1}w\,a_{i'w},\qquad
\operatorname{wls}(B)_i=\sum_{i'\geq i,\,w\geq1}w\,b_{i'w}.
$$
Given an extended template $(\Lambda,A,B)$ with length $\ell$ and cogenus $\delta(\Lambda,A,B)\leq\delta$,  we define its \emph{short edge capacities} by
\begin{equation}\label{Eqt:def-Sj}
\begin{aligned}
S_j={}d_b-(\ell-j+1)(c_r^{p_r}-c_l^{p_l})
-\lambda_j(\Lambda)
-\operatorname{wls}(A)_{\ell+1-j}
-\operatorname{wls}(B)_{\ell+1-j},
\end{aligned}
\end{equation}
for any $1\leq j\leq\ell$.
Note that the short edge capacities $S_j$ are nonnegative.
Indeed, for $\ell>0$, we have $1\leq M-\ell+j-1<M$. By Lemma~\ref{lem:poly-defect}\textup{(2)},
$W_{M-\ell+j-1}\geq2\delta$. Each nonshort edge of length $d$ and weight $w$ satisfies $w\leq2(dw-1)$, 
so $\lambda_j(\Lambda)\leq2\delta(\Lambda)$. Moreover,
$\operatorname{wls}(A)_{\ell+1-j}
+\operatorname{wls}(B)_{\ell+1-j}
\leq\delta(A)+\delta(B)$.
It follows that
$S_j\geq2\delta-2\delta(\Lambda)-\delta(A)-\delta(B)\geq0$.

For an $(\eta,\mu)$-admissible template $(\Lambda,A,B)$ and a polygon $\Delta$, 
{we construct the associated \emph{terminal diagram}} as follows.
Translate each edge $i\to j$ of $\Lambda$ to $M-\ell+i\to M-\ell+j$,
keeping its weight, and insert $S_j$ short edges of weight one from
$M-\ell+j-1$ to $M-\ell+j$ for $1\leq j\leq\ell$.
Attach $a_{iw}$ (resp. $b_{iw}$) edges of weight $w$ to the 
floor $M-i$, pointing from the floor $M-i$ to $a_{iw}$ (resp. $b_{iw}$) different $A$-vertices (resp. $B$-vertices). 
Attach $(a_w-\sum_{i\geq1}a_{iw})$ (resp. $(b_w-\sum_{i\geq1}b_{iw})$)
edges of weight $w$ to floor $M$, pointing from the floor $M$ to $(a_w-\sum_{i\geq1}a_{iw})$ (resp. $(b_w-\sum_{i\geq1}b_{iw})$) different $A$-vertices (resp. $B$-vertices). 
Subdivide every edge connecting two floors by a subdivision vertex.

A \emph{compatible marking} of the $(\eta,\mu)$-admissible template $(\Lambda,A,B)$ and the polygon $\Delta$ is a total order on the set consisting of the floors
$M-\ell,\ldots,M$, the subdivision vertices, and {the $A$-vertices and $B$-vertices} of the terminal diagram, compatible with the orientation of the terminal diagram. {It is also required that} the $A$-vertices are the largest among all vertices, and that for every pair of $A$-vertices $i'>i$, the weight of the edge adjacent to $i'$ is greater than or equal to the weight of the edge adjacent to $i$.
Two such markings are equivalent if they are related by an order-preserving isomorphism
fixing the floors and preserving weights and vertex types.

\begin{lemma}\label{lem:poly-decomp}
Let $\Delta=\Delta(\cf,\df)$ be an $h$-transverse polygon where $\cf,\df$ are sufficiently large and spread out for $\delta>0$. 
Then there is a bijection from the set of $(\eta,\mu)$-marked $\Delta$-floor diagrams of cogenus $\delta$ 
to the set of triples consisting of
\begin{enumerate}[$(i)$]
\item an $(\eta,\mu)$-admissible extended template
$T=(\Lambda,A,B)$ with $\delta(T)\leq\delta$, of length
$\ell=l(\Lambda)$;
\item an interior marked $\Delta^{(\ell)}$-floor diagram of
cogenus $e=\delta-\delta(T)$;
\item a compatible marking of $T$ and $\Delta$.
\end{enumerate}
In the bijection, the interior diagrams are precisely those counted
by $K_e(\Delta^{(\ell)};q)$. Moreover, if $e>0$, the truncated polygon
$\Delta^{(\ell)}$ satisfies condition \eqref{eq:poly-range} with $\delta$ replaced
by $e$.
\end{lemma}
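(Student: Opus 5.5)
We will follow the extended template decomposition of \cite[Section~3]{Block-2012}, adapted to $h$-transverse polygons as in the proof of Lemma~\ref{lem:poly-interior}. Given an $(\eta,\mu)$-marked $\Delta$-floor diagram $\wt\dl$ of cogenus $\delta$, first use Lemma~\ref{lem:poly-defect}(1). Every term in \eqref{eq:poly-defect} is nonnegative, so at most $\delta$ boundary contacts sit on floors $v<M$, each at floor $M-i$ with $i\leq\delta$. Likewise the long edges, the nonzero $s_v$ with $v>1$, and the reversal pairs contribute at most $\delta$ in total. We then determine the cut level $\ell$ as follows. Take the smallest $\ell\geq0$ such that no long edge of $\dl$ (after removing short edges) strictly covers the vertex $M-\ell$. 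Among those, require in addition that all contacts on floors $M-i$ with $i>\ell$ be absent, i.e.\ $\ell\geq\max\{l(A),l(B)\}$. We choose $\ell$ to be the minimal value with the covering property of Definition~\ref{def:poly-terminal}.

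The construction then proceeds in three steps.
\begin{itemize}
\item Define $\Lambda$ to be the long edges of $\dl$ with both endpoints in $\{M-\ell,\ldots,M\}$, shifted by $-(M-\ell)$.
\item Define $a_{iw}$ (resp.\ $b_{iw}$) to be the number of $\eta$- (resp.\ $\mu$-) edges of weight $w$ attached to floor $M-i$, for $i\geq1$.
\item Define the interior diagram to be the restriction of $\wt\dl$ to floors $1,\ldots,M-\ell$. Its outgoing weight at floor $M-\ell$ is converted into free simple bottom contacts $\mu=(1^{d_b^{(\ell)}})$, and these vertices are deleted.
\end{itemize}
The total order splits accordingly into an interior order and a compatible marking. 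The relative order between the two parts is forced, since no edge crosses the cut other than the short edges and $M-\ell$ is a common floor.

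To check that this is well defined, we need three facts. First, the reversals of $\lf,\rf$ and the nonzero $s_v$ lie entirely in the interior part. By the argument in the proof of Lemma~\ref{lem:poly-interior}, the last $\delta$ entries of $\lf,\rf$ agree with the natural order. This gives $r_v-l_v=c_r^{p_r}-c_l^{p_l}$ for $v>M-\delta$. Second, $s_v=0$ near the bottom, because $(v-1)s_v\leq\delta<M-\delta$. Third, the number of short edges between $M-\ell+j-1$ and $M-\ell+j$ is exactly the capacity $S_j$ of \eqref{Eqt:def-Sj}; this follows from the divergence computation in the proof of Lemma~\ref{lem:poly-defect}.

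The cogenus then splits as $\delta=e+\delta(T)$. Long edges inside the template contribute $\delta(\Lambda)$, and the contact terms $(M-v)(|\eta^v|+|\mu^v|)$ with $v=M-i$ contribute $\delta(A)+\delta(B)$. The remaining terms of \eqref{eq:poly-defect} are exactly the cogenus of the interior diagram, by \eqref{eq:poly-defect} applied to $\Delta^{(\ell)}$ with all contacts on its last floor. Here one must check that the bottom term vanishes, since $M^{(\ell)}-v=0$ there, and that the lattice-point counts match. Nonnegativity of $S_j$, already shown, gives the inverse map: glue a terminal diagram to the interior diagram along floor $M-\ell$. The short edges out of floor $M-\ell$ in the interior diagram correspond to the $d_b^{(\ell)}$ free simple contacts, which matches $S_1$ plus the template edges at $0$. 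Then merge the orders.

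It remains to verify the range condition \eqref{eq:poly-range} for $\Delta^{(\ell)}$ with $e$. Since $\ell\leq\delta(T)=\delta-e$ by \eqref{eq:poly-term-bound}, the bounds follow. The last side lengths satisfy $d_r^{p_r}-\ell\geq\delta+1-\ell\geq e+1$. The partial sum gaps are unchanged for indices below $p_r,p_l$. For the bottom, we have $d_b^{(\ell)}=W_{M-\ell}\geq2\delta\geq e$ by Lemma~\ref{lem:poly-defect}(2) when $\ell>0$. The condition $d_b^{(\ell)}+c_r^{p_r}-c_l^{p_l}=W_{M-\ell+1}\geq 2e$ also holds, with the case $\ell=0$ being trivial.

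The main obstacle is the uniqueness of $\ell$ and showing that the cut yields a genuine extended template: the minimal-$\ell$ choice must satisfy the covering condition while $\ell$ stays below $\delta$, so that the natural-order and width estimates apply. I would handle this first by proving $\ell\leq\delta(T)$ directly from \eqref{eq:poly-term-bound}, so that all estimates are available before verifying the bijection.
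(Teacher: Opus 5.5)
Your decomposition is the paper's. Your cut floor $M-\ell$ is its $b$. The template $(\Lambda,A,B)$, the interior diagram on floors $1,\dots,M-\ell$ with $d_b^{(\ell)}$ simple contacts, the cogenus split through \eqref{eq:poly-defect}, and the truncation estimates are also the same.

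However, two of your supporting claims are false as written.

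First, the last $\delta$ entries of $\lf,\rf$ need not be natural. The argument in the proof of Lemma~\ref{lem:poly-interior} controls the last $h$ entries only when $h+\delta(\lf,\rf)$ is at most the cogenus. For example, take $p_r=2$, $c_r^1-c_r^2=1$, $d_r^2=\delta+1$ and $\delta\geq2$. Moving one copy of $c_r^1$ to the $\delta$-th position from the end costs reversal weight only $2\leq\delta$. What is true, and all you actually use, is that the last $\ell$ entries are natural. If one of them differed from $c_r^{p_r}$, at least $d_r^{p_r}-\ell+1\geq\delta+2-\ell$ copies of $c_r^{p_r}$ would precede it. But \eqref{eq:poly-defect} together with \eqref{eq:poly-term-bound} gives $\delta(\lf,\rf)\leq\delta-\delta(T)\leq\delta-\ell$. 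The left side is handled the same way.

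Second, the step ``$(v-1)s_v\leq\delta<M-\delta$'' needs $M>2\delta$, which \eqref{eq:poly-range} does not give: $p_r=p_l=1$ with $M=d_r^1=\delta+1$ is allowed. The paper avoids this by putting the auxiliary top edges $0\to j$ into the long edge graph before choosing $b$. A top contact at a floor $v>b$ would then strictly cover $b$, and $b=M-\ell\geq M-\delta\geq1$, so $s_v=0$ below the cut automatically. If you keep top edges out of the covering test, argue instead as follows. If $s_v>0$ with $v>M-\ell$, then $\delta\geq\delta(T)+(v-1)\geq\ell+(M-\ell)=M\geq\delta+1$, a contradiction.

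There are also some smaller points.
\begin{itemize}
\item State the cut as: $\ell$ is the least integer $\geq\max\{l(A),l(B)\}$ such that $M-\ell$ is not strictly covered. The template covering condition then follows from minimality. ``Minimal $\ell$ with the covering property'' by itself always returns $\ell=\max\{l(A),l(B)\}$, since the condition is then vacuous, and it would lose edges crossing $M-\ell$.
\item At floor $M-\ell$, \eqref{Eqt:def-Sj} gives $d_b^{(\ell)}=S_1+\lambda_1(\Lambda)+\sum_w w(a_{\ell w}+b_{\ell w})$. So the row-$\ell$ contacts must be counted alongside $S_1$ and the template edges at $0$.
\item For $\ell=1$, the bound $W_{M-\ell+1}=d_b\geq 2e$ is not covered by Lemma~\ref{lem:poly-defect}(2), which only treats $1\leq j<M$. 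Split by the sign of $c_r^{p_r}-c_l^{p_l}$, as the paper does.
\item In the inverse direction, nonnegativity of $S_j$ only shows that the terminal diagram exists. You still need the floor-by-floor divergence check at $M-\ell$, at $M-\ell+j$, and at $M$; this is your third fact read backwards.
\end{itemize}
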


\begin{proof}
Let $\wt\dl$ be an $(\eta,\mu)$-marked $\Delta$-floor diagram
of cogenus $\delta$. Let $\dl'$ be the graph at the end of Step~2 of
Definition~\ref{def:relative-marking}.
Adjoin a vertex $0$ and connect it to the $s_j$ indistinguishable vertices by edges of weight one
so that these edges become directed edges $0\to j$. The original $s_j$ indistinguishable vertices
become the subdivision vertices of these auxiliary edges.
Omit the edges adjacent to the $\eta$-vertices and $\mu$-vertices and all short edges. 
The result is a long edge graph $G$ on $\{0,\ldots,M\}$.
For $1\leq i<M$ and $w\geq1$, let
$$
a_{iw}=m_w(\eta^{M-i}),\qquad
b_{iw}=m_w(\mu^{M-i}),
$$
and set the remaining rows of $A$ and $B$ to zero. Thus
$M-\max\{l(A),l(B)\}$ is the earliest floor of $\dl'$ connecting to an $\eta$-vertex or $\mu$-vertex, unless $A=B=0$. Let
$$
b=\max\bigl\{v\in\{0,\ldots,M-\max(l(A),l(B))\}:
\text{$v$ is not strictly covered by an edge of $G$}\bigr\}.
$$
This set contains $0$. No edge of $G$ strictly crosses $b$, and no
$\eta$-vertex or $\mu$-vertex is connected to a floor less than $b$ in $\dl'$. 
Let $\Lambda$ be the graph consisting of the edges of $G$ with endpoints in $\{b,\ldots,M\}$,
translated by $-b$, and put $\ell=M-b$.
By maximality of $b$, each vertex
$1\leq j\leq\ell-\max\{l(A),l(B)\}$ is strictly covered by an edge of
$\Lambda$. Hence $T=(\Lambda,A,B)$ is an extended template. The
cogenus formula \eqref{eq:poly-defect} and the fact $l(\Lambda)\leq\delta(\Lambda,A,B)$ imply
$\ell\leq\delta(T)\leq\delta$.
If $\delta>0$, condition \eqref{eq:poly-range} implies
$M\geq\delta+1$, so $b\geq1$. If $\delta=0$, the template is empty
and $b=M\geq1$. A top edge $0\to v$ with $v>b$ would strictly cover $b$. Consequently $s_j=0$ for every $j>b$.

We claim that the last $\ell$ entries of $\rf$ and $\lf$ equal
$c_r^{p_r}$ and $c_l^{p_l}$, respectively. This is vacuous for
$\ell=0$. Otherwise, if one of the last $\ell$ right entries
differed from $c_r^{p_r}$, at least
$d_r^{p_r}-(\ell-1)\geq\delta+2-\ell$
copies of the smallest right direction would precede it. Each
would contribute at least one to the reversal cogenus $\delta(\lf,\rf)$, whereas
equation \eqref{eq:poly-defect} implies that
$\delta(\lf,\rf)\leq\delta-\delta(T)\leq\delta-\ell$.
This is a contradiction. The left side follows in the same way, since
$c_l^{p_l}$ is the largest left direction. Thus deleting the final
$\ell$ entries of $\lf,\rf$ produces side permutations for $\Delta^{(\ell)}$ without
changing the reversal cogenus. In particular, we have $W_b(\lf,\rf)=W_b=d_b^{(\ell)}$.

Let $\wt\dl'$ be the graph consisting of the floors $1,\ldots,b$ in $\wt\dl$, 
all edges of $\wt\dl$ between these floors together with their subdivision vertices, and all edges corresponding to the sequence $\sff$ of $\wt\dl$. 
Here $\di$ denotes divergence in the underlying floor graph, before adding the top marking vertices. For $v<b$, this divergence in the restricted graph equals $r_v-l_v+s_v$. Since every edge of the restricted floor graph has both endpoints in $\{1,\ldots,b\}$, the sum of its divergences vanishes, giving
$\di(b)=-\sum_{v=1}^{b-1}(r_v-l_v+s_v)$.
Hence, 
$$
r_b-l_b+s_b-\di (b)=d_t+\sum_{v=1}^{b}(r_v-l_v)
=W_b(\lf,\rf)=d_b^{(\ell)}.
$$
Add $d_b^{(\ell)}$ edges of weight one to the floor $b$ in $\wt\dl'$ {pointing away from $b$,}
then we have a $(\emptyset,1^{d_b^{(\ell)}})$-marked $\Delta^{(\ell)}$-floor
diagram. Hence, $\wt\dl'$ is an interior marked $\Delta^{(\ell)}$-floor diagram.  
From equation \eqref{eq:poly-defect}, the cogenus of $\wt\dl'$ is
$e=\delta-\delta(\Lambda,A,B)$, since the reversal and top-contact contributions
are unchanged, and the removed contributions are exactly
$\delta(\Lambda)$, $\delta(A)$, and $\delta(B)$.
The restriction of the original total order to the remaining vertices gives the compatible marking {in item} $(iii)$.

Conversely, we obtain an $(\eta,\mu)$-marked $\Delta$-floor diagram from an extended template $T=(\Lambda,A,B)$ in \textup{(i)}, an interior marked floor diagram $\wt\dl'$ in \textup{(ii)} and a compatible marking in $(iii)$ as follows.
We identify the last floor $b$ of the interior marked floor diagram $\wt\dl'$ with the vertex $M-\ell$ of the terminal diagram of $(\Lambda,A,B)$, and obtain a diagram $\wt\dl$. 
The order of vertices of the terminal diagram of $(\Lambda,A,B)$ is given by the compatible marking.
Hence, all vertices in $\wt\dl$ can be ordered as in step 4 of Definition \ref{def:relative-marking}.
Add the last $\ell$ entries of $\lf^0,\rf^0$ to the end of the two side permutations of $\Delta^{(\ell)}$ obtaining two permutations $\lf,\rf$ for an $h$-transverse polygon $\Delta$. The sequence $\sff$ is obtained by appending $\ell$ zeros to that of the interior marked floor diagram $\wt\dl'$.
For every $1\leq j\leq M$ and $w\geq1$, we
define the partitions $\eta^j$ and $\mu^j$ by
$$
 m_w(\eta^j)=
 \begin{cases}
 0,&1\leq j<b,\\
 a_{M-j,w},&b\leq j<M,\\
 a_w-\displaystyle\sum_{i=1}^{\ell}a_{iw},&j=M,
 \end{cases}
 \qquad
 m_w(\mu^j)=
 \begin{cases}
 0,&1\leq j<b,\\
 b_{M-j,w},&b\leq j<M,\\
 b_w-\displaystyle\sum_{i=1}^{\ell}b_{iw},&j=M.
 \end{cases}
$$
These multiplicities give
$\eta=\bigcup_{v=1}^M\eta^v$ and
$\mu=\bigcup_{v=1}^M\mu^v$, with the property $|\eta|+|\mu|=d_b$.

We check the divergence compatibility required in
Definition~\ref{def:relative-marking}. Suppose first that $\ell>0$.
For $1\leq j\leq\ell$, put
$$
 F_j=\lambda_j(\Lambda)+S_j=W_{b+j-1}-(\operatorname{wls}(A)_{\ell+1-j}
     +\operatorname{wls}(B)_{\ell+1-j}).
$$
Note that $F_j$ is the total weight of the floor-to-floor edges crossing $b+j-1/2$. 
On floors $j<b$, the interior
divergence conditions are unchanged, and
$|\eta^j|+|\mu^j|=0$. 
At the floor $b$, every newly added
outgoing floor-to-floor edge crosses $b+1/2$, while
no new floor-to-floor edge enters $b$. Thus its divergence increases by $F_1$, giving
$$
 r_b-l_b+s_b-\di(b)
 =W_b-F_1
 =\sum_{w\geq1}w(a_{\ell,w}+b_{\ell,w})
  =|\eta^b|+|\mu^b|.
$$
For a floor $v=b+j$, with $1\leq j<\ell$,
we have $s_v=0$ and
$\di(v)=F_{j+1}-F_j$. Indeed, an edge crossing both $b+j-1/2$ and $b+j+1/2$ makes no contribution to this difference. From the construction of $\lf,\rf$, we have
$$
\begin{aligned}
 r_v-l_v+s_v-\di(v)
 &=(W_v-W_{v-1})-(F_{j+1}-F_j)\\
&=(\operatorname{wls}(A)_{\ell-j}
     +\operatorname{wls}(B)_{\ell-j})-(\operatorname{wls}(A)_{\ell+1-j}
     +\operatorname{wls}(B)_{\ell+1-j})\\
 &=\sum_{w\geq1}w(a_{\ell-j,w}+b_{\ell-j,w})
  =|\eta^v|+|\mu^v|.
\end{aligned}
$$
At the last floor, $s_M=0$ and $\di(M)=-F_\ell$; hence
\[
\begin{aligned}
 r_M-l_M+s_M-\di(M)
 &=(W_M-W_{M-1})+F_\ell\\
 &=d_b-(\operatorname{wls}(A)_{1}
     +\operatorname{wls}(B)_{1})\\
 &=\sum_{w\geq1}w\left(a_w-\sum_{i=1}^{\ell}a_{iw}
                         +b_w-\sum_{i=1}^{\ell}b_{iw}\right)\\
 &=|\eta^M|+|\mu^M|.
\end{aligned}
\]
If $\ell=0$, the interior diagram already has bottom capacity
$d_b$ at floor $M$, and the compatibility is immediate.
This reconstructs an
$(\eta,\mu)$-marked $\Delta$-floor diagram, whose cogenus is
$e+\delta(T)=\delta$.
It is obvious that the constructions are inverse on equivalence classes.

Assume that $e>0$ and $\ell>0$. Then the last side lengths of $\Delta^{(\ell)}$
satisfy
$$
d_r^{p_r}-\ell,\ d_l^{p_l}-\ell
\geq\delta+1-\ell\geq e+1.
$$
The top width, all other side lengths, and the differences between the internal corner heights are the same as those of $\Delta$. By
Lemma~\ref{lem:poly-defect}\textup{(2)},
$d_b^{(\ell)}=W_b\geq2\delta\geq e$.
Put $s=c_r^{p_r}-c_l^{p_l}$. If $c_r^{p_r}-c_l^{p_l}\leq0$, then
$$
d_b^{(\ell)}+c_r^{p_r}-c_l^{p_l}=d_b+(1-\ell)(c_r^{p_r}-c_l^{p_l})\geq d_b+c_r^{p_r}-c_l^{p_l}\geq2\delta.
$$
If $c_r^{p_r}-c_l^{p_l}\geq0$, then
$d_b^{(\ell)}+c_r^{p_r}-c_l^{p_l}\geq d_b^{(\ell)}\geq2\delta$.
The top-width inequality is unchanged, and all the inequalities in condition
\eqref{eq:poly-range} hold with $e$ in place of $\delta$.
\end{proof}
Now we count the compatible markings of a fixed $(\eta,\mu)$-admissible extended template $(\Lambda,A,B)$ and $\Delta$. Let $G$ be the terminal diagram of $(\Lambda,A,B)$ and $\Delta$.
Let $\Omega_{\Lambda,A,B}$ be the set of total orders on the set consisting of floors $M-\ell,\ldots,M$, the subdivision vertices of nonshort edges of $G$, and the $B$-vertices attached to floors $M-\ell,\ldots,M-1$ of $G$, compatible with the orientation of the terminal diagram $G$.
In particular, this finite set depends only on the translated extended template. In defining $\Omega_{\Lambda,A,B}$, we temporarily label the subdivision vertices of the nonshort edges and the displaced $B$-vertices, including vertices with identical incidence and weight data. These labels will be removed by the symmetry factor below.

For $\omega\in\Omega_{\Lambda,A,B}$, let $h_j(\omega)$ denote the number of its labelled vertices
between floors $M-\ell+j-1$ and $M-\ell+j$, and let $h_+(\omega)$ count those after floor $M$.
Let $m_\Lambda(x,y,w)$ be the number of edges $x\to y$ of weight $w$,
and put
$$
\rho_{\Lambda,A,B}
=\prod_{x<y,\,w}m_\Lambda(x,y,w)!\prod_{i,w}a_{iw}!b_{iw}!.
$$
For $r\geq0$, we use the falling and rising factorials
$$
(x)_{\underline r}=\prod_{k=0}^{r-1}(x-k),\qquad
(x)^{\overline r}=\prod_{k=0}^{r-1}(x+k),
$$
with empty products equal to $1$. 
The enumeration of terminal orders for relative plane curves and its
normalized polynomiality are given in
\cite[Section~3 and Lemma~1]{Block-2012}.
The following is a generalization of \cite[Section~3 and Lemma~1]{Block-2012}.

\begin{lemma}\label{lem:poly-boundary}
Let $\Delta=\Delta(\cf,\df)$ be an $h$-transverse polygon such that $\cf,\df$ are sufficiently large and spread out for $\delta>0$. 
Let $(\Lambda,A,B)$ be an $(\eta,\mu)$-admissible extended template with $\delta(\Lambda,A,B)\leq\delta$ {and length $\ell$, let $L=l(\mu)$, and let} $S_j$ be given by equation \eqref{Eqt:def-Sj}.
Define
\begin{equation}\label{eq:poly-boundary}
\begin{aligned}
\mathcal B_{(\Lambda,A,B),\delta}
={}&\frac{\mult_q(\Lambda)}{\rho_{\Lambda,A,B}}
\prod_{w\geq1}(a_w)_{\underline{\sum_{i\geq1}a_{iw}}}(b_w)_{\underline{\sum_{i\geq1}b_{iw}}}\\
&\times\sum_{\omega\in\Omega_{\Lambda,A,B}}
\frac{(L-\delta+1)^{\overline{\delta-\sum_{i,w\geq1}b_{iw}+h_+(\omega)}}}{h_+(\omega)!}
\prod_{j=1}^{\ell}\binom{S_j+h_j(\omega)}{h_j(\omega)}.
\end{aligned}
\end{equation}
Then the following statements hold.
\begin{enumerate}[$(i)$]
\item
Let $\operatorname{ord}_{(\Lambda,A,B)}$ be the number of compatible markings of $(\Lambda,A,B)$ and a fixed interior marked floor diagram, up to equivalence.
Then
for $L\geq\delta$, we have
$
\mult_q(\Lambda)\operatorname{ord}_{(\Lambda,A,B)}
=\frac{(L-\delta)!}{\prod_{w\geq1}b_w!}\,
\mathcal B_{(\Lambda,A,B),\delta}.
$
\item
Regarding $L,a_w,b_w$ and the polygon coordinates as independent indeterminates, $\mathcal B_{(\Lambda,A,B),\delta}$ is a polynomial over
$\qb[q^{\pm1}]$ in the variables
$
d_b, c_r^{p_r}-c_l^{p_l},  L, 
a_1, \ldots, a_\delta,  b_1,\ldots,b_\delta$.
Its degree in $L$ is at most $\delta$.
Moreover, after substituting the polygon closure relation for $d_b$, while keeping
$L$ and the contact variables independent of $\cf,\df$, it satisfies
$\deg_{\df}\mathcal B_{(\Lambda,A,B),\delta}\leq\delta(\Lambda,A,B)$,
$\deg_{\cf}\mathcal B_{(\Lambda,A,B),\delta}\leq\delta(\Lambda,A,B)$,
and $\deg_{(\cf,\df)}\mathcal B_{(\Lambda,A,B),\delta}\leq2\delta(\Lambda,A,B)$.
For the empty extended template,
$\mathcal B_{(\Lambda,A,B),\delta}=(L)_{\underline\delta}$.
\end{enumerate}
\end{lemma}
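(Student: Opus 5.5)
Part (i) is a direct count of compatible markings, following the enumeration in \cite[Section~3]{Block-2012}. I would first fix a labelled order $\omega\in\Omega_{\Lambda,A,B}$ on the floors $M-\ell,\ldots,M$, the subdivision vertices of nonshort edges, and the displaced $B$-vertices. A full compatible marking is then obtained by inserting three further groups of vertices.

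\begin{enumerate}
\item The $S_j$ subdivision vertices of short edges between floors $M-\ell+j-1$ and $M-\ell+j$. These are indistinguishable and must lie strictly between those two floors, where $h_j(\omega)$ labelled vertices already sit. They can be interleaved in $\binom{S_j+h_j(\omega)}{h_j(\omega)}$ ways.
\item The $A$-vertices. Their positions are forced: they come last, in weakly increasing order of weight, and they contribute no positional choices. The factor $\prod_w (a_w)_{\underline{\sum_i a_{iw}}}$ accounts for which of the $a_w$ fixed contacts of weight $w$ (labelled by $\varPi$) are displaced to rows $i\geq1$.
\item The $B$-vertices attached to floor $M$, of which there are $L-\sum_{i,w}b_{iw}$. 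Together with the $h_+(\omega)$ labelled vertices after floor $M$, they form a block after floor $M$. Forgetting the labels of free contacts of equal weight, the number of interleavings is $\bigl(L-\sum b_{iw}+h_+\bigr)!/\bigl(h_+!\prod_w(b_w-\sum_i b_{iw})!\bigr)$.
\end{enumerate}

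Dividing by $\rho_{\Lambda,A,B}$ removes the temporary labels on parallel edges and on displaced $B$-vertices. The identity $\prod_w b_w!/(b_w-\sum_i b_{iw})! = \prod_w (b_w)_{\underline{\sum_i b_{iw}}}$ then produces the falling factorial in $b_w$. Finally, I would factor out $(L-\delta)!$, which is legitimate for $L\geq\delta$, to get
\[
\frac{(L-\textstyle\sum b_{iw}+h_+)!}{(L-\delta)!}=(L-\delta+1)^{\overline{\delta-\sum b_{iw}+h_+}}.
\]
This gives the stated formula. Multiplying by $\mult_q(\Lambda)$ is a bookkeeping step. The point to check carefully is that equivalence of markings, namely order-preserving isomorphisms fixing floors, matches exactly this quotient by $\rho$ and by the $b_w!$. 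I would verify this by showing that the group generated by permuting parallel edges and same-weight contacts acts freely on labelled markings.

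For part (ii), polynomiality comes term by term. The set $\Omega_{\Lambda,A,B}$ is finite and independent of $\df$. Each $h_j(\omega)$ and $h_+(\omega)$ is a fixed integer. By \eqref{Eqt:def-Sj}, each $S_j$ is affine in $d_b$ and $c_r^{p_r}-c_l^{p_l}$, so each $\binom{S_j+h_j}{h_j}$ is a polynomial of degree $h_j$ in $S_j$. The rising factorial is a polynomial in $L$ of degree $\delta-\sum b_{iw}+h_+$. The falling factorials are polynomials in $a_w,b_w$, and only $w\le\delta$ occur because admissible rows have $w\le\delta(A)+\delta(B)\le\delta$.

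The degree bounds reduce to counting labelled vertices. The total $\sum_j h_j(\omega)$ is at most the number of nonshort edges plus displaced $B$-vertices, which is at most $\delta(\Lambda)+\delta(B)\le\delta(\Lambda,A,B)$. This holds because each nonshort edge contributes $(j-i)w-1\ge1$ and each displaced $B$-vertex contributes $ij\ge1$.

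Since $d_b$ is linear in $\df$ with coefficients linear in $\cf$, $S_j$ has length degree $1$, direction degree $1$, and joint degree at most $2$. This gives the bounds $\deg_\df,\deg_\cf\le\delta(T)$ and joint degree $\le 2\delta(T)$.

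For the degree in $L$, I would bound $h_+\le\sum_{i,w}b_{iw}$: only displaced $B$-vertices can lie after floor $M$, since subdivision vertices of floor-to-floor edges lie before their endpoint and hence before floor $M$. Hence $\deg_L\le\delta$.

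For the empty template, $\Omega$ is a single order with $h_+=0$, and every product and falling factorial is trivial. So $\mathcal B=(L-\delta+1)^{\overline{\delta}}=(L)_{\underline\delta}$.

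The main obstacle is the symmetry bookkeeping in (i), that is, verifying freeness of the label action. The remaining steps are routine.
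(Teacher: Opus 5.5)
Your route is the paper's: fix $\omega\in\Omega_{\Lambda,A,B}$, insert the short-edge subdivision vertices in $\binom{S_j+h_j(\omega)}{h_j(\omega)}$ ways, interleave the floor-$M$ free contacts with the $h_+(\omega)$ trailing labelled vertices, place the fixed contacts, and quotient by the free label action. Part (ii) is the same term-by-term degree count and is fine. The gap is in the fixed-contact step of (i), which is exactly the symmetry bookkeeping you flagged as the main obstacle.

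\emph{The $A$-vertices are not forced.} Under the paper's equivalence relation (an order-preserving isomorphism fixing the floors, so the $k$-th vertex must match the $k$-th vertex), the weight-$w$ $A$-vertices fill $a_w$ consecutive final slots. Which of these slots attach to floor $M-i$ (there are $a_{iw}$ of them) and which attach to floor $M$ is a genuine choice, and different assignments give inequivalent markings. The correct factor for each $w$ is therefore the multinomial
\[
\frac{a_w!}{(a_w-\sum_i a_{iw})!\,\prod_i a_{iw}!}=\frac{(a_w)_{\underline{\sum_i a_{iw}}}}{\prod_i a_{iw}!}.
\]
Suppose instead you label the fixed contacts by $\varPi$. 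Choosing which contacts go to each row $i\geq1$ is then this same multinomial, not the falling factorial. The falling factorial counts ordered selections, so it overcounts by $\prod_i a_{iw}!$ whenever two same-weight fixed contacts sit on one floor. For example, $a_w=a_{1w}=2$ gives $2$ instead of $1$.

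\emph{Consequence for $\rho_{\Lambda,A,B}$.} The displaced $A$-vertices are not among the labelled vertices of $\Omega_{\Lambda,A,B}$. So the free action you describe, permuting parallel edges and displaced $B$-vertices, has orbits of size $\prod m_\Lambda(x,y,w)!\prod_{i,w} b_{iw}!$ only. Taken literally, your count is $\prod_{i,w}a_{iw}!$ times the correct $\operatorname{ord}_{(\Lambda,A,B)}$. The factor $\prod_{i,w}a_{iw}!$ inside $\rho_{\Lambda,A,B}$, which you divide by when you assert the stated formula, has no justification in your account.

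\emph{The fix.} Take the multinomial above as the $A$-factor, as the paper does. It supplies $\prod a_{iw}!$, and the orbit count supplies $\prod m_\Lambda!\prod b_{iw}!$; together these give exactly $\rho_{\Lambda,A,B}$. Your conversion to falling and rising factorials then goes through unchanged. There the nonnegativity of $\delta-\sum_{i,w}b_{iw}+h_+(\omega)$ uses $\sum_{i,w}b_{iw}\le\delta(B)\le\delta$, which you already have from part (ii).
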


\begin{proof}
We adapt the terminal-order method of
\cite[Section~3 and Lemma~1]{Block-2012} to the case of an $h$-transverse polygon $\Delta$.

$(i)$.
Let $G$ be the terminal diagram of $(\Lambda,A,B)$, and fix $\omega\in\Omega_{\Lambda,A,B}$.
Then we have a total order on the set of floors $M-\ell,\ldots,M$, the subdivision vertices of nonshort edges of $G$, and the $B$-vertices attached to floors $M-\ell,\ldots,M-1$ of $G$.
To get a compatible marking, we only need to insert the remaining vertices into $\omega$ in a compatible way.
There are $S_j$ short edges connecting the floor $M-\ell+j-1$ to $M-\ell+j$ of $G$. 
There are $h_j(\omega)$ vertices between $M-\ell+j-1$ and $M-\ell+j$ according to $\omega$.
Distributing the $S_j$ short edge subdivision vertices among these $h_j(\omega)$ vertices gives us
$\binom{S_j+h_j(\omega)}{h_j(\omega)}$ choices, up to equivalence.

There are $L-\sum_{i,w\geq1}b_{iw}$ $B$-vertices attached to floor $M$, and
$b_w-\sum_{i\geq1}b_{iw}$ of them have weight $w$.
Their multiset orders can be interleaved with the given $h_+(\omega)$
ordered vertices in
\begin{center}
$
\binom{L-\sum_{i,w\geq1}b_{iw}+h_+(\omega)}{h_+(\omega)}
\frac{(L-\sum_{i,w\geq1}b_{iw})!}{\prod_{w\geq1}(b_w-\sum_{i\geq1}b_{iw})!}
$
\end{center} ways, up to equivalence.
All of these vertices precede the $A$-vertices.
Note that the $A$-vertices occupy the final positions, and the weights of the edges adjacent to them are non-decreasing.
For each weight $w$, we have $\frac{a_w!}{(a_w-\sum_{i\geq1}a_{iw})!\prod_{i\geq1}a_{iw}!}$ ways to arrange the $a_w$ $A$-vertices, up to equivalence.

The permutations of equally weighted parallel edges and of equally weighted $B$-vertices attached to the same floor act freely on $\Omega_{\Lambda,A,B}$. 
An orbit of this action on $\Omega_{\Lambda,A,B}$ has
$\prod_{x<y,\,w}m_\Lambda(x,y,w)!\prod_{i,w}b_{iw}!$ elements.
Summing the product of the above factors over the equivalence classes of $\omega$, we obtain the formula for $\operatorname{ord}_{(\Lambda,A,B)}$:
$$
\begin{aligned}
\operatorname{ord}_{(\Lambda,A,B)}
={}&\frac{1}{\rho_{\Lambda,A,B}}
\prod_{w\geq1}\frac{a_w!}{(a_w-\sum_{i\geq1}a_{iw})!}\\
&\times\sum_{\omega\in\Omega_{\Lambda,A,B}}
\frac{(L-\sum_{i,w\geq1}b_{iw}+h_+(\omega))!}
{h_+(\omega)!\prod_{w\geq1}(b_w-\sum_{i\geq1}b_{iw})!}
\prod_{j=1}^{\ell}\binom{S_j+h_j(\omega)}{h_j(\omega)}.
\end{aligned}
$$

{Let $L\geq\delta$. The equality} $\mult_q(\Lambda)\operatorname{ord}_{(\Lambda,A,B)}
=\frac{(L-\delta)!}{\prod_{w\geq1}b_w!}\,
\mathcal B_{(\Lambda,A,B),\delta}$ follows from the formula for $\operatorname{ord}_{(\Lambda,A,B)}$.
Indeed,
$$
\frac{a_w!}{(a_w-\sum_{i\geq1}a_{iw})!}=(a_w)_{\underline{\sum_{i\geq1}a_{iw}}},\qquad
\frac{b_w!}{(b_w-\sum_{i\geq1}b_{iw})!}=(b_w)_{\underline{\sum_{i\geq1}b_{iw}}},
$$
and, writing $h=h_+(\omega)$, 
$$
\frac{(L-\sum_{i,w\geq1}b_{iw}+h)!}{(L-\delta)!}
=(L-\delta+1)^{\overline{\delta-\sum_{i,w\geq1}b_{iw}+h}}.
$$
The order $\delta-\sum_{i,w\geq1}b_{iw}+h$ is nonnegative since $\sum_{i,w\geq1}b_{iw}\leq\delta(B)\leq\delta$.
Hence, 
$$
\mult_q(\Lambda)\frac{\prod_{w\geq1}b_w!}{(L-\delta)!}\operatorname{ord}_{(\Lambda,A,B)}=\mathcal B_{(\Lambda,A,B),\delta}.
$$

$(ii)$. We first give some basic combinatorial estimates that will be used.
Recall that the labelled vertices in $\omega$ are the disjoint union of the
$|E(\Lambda)|$ edge-subdivision vertices and the $\sum_{i,w\geq1}b_{iw}$ displaced
$B$-vertices.
Moreover, none of them is before floor $M-\ell$, and only $B$-vertices can be after floor $M$.
Hence, we have
$$
\begin{gathered}
\sum_{j=1}^{\ell}h_j(\omega)+h_+(\omega)
=|E(\Lambda)|+\sum_{i,w\geq1}b_{iw}
\leq\delta(\Lambda)+\delta(B)
\leq\delta(\Lambda,A,B),\\
0\leq h_+(\omega)\leq\sum_{i,w\geq1}b_{iw}\leq\delta(B)\leq\delta.
\end{gathered}
$$
Here each edge of $\Lambda$ contributes at least one to its cogenus because
it is not short, and $iw\geq1$ for every entry $b_{iw}$.

Let $r=\delta-\sum_{i,w\geq1}b_{iw}+h_+(\omega)$, and we have $0\leq r\leq\delta$.
The only $L$-dependent factor in the summand of $\mathcal B_{(\Lambda,A,B),\delta}$ in equation
\eqref{eq:poly-boundary} is the rising factorial
$(L-\delta+1)^{\overline r}$. This factor is a polynomial of degree
$r$ in $L$. Hence the $L$-degree is at most $\delta$.

For a fixed extended template $(\Lambda,A,B)$, the orders $\sum_{i\geq1}a_{iw},\sum_{i\geq1}b_{iw}$ of the falling factorials,
the finite set $\Omega_{\Lambda,A,B}$, and the factor $\rho_{\Lambda,A,B}$ are
constant.
Each binomial factor $\binom{S_j+h_j(\omega)}{h_j(\omega)}$ is a polynomial of degree 
$h_j(\omega)$ in $S_j$.
From equation \eqref{Eqt:def-Sj}, $S_j$ is affine linear in $d_b$ and $c_r^{p_r}-c_l^{p_l}$.
Moreover, a nonzero entry $a_{iw}$ or $b_{iw}$ contributes at least $w$ to
the template cogenus, so all columns with $w>\delta$ vanish.
Since $\mult_q(\Lambda)\in\zb[q^{\pm1}]$, we obtain the stated
polynomiality over $\qb[q^{\pm1}]$ and the bound on the $L$-degree.

Using the equality $d_b=d_t+\sum_i c_r^i d_r^i-\sum_j c_l^j d_l^j$,
we know that $S_j$ has degree at most one in each of $\cf$ and $\df$, and at most two jointly. 
All other factors are independent of these polygon variables.
In particular, the degree of $\prod_{j=1}^{\ell}\binom{S_j+h_j(\omega)}{h_j(\omega)}$ in each of $\cf$ and $\df$ is at most $\sum_{j=1}^{\ell}h_j(\omega)$, and the degree in both $\cf,\df$ is at most $2\sum_{j=1}^{\ell}h_j(\omega)$.
The preceding vertex-count inequality proves the asserted bounds for
their finite sum.
These are bounds before imposing the enumerative relations
$L=\sum_w b_w$ and $\sum_w w(a_w+b_w)=d_b$.

For the empty template, $\ell=0$, $A=B=0$, and there is a single structural
order with $h_+=0$.
Both its symmetry factor and refined multiplicity equal $1$.
Formula~\eqref{eq:poly-boundary} becomes
$\mathcal B_{(\varnothing,0,0),\delta}
=(L-\delta+1)^{\overline\delta}=(L)_{\underline\delta}$.
\end{proof}

\subsubsection{Relative polynomiality}
\label{subsec:relative-poly}
Let $\Delta=\Delta(\cf,\df)$ be an $h$-transverse polygon where $\cf,\df$ are sufficiently large and spread out for $\delta>0$. Fix two partitions $\eta,\mu$ with $|\eta|+|\mu|=d_b$.
Note that we have proved $\area(\Delta)=2\sum_{j=0}^{M-1}W_j-d_t+d_b$ before. {Thus the genus is given by}
$$
\begin{aligned}
g_\delta(\cf,\df)
&:=|\Delta^\circ\cap\zb^2|-\delta
=\sum_{j=0}^{M-1}W_j-d_t-M+1-\delta\\
&=\frac{\area(\Delta)-2M-d_t-d_b}{2}+1-\delta.
\end{aligned}
$$
From equation \eqref{eq:dadi-theta}, the area is homogeneous of length
degree two and has direction degree at most one.

\begin{theorem}\label{thm:poly-fixed}
Fix $\cf$ and $\delta\geq0$. For any $\Delta=\Delta(\cf,\df)$ satisfying condition
\eqref{eq:poly-range} with $\delta>0$ and any pair of partitions $\eta,\mu$ satisfying
$|\eta|+|\mu|=d_b$, $l(\mu)\geq\delta$, 
there is a polynomial
$$
R_\delta^\cf\in\qb[q^{\pm1}][\df,L,a_1,\ldots,a_\delta,b_1,\ldots,b_\delta],
$$
where $L=l(\mu)$, $a_w=m_w(\eta)$ and $b_w=m_w(\mu)$, 
such that one has
\begin{equation}\label{eq:poly-relative}
N_{\floor}^{\Delta,g_\delta}(\eta,\mu;q)
=\left(\prod_{w\geq1}[w]_q^{b_w}\right)
\frac{(l(\mu)-\delta)!}{\prod_{w\geq1}b_w!}\,
R_\delta^\cf(\df,L,a_1,\ldots,a_\delta,b_1,\ldots,b_\delta;q).
\end{equation}
Moreover, the polynomial $R_\delta^\cf$ has degree $2\delta$ in $\df$ when we regard the contact variables $a_w,b_w$ as independent coefficient parameters, and its leading coefficient is
\begin{equation}\label{eq:poly-relative-leading}
[R_\delta^\cf]_{\df,2\delta}
=\frac{(l(\mu))_{\underline\delta}}{\delta!}
\left((q+4+q^{-1})\frac{\area(\Delta)}2\right)^\delta.
\end{equation}
For $\delta=0$, one has $R_0^\cf=1$. 
\end{theorem}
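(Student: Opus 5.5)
The plan is to combine the bijection of Lemma~\ref{lem:poly-decomp} with the interior polynomiality of Lemma~\ref{lem:poly-interior} and the boundary count of Lemma~\ref{lem:poly-boundary}.

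\emph{Step 1: decomposition of the count.} By Lemma~\ref{lem:poly-decomp}, every $(\eta,\mu)$-marked $\Delta$-floor diagram of cogenus $\delta$ corresponds to a triple: an admissible extended template $T=(\Lambda,A,B)$ with $\delta(T)\le\delta$ and length $\ell\le\delta(T)$, an interior marked $\Delta^{(\ell)}$-floor diagram of cogenus $e=\delta-\delta(T)$, and a compatible marking. The refined multiplicity factors as follows. The template edges contribute $\mult_q(\Lambda)$, the short edges contribute $1$, the interior part contributes $\mult_q$ of the interior diagram, and the free contacts contribute $\prod_w[w]_q^{b_w}$.

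Summing over triples and applying Lemma~\ref{lem:poly-boundary}(i) gives
$$
N_{\floor}^{\Delta,g_\delta}(\eta,\mu;q)=\Bigl(\prod_w[w]_q^{b_w}\Bigr)\frac{(L-\delta)!}{\prod_w b_w!}\sum_{T}\mathcal B_{T,\delta}\,K_{\delta-\delta(T)}(\Delta^{(\ell)};q).
$$
The sum runs over the finitely many templates with $\delta(T)\le\delta$. Admissibility only involves $a_w,b_w$ with $w\le\delta$. To make the sum independent of the data, I would drop the admissibility constraint and note that the falling factorials $(a_w)_{\underline{\sum_i a_{iw}}}$ and $(b_w)_{\underline{\sum_i b_{iw}}}$ in $\mathcal B_{T,\delta}$ vanish exactly when admissibility fails.

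\emph{Step 2: polynomiality of each term.} For $e=\delta-\delta(T)>0$, Lemma~\ref{lem:poly-decomp} shows that $\Delta^{(\ell)}$ satisfies \eqref{eq:poly-range} with $e$ in place of $\delta$. Lemma~\ref{lem:poly-interior}(1) then gives $K_e(\Delta^{(\ell)};q)=\mathcal K_e^{\cf}(\df^{(\ell)};q)$, where $\df^{(\ell)}$ is $\df$ with the last side lengths $d_r^{p_r}$ and $d_l^{p_l}$ each decreased by $\ell$. This is a polynomial in $\df$ of degree $2e$.

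By Lemma~\ref{lem:poly-boundary}(ii), $\mathcal B_{T,\delta}$ is polynomial in $\df,L,a_w,b_w$, of $\df$-degree at most $\delta(T)$. Therefore each term has $\df$-degree at most $2e+\delta(T)\le 2\delta$, with equality only when $\delta(T)=0$. Define $R^\cf_\delta$ to be the sum over $T$. For $\delta=0$, only the empty template and $K_0=1$ contribute, and $\mathcal B=(L)_{\underline 0}=1$, so $R^\cf_0=1$.

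\emph{Step 3: leading term.} Only the empty template reaches $\df$-degree $2\delta$. Its contribution is $(L)_{\underline\delta}\,\mathcal K^\cf_\delta(\df)$ with $\ell=0$. By \eqref{eq:poly-interior-leading}, the top part of this contribution is $\frac{(L)_{\underline\delta}}{\delta!}\bigl((q+4+q^{-1})\area(\Delta)/2\bigr)^\delta$, which is the claimed formula \eqref{eq:poly-relative-leading}.

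\emph{Main obstacle.} I expect the hardest step to be checking that the degree bookkeeping is uniform. First, the substitution $\df\mapsto\df^{(\ell)}$ must not raise degrees; it is an affine shift, so it does not. Second, the cases $e=0$ with $\delta(T)=\delta$ need separate treatment. There $K_0=1$, and that term has $\df$-degree at most $\delta$, so it cannot interfere with the top-degree term. Finally, I must confirm that the bijection of Lemma~\ref{lem:poly-decomp} respects the equivalence of markings used in $\nu_{\eta,\mu}$, so that the factor $\operatorname{ord}_T$ is counted correctly.
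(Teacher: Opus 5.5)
Your proposal is correct and follows the paper's proof essentially step for step. You define $R_\delta^\cf$ as the finite sum over extended templates $T$ with $\delta_T\le\delta$ of $\mathcal K^\cf_{\delta-\delta_T}(\df^{(\ell_T)};q)\,\mathcal B_{T,\delta}$, obtain the identity from Lemmas~\ref{lem:poly-decomp} and~\ref{lem:poly-boundary}(i), and bound each term's length degree by $2\delta-\delta_T$, so that only the empty template, contributing $(L)_{\underline\delta}\,\mathcal K^\cf_\delta$, reaches degree $2\delta$. Your remark that non-admissible templates may be included because the falling factorials $(a_w)_{\underline{\sum_i a_{iw}}}$ and $(b_w)_{\underline{\sum_i b_{iw}}}$ vanish on them makes explicit a point the paper leaves implicit when it sums over all $T$ with $\delta_T\le\delta$.
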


\begin{proof}
For an extended template $T=(\Lambda,A,B)$, abbreviate its cogenus by
$\delta_T=\delta(\Lambda,A,B)$ and its length by $\ell_T=l(\Lambda)$. Recall that
$\Delta^{(\ell_T)}$ is the polygon obtained by deleting the bottom $\ell_T$ rows of $\Delta$.
$\df^{(\ell_T)}$ is the length sequence of the polygon $\Delta^{(\ell_T)}$.
Let
$$
R_\delta^\cf
=\sum_{\substack{T=(\Lambda,A,B)\\\delta_T\leq\delta}}
\mathcal K_{\delta-\delta_T}^\cf(\df^{(\ell_T)};q)
\mathcal B_{T,\delta},
$$
where $\mathcal K_{\delta-\delta_T}^\cf(\df^{(\ell_T)};q)$ is the polynomial in Lemma \ref{lem:poly-interior}(1), and $\mathcal B_{T,\delta}$ is given in equation \eqref{eq:poly-boundary}.
The set of extended templates $T=(\Lambda,A,B)$ with $\delta_T\leq\delta$ is finite and depends only on $\delta$. Indeed,
from equation \eqref{eq:poly-term-bound}, $\ell_T\leq\delta$, so the length of $\Lambda$ is bounded.
Since each edge of $T$ contributes at least one to the cogenus and has weight at most $\delta+1$, 
the number of edges in $\Lambda$ is bounded.
Nonzero matrix entries in $A$ and $B$ satisfy $iw(a_{iw}+b_{iw})\leq\delta$.
The truncation $\df\mapsto\df^{(\ell_T)}$ is affine.
The defining polynomial $\mathcal B_{T,\delta}$ also depends on $L=l(\mu)$.
Thus Lemmas~\ref{lem:poly-interior}
and~\ref{lem:poly-boundary} show that $R_\delta^\cf$ belongs
to the asserted ring.

From Lemma~\ref{lem:poly-decomp}, there is a bijection between 
the set of $(\eta,\mu)$-marked $\Delta$-floor diagrams and the set of 
triples consisting of {an extended template $T$, an interior marked floor diagram, and a compatible marking}. 
Moreover, the interior marked floor diagrams in the triples are counted by 
$\mathcal K_{\delta-\delta_T}^\cf$ on the truncated polygon.
After factoring out the common free-contact product $\prod_{w\geq1}[w]_q^{b_w}$, 
the refined edge multiplicity is the product of the multiplicities of the two pieces. 
From Lemma \ref{lem:poly-boundary}(i), the remaining contribution of $T$ is
$(l(\mu)-\delta)!\mathcal K_{\delta-\delta_T}^\cf
\mathcal B_{T,\delta}/(\prod_{w\geq1}b_w!)$.
Summing over $T$, we obtain the equation \eqref{eq:poly-relative}.
When $\delta=0$, the only terminal template is empty and
$\mathcal K_0=\mathcal B_{T,0}=1$, giving $R_0^\cf=1$.

For the empty extended template, $\delta_T=\ell_T=0$ and
$\mathcal B_{T,\delta}=(l(\mu))_{\underline\delta}$.
By Lemma~\ref{lem:poly-interior}\textup{(3)}, its contribution to
$R_\delta^\cf$ {has a part of length degree $2\delta$ equal to} $\frac{(l(\mu))_{\underline\delta}}{\delta!}
\left((q+4+q^{-1})\frac{\area(\Delta)}2\right)^\delta$.
For any nonempty $T$, one has $\delta_T\geq1$, and from
Lemmas~\ref{lem:poly-interior} and~\ref{lem:poly-boundary} we obtain
$$
\deg_{\df}\bigl(\mathcal K_{\delta-\delta_T}^\cf
(\df^{(\ell_T)};q)\,\mathcal B_{T,\delta}\bigr)
\leq 2(\delta-\delta_T)+\delta_T
=2\delta-\delta_T<2\delta.
$$
Thus no nonempty extended template contributes to the degree $2\delta$ homogeneous part. The area is a nonzero homogeneous quadratic
polynomial in the length variables,
and the coefficient $(l(\mu))_{\underline\delta}(q+4+q^{-1})^\delta$
is nonzero. Hence, $[R_\delta^\cf]_{\df,2\delta}$ is nonzero and the polynomial $R_\delta^\cf$ has length degree exactly $2\delta$.
\end{proof}

\begin{theorem}\label{thm:poly-varying}
Fix $p_r,p_l\geq1$ and $\delta\geq0$.
For any $\Delta=\Delta(\cf,\df)$ {satisfying conditions}
\eqref{eq:poly-range} and \eqref{eq:poly-gap-range} with $\delta>0$ and any pair of partitions $\eta,\mu$ satisfying
$|\eta|+|\mu|=d_b$, $l(\mu)\geq\delta$, 
there is a polynomial
$$
R_\delta\in\qb[q^{\pm1}][\cf,\df,L,a_1,\ldots,a_\delta,b_1,\ldots,b_\delta]
$$
where $L=l(\mu)$, $a_w=m_w(\eta)$ and $b_w=m_w(\mu)$,
such that one has
$$
N_{\floor}^{\Delta,g_\delta}(\eta,\mu;q)
=\left(\prod_{w\geq1}[w]_q^{b_w}\right)
\frac{(l(\mu)-\delta)!}{\prod_{w\geq1}b_w!}\,
R_\delta(\cf,\df,L,a_1,\ldots,a_\delta,b_1,\ldots,b_\delta;q).
$$
One may choose $R_\delta$ with
$$
\deg_{\cf}R_\delta=\delta,\qquad
\deg_{\df}R_\delta=2\delta,\qquad
\deg_{(\cf,\df)}R_\delta=3\delta,
$$
and {with the highest length degree part given in equation} \eqref{eq:poly-relative-leading}.
For $\delta=0$, $R_0=1$.
\end{theorem}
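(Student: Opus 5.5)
The proof follows that of Theorem~\ref{thm:poly-fixed}, with Lemma~\ref{lem:poly-interior}(2) in place of Lemma~\ref{lem:poly-interior}(1). Define
$$
R_\delta=\sum_{\substack{T=(\Lambda,A,B)\\\delta_T\leq\delta}}
\mathcal K_{\delta-\delta_T}(\cf,\df^{(\ell_T)};q)\,\mathcal B_{T,\delta},
$$
where the sum runs over the same finite set of extended templates as before. The identity for $N_{\floor}^{\Delta,g_\delta}(\eta,\mu;q)$ then follows from the bijection of Lemma~\ref{lem:poly-decomp} and the marking count of Lemma~\ref{lem:poly-boundary}(i), exactly as in the fixed case. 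The additional hypothesis \eqref{eq:poly-gap-range} is needed only to apply Lemma~\ref{lem:poly-interior}(2) to the truncated polygon.

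The first point to verify is that $\Delta^{(\ell_T)}$ satisfies both \eqref{eq:poly-range} and \eqref{eq:poly-gap-range} with $\delta$ replaced by $e=\delta-\delta_T$ whenever $e>0$. Lemma~\ref{lem:poly-decomp} already gives \eqref{eq:poly-range}. For \eqref{eq:poly-gap-range}, truncation does not change the direction vector $\cf$, so the gaps are unchanged, and $\delta+1\geq e+1$. The last side lengths $d_r^{p_r}-\ell_T$ and $d_l^{p_l}-\ell_T$ remain at least $e+1$, so no edge disappears and $p_r,p_l$ stay fixed. Consequently $\mathcal K_e$ is a single polynomial in $(\cf,\df^{(\ell_T)})$. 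The truncation $\df\mapsto\df^{(\ell_T)}$ subtracts $\ell_T$ from two coordinates, so it is affine with constant shift independent of $\cf$. Composing with it therefore preserves the three degree bounds $e$, $2e$ and $3e$. Lemma~\ref{lem:poly-swaps} ensures that only $\delta_0=0$ contributes, so no $\cf$-dependent count $m_\cf$ appears.

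For the degrees, each summand satisfies
$$
\deg_\cf\leq(\delta-\delta_T)+\delta_T=\delta,\qquad
\deg_\df\leq 2(\delta-\delta_T)+\delta_T,\qquad
\deg_{(\cf,\df)}\leq 3(\delta-\delta_T)+2\delta_T,
$$
using Lemma~\ref{lem:poly-boundary}(ii) for $\mathcal B_{T,\delta}$. This gives the upper bounds $\delta$, $2\delta$ and $3\delta$, and the length bound is strict for nonempty $T$. The leading length part therefore comes only from the empty template, where $\mathcal B=(L)_{\underline\delta}$ and Lemma~\ref{lem:poly-interior}(3) gives \eqref{eq:poly-relative-leading}.

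The main obstacle is showing that the $\cf$-degree and joint degree are attained exactly, since the statement asserts equalities. My first attempt would use the leading term $\frac{(L)_{\underline\delta}}{\delta!}\bigl((q+4+q^{-1})\area(\Delta)/2\bigr)^\delta$. By \eqref{eq:dadi-theta}, $\area(\Delta)$ contains monomials of the form $c\cdot d\cdot d$, for instance $c_r^1 d_r^1(M-\ldots)$. Its $\delta$-th power then has a nonzero part of $\cf$-degree $\delta$ and $\df$-degree $2\delta$, hence joint degree $3\delta$. The remaining task is to check that this part is not cancelled by the other summands. Nonempty templates have $\df$-degree strictly below $2\delta$, so they cannot cancel a term of $\df$-degree $2\delta$. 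Hence this leading part certifies all three degrees at once. If exactness proved delicate, the fallback is the phrase ``one may choose'': the bounds hold for any choice, and equality is forced by the leading term. Finally, for $\delta=0$ only the empty template contributes, so $R_0=1$.
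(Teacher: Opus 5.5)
Your proposal is correct and follows essentially the same route as the paper's proof. Both arguments use the same template sum built from the varying-direction interior polynomial, note that truncation leaves $\cf$ (and hence the gap condition) unchanged, obtain the per-template bounds $\delta$, $2\delta-\delta_T$ and $3\delta-\delta_T$, and get exactness from the nonzero bidegree $(\delta,2\delta)$ component of the empty-template leading term. Your extra checks, that $p_r,p_l$ survive truncation and that $\df\mapsto\df^{(\ell_T)}$ is a $\cf$-independent shift, are details the paper leaves implicit. The ``fallback'' remark is unnecessary, since your leading-term argument already settles exactness.
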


\begin{proof}
For an extended template $T=(\Lambda,A,B)$, denote its cogenus by
$\delta_T=\delta(\Lambda,A,B)$ and its length by $\ell_T=l(\Lambda)$.
Let
$$
R_\delta
=\sum_{\substack{T=(\Lambda,A,B)\\\delta_T\leq\delta}}
\mathcal K_{\delta-\delta_T}(\cf,\df^{(\ell_T)};q)
\mathcal B_{T,\delta},
$$
where $\mathcal K_{\delta-\delta_T}(\cf,\df^{(\ell_T)};q)$ is the universal polynomial in Lemma \ref{lem:poly-interior}(2), and $\mathcal B_{T,\delta}$ is given in equation \eqref{eq:poly-boundary}.
The gap hypothesis for $\delta$ in equation \eqref{eq:poly-gap-range} implies that
$$
c_r^i-c_r^{i+1}\geq\delta-\delta_T+1\quad(i<p_r),\qquad
c_l^{j+1}-c_l^j\geq \delta-\delta_T+1\quad(j<p_l).
$$
The truncation leaves the direction vector $\cf$ unchanged.
Thus, the same argument as that used in the proof of equation \eqref{eq:poly-relative} implies the required equality involving $N_{\floor}^{\Delta,g_\delta}(\eta,\mu;q)$.
For an extended template of cogenus $\delta_T$, the separate degree bounds are
$\delta$ in $\cf$ and $2\delta-\delta_T$ in $\df$, while the joint
degree is at most $3\delta-\delta_T$.
Only the empty template contributes in length degree $2\delta$ or
joint degree $3\delta$.
Its leading term in equation \eqref{eq:poly-relative-leading} has a nonzero
component of bidegree $(\delta,2\delta)$, proving all three equalities.
\end{proof}

\begin{proposition}[{\cite[Theorem 4.2 and Remark 4.3]{bg2016}}]
\label{prop:poly-ordinary}
Let $\eta=\varnothing$ and {$\mu=(1^{d_b})$, then} the following statements hold.
\begin{itemize}
\item Fix $\cf$ and $\delta\geq0$. For any $\Delta=\Delta(\cf,\df)$ satisfying condition
\eqref{eq:poly-range} with $\delta>0$, 
there is a polynomial $P_\delta^\cf\in\qb[q^{\pm1}][\df]$
such that
$$
N_{\floor}^{\Delta,g_\delta}(\varnothing,1^{d_b};q)=P_\delta^\cf(\df;q).
$$
\item Fix $p_r,p_l\geq1$ and $\delta\geq0$.
For any $\Delta=\Delta(\cf,\df)$ {satisfying conditions}
\eqref{eq:poly-range} and \eqref{eq:poly-gap-range} with $\delta>0$,
there is a polynomial $P_\delta\in\qb[q^{\pm1}][\cf,\df]$ such that
$$
N_{\floor}^{\Delta,g_\delta}(\varnothing,1^{d_b};q)=P_\delta(\cf,\df;q).
$$
\item $P_\delta^\cf(\df;q)$ and $P_\delta(\cf,\df;q)$ have degree $2\delta$ in $\df$. The polynomial $P_\delta(\cf,\df;q)$ has degree $\delta$ in $\cf$ and joint degree $3\delta$.
Their highest length degree part is
\begin{equation}\label{eq:poly-ordinary-leading}
[P_\delta]_{\df,2\delta}
=\frac1{\delta!}
\left((q+4+q^{-1})\frac{\area(\Delta)}2\right)^\delta.
\end{equation}
{The same formula holds for fixed directions, and} $P_0^\cf=P_0=1$.
\end{itemize}
\end{proposition}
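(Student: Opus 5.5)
The plan is to derive Proposition~\ref{prop:poly-ordinary} as the specialization $\eta=\varnothing$, $\mu=(1^{d_b})$ of Theorems~\ref{thm:poly-fixed} and~\ref{thm:poly-varying}. The proposition is the refined Ardila--Block polynomiality of \cite{bg2016}. In this case $a_w=0$ for all $w$, $b_1=d_b$, $b_w=0$ for $w\geq2$, and $L=l(\mu)=d_b$. The prefactor $\prod_w[w]_q^{b_w}$ equals $[1]_q^{d_b}=1$.

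There are two obstacles. First, the relative formula requires $l(\mu)=d_b\geq\delta$, which holds because \eqref{eq:poly-range} contains $d_b\geq\delta$. Second, the relative formula carries the factor $(d_b-\delta)!/d_b!$, which is not polynomial in $d_b$; this is the real issue. To handle it, I will not divide. Instead I will go back to the construction in the proof of Theorem~\ref{thm:poly-fixed}, apply Lemma~\ref{lem:poly-boundary}(i) with $b_1=L=d_b$, and simplify inside each summand. The relevant factor is
\[
\frac{(L-\delta)!}{L!}\,(b_1)_{\underline{\sum_i b_{i1}}}\,(L-\delta+1)^{\overline{\delta-\sum_i b_{i1}+h_+}}.
\]
Since $(L-\delta+1)^{\overline{r}}(L-\delta)!=(L-\delta+r)!$ and $(b_1)_{\underline s}=L!/(L-s)!$, this factor becomes $(L-s+h_+)!/(L-s)!=(L-s+1)^{\overline{h_+}}$, where $s=\sum_ib_{i1}$. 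That is a polynomial in $d_b$ of degree $h_+\leq s\leq\delta(B)$. Hence $\operatorname{ord}_{(\Lambda,A,B)}\mult_q(\Lambda)$ is itself polynomial in $d_b$, and $d_b$ is linear in $\df$ and bilinear in $(\cf,\df)$. The same vertex-count bound $\sum_j h_j+h_+\leq\delta(\Lambda,A,B)$ used in Lemma~\ref{lem:poly-boundary}(ii) gives length degree at most $\delta_T$ and direction degree at most $\delta_T$ for each boundary factor.

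Next I multiply by $\mathcal K^{\cf}_{\delta-\delta_T}(\df^{(\ell_T)};q)$, or by $\mathcal K_{\delta-\delta_T}(\cf,\df^{(\ell_T)};q)$ in the varying case, from Lemma~\ref{lem:poly-interior}. The applicability of these polynomials to the truncated polygon is supplied by Lemma~\ref{lem:poly-decomp}. Summing over the finitely many extended templates with $\delta_T\leq\delta$ then defines $P^\cf_\delta$ and $P_\delta$. The degree count is the same as in the proofs of Theorems~\ref{thm:poly-fixed} and~\ref{thm:poly-varying}: a nonempty $T$ contributes length degree at most $2\delta-\delta_T<2\delta$ and joint degree less than $3\delta$.

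For the empty template, the boundary factor is $1$, since $h_+=s=0$. Equivalently, $(d_b)_{\underline\delta}\cdot(d_b-\delta)!/d_b!=1$. So the top part equals $[\mathcal K_\delta]_{\df,2\delta}$, and Lemma~\ref{lem:poly-interior}(3) gives \eqref{eq:poly-ordinary-leading}. This is nonzero, so the degrees are exactly $2\delta$ in $\df$ and, in the varying case, $\delta$ in $\cf$ and $3\delta$ jointly. The last claim uses the bidegree $(\delta,2\delta)$ component of $\area(\Delta)^\delta$.

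Finally, for $\delta=0$ only the empty template and $K_0=1$ occur, giving $P_0^\cf=P_0=1$. I expect the main care point to be the cancellation of the factorial prefactor. Everything else is bookkeeping inherited from the earlier results.
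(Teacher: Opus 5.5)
Your proposal is correct and follows essentially the same route as the paper. The paper also restricts to templates with $A=0$ and $B$ supported in the first column, and shows termwise that $(d_b)_{\underline k}(d_b-\delta+1)^{\overline{\delta-k+h_+}}/h_+!=(d_b)_{\underline\delta}\binom{d_b-k+h_+}{h_+}$, i.e.\ that $R_\delta^\cf$ is divisible by $(d_b)_{\underline\delta}$; this is exactly your cancellation of the prefactor $(d_b-\delta)!/d_b!$, and the subsequent degree count and the leading-term extraction from $\mathcal K_\delta$ via Lemma~\ref{lem:poly-interior}(3) are the same as in the paper.
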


\begin{proof}
Take $\eta=\varnothing$, $\mu=(1^{d_b})$, and hence
$a_w=0$ for every $w$, $b_1=L=d_b\geq\delta$, and $b_w=0$ for $w>1$.
From Lemma \ref{lem:poly-decomp}, after splitting the $(\eta,\mu)$-marked $\Delta$-floor diagrams, only extended templates $T=(\Lambda,A,B)$ satisfying $A=0$ and with $B$ supported on the first column remain.
In this case, the factor in the right-hand side of equation \eqref{eq:poly-relative} becomes
$$
\left(\prod_{j=1}^{l(\mu)}[\mu_j]_q\right)
	\frac{(L-\delta)!}{\prod_{w\geq1}b_w!}=\frac{(d_b-\delta)!}{d_b!}=\frac{1}{(d_b)_{\underline\delta}}.
$$

In the following, we show that $R_\delta^\cf$ is divisible by $(d_b)_{\underline\delta}$.
Let $k=\sum_{i\geq1}b_{i1}\leq\delta$.
Recall that 
$$
R_\delta^\cf
=\sum_{\substack{T=(\Lambda,A,B)\\\delta_T\leq\delta}}
\mathcal K_{\delta-\delta_T}^\cf(\df^{(\ell_T)};q)
\mathcal B_{T,\delta},
$$
where $T=(\Lambda,A,B)$ is an extended template with $A=0$ and $B$ supported on the first column.
From equation \eqref{eq:poly-boundary}, every polynomial {$\mathcal B_{T,\delta}$ that contributes to $R_\delta^\cf$ has} a factor $(d_b)_{\underline k}
\frac{(d_b-\delta+1)^{\overline{\delta-k+h_+(\omega)}}}{h_+(\omega)!}$.
Note that
$$
(d_b)_{\underline k}
\frac{(d_b-\delta+1)^{\overline{\delta-k+h_+(\omega)}}}{h_+(\omega)!}
=(d_b)_{\underline\delta}\binom{d_b-k+h_+(\omega)}{h_+(\omega)}.
$$
Therefore, $R_\delta^\cf$ is divisible termwise by $(d_b)_{\underline\delta}$, and we have
$P_\delta^\cf=R_\delta^\cf/(d_b)_{\underline\delta}
\in\qb[q^{\pm1}][\df]$, 
$P_\delta=R_\delta/(d_b)_{\underline\delta}
\in\qb[q^{\pm1}][\cf,\df]$.
From Theorem \ref{thm:poly-fixed} and Theorem \ref{thm:poly-varying},
$N_{\floor}^{\Delta,g_\delta}(\varnothing,1^{d_b};q)=P_\delta^\cf(\df;q)$
under condition \eqref{eq:poly-range}, and
$N_{\floor}^{\Delta,g_\delta}(\varnothing,1^{d_b};q)=P_\delta(\cf,\df;q)$
under conditions \eqref{eq:poly-range} and \eqref{eq:poly-gap-range}.

After cancellation, the count of markings of extended templates contributes, up to its
	constant multiplicity and symmetry factor,
	\[
	\binom{d_b-\sum_{i\geq1}b_{i1}+h_+(\omega)}{h_+(\omega)}
	\prod_{j=1}^{\ell}\binom{S_j+h_j(\omega)}{h_j(\omega)}.
	\]
	Its degree in $\df$ or in $\cf$ is at most $h_+(\omega)+\sum_{j=1}^\ell h_j(\omega)=|E(\Lambda)|+\sum_{i\geq1}b_{i1}\leq \delta(\Lambda,A,B)$, and its joint degree is at most $2\delta(\Lambda,A,B)$.
    Combining this with the interior bounds from Lemma \ref{lem:poly-interior}, we find that the term in the polynomials \(P_\delta^\cf\) and \(P_\delta\) indexed by the extended template \((\Lambda,A,B)\) has length degree at most \(2\delta-\delta(\Lambda,A,B)\) and joint degree at most \(3\delta-\delta(\Lambda,A,B)\). 
Only the empty extended template contributes to degree \(2\delta\) or \(3\delta\), so the leading term lies solely in \(K^\cf_\delta\) and \(K_\delta\). The leading terms $[P_\delta]_{\df,2\delta}$ and $[P_\delta^\cf]_{\df,2\delta}$ follow from equation \eqref{eq:poly-interior-leading}.
\end{proof}

\section{Polynomiality for Gromov-Witten invariants}\label{sec:ordinary-polynomiality}
In this section, we prove the polynomiality of the combined invariants.

\subsection{Polynomiality for logarithmic invariants}
\label{sec:poly-log}

We show the polynomiality of logarithmic Gromov-Witten invariants using the polynomiality results in Section \ref{subsec:relative-poly}.

Fix $\delta,m\geq0$.  
Let $\Delta=\Delta(\cf,\df)$ be an $h$-transverse polygon with $d_b>0$ and
let $\eta,\mu$ be two partitions satisfying $|\eta|+|\mu|=d_b$.
Put
$$
\begin{aligned}
g'&=|\Delta^\circ\cap\zb^2|-\delta,&g(\cf,\df)&=g'+m,\\
\beta(\cf,\df)&=\beta_\Delta,&
n(\cf,\df;l(\mu))&=2M+d_t+l(\mu)+g'-1.
\end{aligned}
$$
The polygon class $\beta_\Delta$ is specified on each toric surface by
$\beta_\Delta\cdot D_\rho=d_\rho$.  When $\cf$ is fixed, this class varies linearly with the edge lengths of $\Delta$.
For fixed $\cf$, we suppress $\cf$ from $g,\beta,n$.

Let $s(u)=\frac{2\sin(u/2)}u$. Its Taylor expansion is
$$
s(u)=1-\frac{u^2}{24}+\frac{u^4}{1920}+O(u^6)
\in 1+u^2\qb[[u^2]].
$$
The constant term in this Taylor expansion is $1$, so the formal logarithm
$$
\log s(u)
=\sum_{k\geq1}\frac{(-1)^{k+1}}k\bigl(s(u)-1\bigr)^k
\in u^2\qb[[u^2]]
$$
is well-defined. Since $s(u)$ is even, there are unique rational
numbers $\gamma_r$ such that
$
\log s(u)=-\sum_{r\geq1}\gamma_r u^{2r}$.
In particular, $\gamma_1=1/24$ and $\gamma_2=1/2880$.
For a parameter $C$, we use the formal
convention $\exp\bigl(C\log s(u)\bigr)=s(u)^C$.
Let
$$
\begin{aligned}
S_{2r}&=\sum_{w\geq1}w^{2r}(a_w+b_w),\qquad r\geq1,\\
C_\delta&=2g'-2+2M+d_t=\area(\Delta)-d_b-2\delta.
\end{aligned}
$$
As in Section~\ref{sec:log-polynomiality}, $l(\mu),a_w,b_w,S_{2r}$ are
coefficient parameters for polynomiality statements.
Their relations to the contact partitions are imposed only upon evaluation.

\begin{theorem}\label{thm:log-poly-fix}\label{thm:log-poly-var}
Fix $\delta,m\geq0$. Let $L=l(\mu)$,
$\mathbf a=(a_1,\ldots,a_\delta)$,
$\mathbf b=(b_1,\ldots,b_\delta)$, and
$\mathbf S=(S_2,\ldots,S_{2m})$, where $a_i=m_i(\eta)$ and $b_j=m_j(\mu)$.
Regarding $L,\af,\bbf,\Sff$ as parameters, we have the following results.
\begin{itemize}
\item
Fix $p_r,p_l\geq1$ and $\cf\in\zb^{p_r+p_l}$. There is a polynomial
\[
\rl_{\delta,m}^\cf\in
\qb[\df,L,a_1,\ldots,a_\delta,b_1,\ldots,b_\delta,S_2,\ldots,S_{2m}]
\]
that satisfies the following identity for every $h$-transverse polygon $\Delta=\Delta(\cf,\df)$ satisfying condition \eqref{eq:poly-range} when $\delta>0$, and every pair of partitions $\eta,\mu$ with $|\eta|+|\mu|=d_b$ and $L\geq\delta$:

\begin{equation}\label{eq:log-poly}
N_{g(\df),\beta(\df),n(\df;L)}^{X_\cf|\partial X_\cf,m}(\eta,\mu)
=\left(\prod_{w\geq1}w^{b_w}\right)
\frac{(L-\delta)!}{\prod_{w\geq1}b_w!}\,
\rl_{\delta,m}^\cf(\df,L,\mathbf a,\mathbf b,\mathbf S).
\end{equation}
Moreover, the degree of $\rl_{\delta,m}^\cf$ in $\df$ is
$2(\delta+m)$, and its highest length degree part is
$$
[\rl_{\delta,m}^\cf]_{\df,2(\delta+m)}
=\frac{(-1)^m3^\delta(L)_{\underline\delta}}
{24^m m!\delta!}\,\area(\Delta)^{\delta+m}.
$$
\item
Fix $p_r,p_l\geq1$. There is a polynomial
\[
\rl_{\delta,m}\in
\qb[\cf,\df,L,a_1,\ldots,a_\delta,b_1,\ldots,b_\delta,S_2,\ldots,S_{2m}]
\]
that satisfies the following identity for every $h$-transverse polygon $\Delta=\Delta(\cf,\df)$ {satisfying conditions} \eqref{eq:poly-range} and \eqref{eq:poly-gap-range} when $\delta>0$, and every pair of partitions $\eta,\mu$ with $|\eta|+|\mu|=d_b$ and $L\geq\delta$:

\begin{equation}\label{eq:log-poly-var}
N_{g(\cf,\df),\beta(\cf,\df),n(\cf,\df;L)}^{X_\cf|\partial X_\cf,m}(\eta,\mu)
=\left(\prod_{w\geq1}w^{b_w}\right)
\frac{(L-\delta)!}{\prod_{w\geq1}b_w!}\,
\rl_{\delta,m}(\cf,\df,L,\mathbf a,\mathbf b,\mathbf S).
\end{equation}
One may choose $\rl_{\delta,m}$ with degree $\delta+m$ in
$\cf$, degree $2(\delta+m)$ in $\df$, and joint degree
$3(\delta+m)$.
Its homogeneous part of length degree $2(\delta+m)$ is given by the formula in the first statement, with $\cf$ regarded as variables.
\end{itemize}
\end{theorem}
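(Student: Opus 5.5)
The plan is to combine the disconnected correspondence Theorem~\ref{thm:log-trop-total} with the floor--tropical identification Theorem~\ref{thm:floor-trop}. Then I substitute the relative polynomial of Theorem~\ref{thm:poly-fixed} (respectively Theorem~\ref{thm:poly-varying}) and extract the coefficient of $u^{2g-2+\ell}$ with $g=g'+m$.

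Under \eqref{eq:poly-range} with $\delta>0$ we have $M\geq\delta+1>\delta$, so both theorems apply. We get
\[
\sum_{g\geq g'}N^{X_\cf|\partial X_\cf,g-g'}_{g,\beta_\Delta,n}(\eta,\mu)u^{2g-2+\ell}
=\Bigl(\prod_j\frac{[\eta_j]_q}{\eta_j}\Bigr)\Bigl(\prod_w[w]_q^{b_w}\Bigr)\frac{(L-\delta)!}{\prod_w b_w!}R_\delta^\cf(q)\bigl(u\,s(u)\bigr)^{2g'-2+\ell},
\]
where $(-i)(q^{1/2}-q^{-1/2})=2\sin(u/2)=u\,s(u)$.

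Next I rewrite every $q$-dependent factor as a power series in $u^2$ with polynomial coefficients. Since $[w]_q=w\,s(wu)/s(u)$, the contact factors become
\[
\prod_{w}w^{b_w}\cdot\prod_w\bigl(s(wu)/s(u)\bigr)^{a_w+b_w},
\]
after dividing the $\eta$-factors by $\eta_j$. The exponent of $s(u)$ then totals
\[
2g'-2+\ell-\sum_w(a_w+b_w).
\]
We have $\ell=2M+d_t+l(\eta)+l(\mu)$ and $l(\eta)+l(\mu)=\sum_w(a_w+b_w)$, so this exponent equals $2g'-2+2M+d_t=C_\delta$. The $u$-independent part is therefore
\[
s(u)^{C_\delta}\prod_w s(wu)^{a_w+b_w}=\exp\Bigl(-\sum_r\gamma_r u^{2r}\bigl(C_\delta+S_{2r}\bigr)\Bigr).
\]
This is exactly where the parameters $S_{2r}$ enter, and $C_\delta=\area(\Delta)-d_b-2\delta$ is polynomial in $\df$ of length degree two. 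Also $q=e^{iu}$, so each coefficient of $R^\cf_\delta(\df;q)$, a Laurent polynomial in $q$, expands in powers of $u$ with polynomial coefficients in $\df,L,\mathbf a,\mathbf b$. Only even powers survive, because $R_\delta^\cf$ is invariant under $q\mapsto q^{-1}$; this follows since $\mult_q$ and $\mathcal B$ are built from $[w]_q^2$.

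I then define $\rl^\cf_{\delta,m}$ as the $u^{2m}$ coefficient of
\[
R_\delta^\cf(\df,L,\mathbf a,\mathbf b;e^{iu})\exp\Bigl(-\sum_{r\leq m}\gamma_r u^{2r}(C_\delta+S_{2r})\Bigr).
\]
This is a finite computation, so the result lies in the stated polynomial ring, with $S_{2r}$ appearing only for $r\leq m$. Identity \eqref{eq:log-poly} follows by matching the coefficients of $u^{2g-2+\ell}$ on both sides.

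For the degree count, $R_\delta^\cf$ has $\df$-degree $\le2\delta$ in every $u$-coefficient. Each factor $u^{2r}$ from the exponential contributes length degree at most two, which bounds the total by $2\delta+2m$. The top length-degree part comes only from $[R^\cf_\delta]_{\df,2\delta}$ at $q=1$ together with $(-\gamma_1u^2\area(\Delta))^m/m!$. Any other use of the exponential or of higher $u$-coefficients of $R$ loses length degree, and $S_{2r},d_b,\delta$ have lower degree. With $q+4+q^{-1}=6$ at $u=0$, this gives
\[
\frac{(L)_{\underline\delta}}{\delta!}\bigl(3\area\bigr)^\delta\cdot\frac{(-1)^m\area^m}{24^m m!},
\]
which is the claimed leading term.

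The varying-$\cf$ case is identical using $R_\delta$ from Theorem~\ref{thm:poly-varying}. Here $C_\delta$ has direction degree $\le1$ and joint degree $\le3$, since $\area$ does by \eqref{eq:dadi-theta}. Each $u^2$ factor therefore adds at most $(1,2,3)$ to the (direction, length, joint) degrees, which yields $\delta+m$, $2(\delta+m)$ and $3(\delta+m)$.

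The main obstacle I expect is verifying that the $u$-expansion of $R_\delta(q)$ does not spoil the degree bounds. The higher $u$-coefficients of $R_\delta(q)$ must have length degree at most $2\delta+2k-2k'$ in a way consistent with the total. This holds because the $u$-expansion only acts on $\df$-independent coefficients in $\qb[q^{\pm1}]$, so it never raises the $\df$-degree; I would check this carefully. The case $\delta=0$ also needs separate handling: there the range hypothesis is absent, but $M\geq1>0$ still allows Theorem~\ref{thm:log-trop-total} to apply, and $R_0=1$.
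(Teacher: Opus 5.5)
Your proposal is correct and follows essentially the same route as the paper. Both arguments use Theorem~\ref{thm:log-trop-total} together with Theorem~\ref{thm:floor-trop}, substitute $R_\delta^\cf$ or $R_\delta$, and rewrite with $[w]_q=w\,s(wu)/s(u)$ and $2\sin(u/2)=u\,s(u)$ to obtain $s(u)^{C_\delta}\prod_w s(wu)^{a_w+b_w}=\exp\bigl(-\sum_r\gamma_r(C_\delta+S_{2r})u^{2r}\bigr)$; the degree count then shows that only the $u^0$-coefficient of $R$ together with the $(-\gamma_1C_\delta)^m/m!$ term reaches top degree.

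You skip two small checks, both easy:
\begin{itemize}
\item You need $g'\geq0$, which Theorem~\ref{thm:floor-trop} requires. It follows from Lemma~\ref{lem:poly-defect}(2) and $M\geq\delta+1$, since $g'=\sum_{j=1}^{M-1}(W_j-1)-\delta\geq\delta(2\delta-1)-\delta\geq0$.
\item In the varying case you only give upper bounds on the direction and joint degrees. For these to be exact, note that $\area(\Delta)^{\delta+m}$ has a nonzero component of bidegree $(\delta+m,2(\delta+m))$ in $(\cf,\df)$.
\end{itemize}
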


\begin{proof}
Let $R$ be the polynomial $R_\delta^\cf$ constructed in Theorem~\ref{thm:poly-fixed}, or $R_\delta$ constructed in Theorem~\ref{thm:poly-varying}. Put $n=n(\cf,\df;L)$.
Let
$$
\ell=|\partial\Delta\cap\zb^2|-d_b+l(\eta)+L
=2M+d_t+l(\eta)+L,
\qquad \kappa=2g'-2+\ell.
$$
For $\delta>0$, condition \eqref{eq:poly-range} gives $M\geq\delta+1$, and Lemma~\ref{lem:poly-defect}(2) gives
$$
g'=\sum_{j=1}^{M-1}(W_j-1)-\delta
\geq\delta(2\delta-1)-\delta\geq0.
$$
For $\delta=0$, the polygon has $M>0$ and $g'=|\Delta^\circ\cap\zb^2|\geq0$. Hence $\delta<M$ and $n=2M+d_t+L+g'-1\geq1$ in both cases. By Theorem~\ref{thm:log-trop-total}, we have
$$
\begin{aligned}
u^\kappa\sum_{v\geq0}
N_{g'+v,\beta_\Delta,n}^{X_\cf|\partial X_\cf,v}(\eta,\mu)
u^{2v}=\prod_{w\geq1}
\left(\frac{[w]_{q}}{w}\right)^{a_w}\cdot
N_{\mathrm{trop}}^{\Delta,g'}(\eta,\mu;q)
\bigl(2\sin(u/2)\bigr)^\kappa,
\end{aligned}
$$
where $q=e^{iu}$.
Note that $\delta<M$ from the assumption,
and $n=2M+d_t+L+g'-1\geq1$.

Theorem \ref{thm:floor-trop} identifies the
tropical count with $N_{\floor}^{\Delta,g'}(\eta,\mu;q)$.
From the corresponding formula in Theorem \ref{thm:poly-fixed} or Theorem~\ref{thm:poly-varying},
the right-hand side of the above equation is
$$
\begin{aligned}
&\prod_{w\geq1}
\left(\frac{[w]_{q}}{w}\right)^{a_w}
\prod_{w\geq1}[w]_{q}^{b_w}
\frac{(L-\delta)!}{\prod_{w\geq1}b_w!}\,
R(q)\bigl(2\sin(u/2)\bigr)^\kappa.
\end{aligned}
$$
The quantum integer satisfies
\[
[w]_{q}
=\frac{\sin(wu/2)}{\sin(u/2)}
=w\,\frac{s(wu)}{s(u)},
\qquad 2\sin(u/2)=u\,s(u).
\]
Substituting both identities and collecting all powers of $s(u)$
gives
$$
\begin{aligned}
&u^\kappa\sum_{v\geq0}
N_{g'+v,\beta_\Delta,n}^{X_\cf|\partial X_\cf,v}(\eta,\mu)
u^{2v}\\
&=u^\kappa\left(\prod_{w\geq1}w^{b_w}\right)
\frac{(L-\delta)!}{\prod_{w\geq1}b_w!}\,
s(u)^{\kappa-\sum_w(a_w+b_w)}
\prod_{w\geq1}s(wu)^{a_w+b_w}R(e^{iu}).
\end{aligned}
$$
Since $\sum_w a_w=l(\eta)$ and $\sum_w b_w=L$, we have
$\kappa-l(\eta)-L=2g'-2+2M+d_t=C_\delta$.

Cancelling the common monomial $u^\kappa$ yields
\begin{equation}\label{eq:poly-log-series}
\begin{aligned}
\sum_{v\geq0}
N_{g'+v,\beta_\Delta,n}^{X_\cf|\partial X_\cf,v}(\eta,\mu)
u^{2v}
=\left(\prod_{w\geq1}w^{b_w}\right)
\frac{(L-\delta)!}{\prod_{w\geq1}b_w!}\,
 s(u)^{C_\delta}
\prod_{w\geq1}s(wu)^{a_w+b_w}R(e^{iu}).
\end{aligned}
\end{equation}
Equation~\eqref{eq:poly-log-series} is an identity of formal power series. 
To extract the coefficient of $u^{2m}$ on its right-hand side, we use $\log s(u)=-\sum_{r\geq1}\gamma_r u^{2r}$.

Since $\eta,\mu$ have finitely many parts, $\prod_{w\geq1}s(wu)^{a_w+b_w}$ is a finite product.
We have
$$
\begin{aligned}
\log\left(s(u)^{C_\delta}
\prod_{w\geq1}s(wu)^{a_w+b_w}\right)
&=C_\delta\log s(u)
 +\sum_{w\geq1}(a_w+b_w)\log s(wu)\\
&=-\sum_{r\geq1}\gamma_r\left(
 C_\delta+\sum_{w\geq1}w^{2r}(a_w+b_w)\right)u^{2r}\\
&=-\sum_{r\geq1}\gamma_r(C_\delta+S_{2r})u^{2r}.
\end{aligned}
$$
Hence,
$
 s(u)^{C_\delta}\prod_{w\geq1}s(wu)^{a_w+b_w}
 =\exp\left(-\sum_{r\geq1}
 \gamma_r(C_\delta+S_{2r})u^{2r}\right)$.
Define the polynomials by coefficient extraction:
$$
\begin{aligned}
\rl_{\delta,m}^\cf
&:=[u^{2m}]\left[
\exp\left(-\sum_{r\geq1}\gamma_r(C_\delta+S_{2r})u^{2r}\right)
R_\delta^\cf(\df,L,\mathbf a,\mathbf b;e^{iu})\right],\\
\rl_{\delta,m}
&:=[u^{2m}]\left[
\exp\left(-\sum_{r\geq1}\gamma_r(C_\delta+S_{2r})u^{2r}\right)
R_\delta(\cf,\df,L,\mathbf a,\mathbf b;e^{iu})\right].
\end{aligned}
$$
From equation \eqref{eq:poly-log-series}, we have the equalities \eqref{eq:log-poly} and \eqref{eq:log-poly-var}.

We now show the polynomiality of $\rl_{\delta,m}^\cf$ and $\rl_{\delta,m}$. 
The template constructions of $R_\delta^\cf$ and $R_\delta$ use symmetric refined multiplicities, so $R(q)=R(q^{-1})$. Consequently, $R(e^{iu})$ is an even formal power series in $u$.
 Write
$$
\begin{aligned}
E(u)=&\exp\left(-\sum_{r\geq1}
\gamma_r(C_\delta+S_{2r})u^{2r}\right)
=\sum_{k\geq0}e_k u^{2k},\\
&R(e^{iu})=\sum_{j\geq0}\rho_j u^{2j}.
\end{aligned}
$$
If $R=R_\delta^\cf$ (resp. $R=R_\delta$), each $\rho_j$ is a polynomial
in the variables $\df$, $L$, $\af$, $\bbf$ (resp. $\cf$, $\df$, $L$, $\af$, $\bbf$) with the same {upper bounds on the degree} in $\df$ (resp. $\cf$, $\df$) as $R$.
In particular, $\rho_0=R(1)$.

The coefficient $e_k$ is a polynomial in $C_\delta,S_2,\ldots,S_{2k}$ of degree at most $k$ in $C_\delta$.
By its definition and the area formula in Section~\ref{subsec:relative-poly}, $C_\delta$ is polynomial in $\cf,\df$. Since $\gamma_1=1/24$, the term of $e_k$ of highest degree in $C_\delta$ is
$
\frac{1}{k!}\left(-\gamma_1C_\delta\right)^k
=\frac{(-1)^kC_\delta^k}{24^kk!}$.
Hence, $[u^{2m}]\bigl(E(u)R(e^{iu})\bigr)=\sum_{j=0}^m e_{m-j}\rho_j$
lies in $\qb[\df,L,\af,\bbf,\Sff]$ when $R=R_\delta^\cf$, and in $\qb[\cf,\df,L,\af,\bbf,\Sff]$ when $R=R_\delta$.

The polynomial $C_\delta$ has degree at most $2$ in $\df$, 
degree at most $1$ in $\cf$, and joint degree at most $3$.
For $R$, the degree bounds in $\df$, in $\cf$, and in $\cf,\df$  are $2\delta$, $\delta$, and $3\delta$, respectively.
Hence, the summand $e_{m-j}\rho_j$ in $[u^{2m}]\bigl(E(u)R(e^{iu})\bigr)$ has degree at most $2\delta+2(m-j)$ in $\df$, degree at most $\delta+m-j$ in $\cf$, and joint
degree at most $3(\delta+m-j)$.
Only the summand with $j=0$ can contribute to length degree $2(\delta+m)$ or, when the directions vary, to joint degree $3(\delta+m)$.
The homogeneous part of $C_\delta$ of length degree two is $\area(\Delta)$, so
$[e_m]_{\df,2m}=\frac{(-1)^m}{24^m m!}\area(\Delta)^m$.
By equation \eqref{eq:poly-relative-leading}, the term of $\rho_0=R(1)$ with highest degree in $\df$ is
$[R]_{\df,2\delta}\big|_{q=1}
=\frac{3^\delta(L)_{\underline\delta}}{\delta!}
\area(\Delta)^\delta$.
Hence, the coefficient of the term with highest degree in $\df$ of 
$[u^{2m}]\bigl(E(u)R(e^{iu})\bigr)$ is
$$
[\rl_{\delta,m}]_{\df,2(\delta+m)}=[\rl_{\delta,m}^\cf]_{\df,2(\delta+m)}
=\frac{(-1)^m3^\delta(L)_{\underline\delta}}
{24^m m!\delta!}\,\area(\Delta)^{\delta+m}.
$$
This homogeneous part is nonzero. With variable directions, 
it also has a nonzero component of bidegree $(\delta+m,2(\delta+m))$ in $(\cf,\df)$. 
No summand with $j>0$ can have this bidegree, so the required exact degree statements hold.
\end{proof}

\begin{corollary}\label{thm:log-simple-fix}\label{thm:log-simple-var}
{Fix $\delta,m\geq0$, then} the following statements hold.
\begin{itemize}
\item 
Fix $p_r,p_l\geq1$ and $\cf\in\zb^{p_r+p_l}$. There is a polynomial $\mathcal Q_{\delta,m}^\cf\in\qb[\df]$ such that, for every $h$-transverse polygon $\Delta=\Delta(\cf,\df)$ satisfying condition \eqref{eq:poly-range} when $\delta>0$,
$$
N_{g(\df),\beta(\df),n(\df)}^{X_\cf|\partial X_\cf,m}
=\mathcal Q_{\delta,m}^\cf(\df),
$$
where $g(\cf,\df)=|\Delta^\circ\cap\zb^2|-\delta+m$,$
\beta(\cf,\df)=\beta_\Delta$, $n(\cf,\df)=n(\cf,\df;d_b)
=2M+d_t+d_b+|\Delta^\circ\cap\zb^2|-\delta-1$.
Moreover, the degree of $\mathcal Q_{\delta,m}^\cf$ in $\df$ is
$2(\delta+m)$, and its highest length-degree part is
$$
[\mathcal Q_{\delta,m}^\cf]_{\df,2(\delta+m)}
=\frac{(-1)^m3^\delta}{24^m m!\delta!}
\area(\Delta)^{\delta+m}.
$$

\item
Fix $p_r,p_l\geq1$. There is a polynomial $\mathcal Q_{\delta,m}\in\qb[\cf,\df]$ such that, for every $h$-transverse polygon $\Delta=\Delta(\cf,\df)$ {satisfying conditions} \eqref{eq:poly-range} and \eqref{eq:poly-gap-range} when $\delta>0$,
\[
N_{g(\cf,\df),\beta(\cf,\df),n(\cf,\df)}^{X_\cf|\partial X_\cf,m}
=\mathcal Q_{\delta,m}(\cf,\df).
\]
One may choose $\mathcal Q_{\delta,m}$ with degree $\delta+m$
in $\cf$, degree $2(\delta+m)$ in $\df$, and joint degree
$3(\delta+m)$. 
Its homogeneous part of length degree $2(\delta+m)$ is given by the formula in the first statement, with $\cf$ regarded as variables.
\end{itemize}
\end{corollary}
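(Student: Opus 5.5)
The plan is to specialize Theorem~\ref{thm:log-poly-fix} to the contact data $(\eta,\mu)=(\emptyset,1^{d_b})$ and then cancel the prefactor. With these contacts we have $a_w=0$ for all $w$, $b_1=L=d_b$, $b_w=0$ for $w>1$, and $S_{2r}=\sum_w w^{2r}(a_w+b_w)=d_b$ for every $r$. The hypothesis $L\geq\delta$ of the theorem becomes $d_b\geq\delta$, which is part of condition \eqref{eq:poly-range} when $\delta>0$; for $\delta=0$ it is automatic. The number $n(\cf,\df;d_b)$ is exactly the $n$ in the statement. Equation \eqref{eq:log-poly} then reads
$$
N_{g,\beta_\Delta,n}^{X_\cf|\partial X_\cf,m}=\frac{(d_b-\delta)!}{d_b!}\,\rl_{\delta,m}^\cf(\df,d_b,0,(d_b,0,\ldots,0),(d_b,\ldots,d_b)).
$$
After substituting $d_b$ by its expression through the closure relation, the specialized $\rl$ is a polynomial in $\df$. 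The same holds for $\rl_{\delta,m}$, which becomes a polynomial in $(\cf,\df)$.

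The key step is to show that this specialized polynomial is divisible by $(d_b)_{\underline\delta}$, so that we may set $\mathcal Q^\cf_{\delta,m}$ equal to the quotient. Recall that $\rl_{\delta,m}^\cf=[u^{2m}]\bigl(E(u)R(e^{iu})\bigr)=\sum_j e_{m-j}\rho_j$. Here $E(u)$ depends only on $C_\delta$ and the $S_{2r}$, which specialize to polynomials in $\df$. All the divisibility must therefore come from $R$. By the proof of Proposition~\ref{prop:poly-ordinary}, after specialization $R_\delta^\cf=(d_b)_{\underline\delta}P_\delta^\cf$ with $P_\delta^\cf\in\qb[q^{\pm1}][\df]$; the same identity holds for $R_\delta$. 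Consequently each coefficient $\rho_j$ of $R(e^{iu})$ is $(d_b)_{\underline\delta}$ times the corresponding coefficient of $P(e^{iu})$. We therefore define
$$
\mathcal Q^\cf_{\delta,m}:=[u^{2m}]\bigl(E(u)P_\delta^\cf(\df;e^{iu})\bigr),
$$
with $E$ specialized, and define $\mathcal Q_{\delta,m}$ analogously using $P_\delta$. This is the main point to verify carefully. The subtle issue is that the degree of $\rl$ in $\df$ was computed with $L$ treated as an independent variable. The degree of $\mathcal Q$ must instead be recomputed directly from $P$, rather than by dividing $\rl$ by the degree-$\delta$ polynomial $(d_b)_{\underline\delta}$.

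The degree count then follows the argument in Theorem~\ref{thm:log-poly-fix}. By Proposition~\ref{prop:poly-ordinary}, $P$ has length degree $2\delta$, direction degree $\delta$, and joint degree $3\delta$. Under the specialization, $C_\delta+S_{2r}=C_\delta+d_b$. The term $d_b$ has length degree $1$ and joint degree at most $2$, so the top homogeneous parts are unchanged. In particular $e_k$ has degree at most $2k$ in $\df$, at most $k$ in $\cf$, and at most $3k$ jointly, and its top length part is $(-1)^k\area(\Delta)^k/(24^kk!)$. Only the summand with $j=0$ reaches length degree $2(\delta+m)$ or joint degree $3(\delta+m)$. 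For that summand, equation \eqref{eq:poly-ordinary-leading} at $q=1$ gives the top length part $3^\delta\area(\Delta)^\delta/\delta!$ of $\rho_0$. Multiplying yields the stated leading coefficient $\frac{(-1)^m3^\delta}{24^mm!\delta!}\area(\Delta)^{\delta+m}$. This coefficient is nonzero and has a nonzero component of bidegree $(\delta+m,2(\delta+m))$, which gives the exact degrees claimed in both statements.
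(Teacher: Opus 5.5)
Your proposal is correct and follows essentially the same route as the paper. You specialize to $(\eta,\mu)=(\emptyset,1^{d_b})$ and cancel the prefactor $1/(d_b)_{\underline\delta}$ via the factorization $R=(d_b)_{\underline\delta}P$ from Proposition~\ref{prop:poly-ordinary}, so that $E(u)$ becomes $s(u)^{\area(\Delta)-2\delta}$ and $\mathcal Q_{\delta,m}$ is the $u^{2m}$-coefficient of $s(u)^{\area(\Delta)-2\delta}P(e^{iu})$; your point that the degrees must be recomputed from $P$ rather than read off from $\rl_{\delta,m}$ (where $L$ was an independent variable) is exactly how the paper handles the degree statements.
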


\begin{proof}
For the partitions $(\eta,\mu)=(\varnothing,1^{d_b})$,
$L=b_1=d_b$, $a_w=0$ for all $w$,
$b_w=0$ for $w>1$, and $S_{2r}=d_b$ for every $r\geq1$.
Let $B_\delta=C_\delta+d_b$. Since $C_\delta=\area(\Delta)-2\delta-d_b$, we know $B_\delta=\area(\Delta)-2\delta$.
Let $R$ be the polynomial in equation \eqref{eq:poly-log-series}, and put $n=n(\cf,\df;d_b)$.

By Proposition~\ref{prop:poly-ordinary},
$R=(d_b)_{\underline\delta}P$, where $P=P_\delta^\cf$ for fixed
$\cf$ and $P=P_\delta$ for varying $\cf$. 
Since the factor $\left(\prod_{w\geq1}w^{b_w}\right)
\frac{(L-\delta)!}{\prod_{w\geq1}b_w!}$ in equation \eqref{eq:poly-log-series} specializes to
$(d_b-\delta)!/d_b!=1/(d_b)_{\underline\delta}$, we obtain
\begin{equation*}
\sum_{v\geq0}
N_{|\Delta^\circ\cap\zb^2|-\delta+v,\beta_\Delta,n}^{X_\cf|\partial X_\cf,v}
u^{2v}
=s(u)^{B_\delta}P(e^{iu}).
\end{equation*}

Define $\mathcal Q_{\delta,m}^\cf(\df)$ and $\mathcal Q_{\delta,m}(\cf,\df)$ as follows.
$$
\mathcal Q_{\delta,m}^\cf(\df)
:=[u^{2m}]\bigl[s(u)^{B_\delta}
P_\delta^\cf(\df;e^{iu})\bigr],
$$
and
$$
\mathcal Q_{\delta,m}(\cf,\df)
:=[u^{2m}]\bigl[s(u)^{B_\delta}
P_\delta(\cf,\df;e^{iu})\bigr].
$$
By comparing coefficients of $u^{2m}$ in the above equation, we get the required equalities in the two statements.
Polynomiality and the degree statements follow by the coefficient extraction argument in the proof of Theorem~\ref{thm:log-poly-var}, with $C_\delta+S_{2r}$ replaced by $B_\delta=\area(\Delta)-2\delta$ for every $r\geq1$, and $R$ replaced by $P$.
\end{proof}

\begin{remark}
The first two coefficient polynomials for fixed directions are
\[
\begin{aligned}
\mathcal Q_{\delta,0}^\cf
&=P_\delta^\cf(\df;1),\\
\mathcal Q_{\delta,1}^\cf
&=[u^2]P_\delta^\cf(\df;e^{iu})
-\frac{\area(\Delta)-2\delta}{24}P_\delta^\cf(\df;1).
\end{aligned}
\]
In particular, since $P_0^\cf=1$, for $\delta=0$ we obtain
\[
\mathcal Q_{0,m}^\cf
=[u^{2m}]s(u)^{\area(\Delta)}.
\]
The same formulas hold for varying directions, with
$P_\delta^\cf$ and $\mathcal Q_{\delta,m}^\cf$ replaced by
$P_\delta$ and $\mathcal Q_{\delta,m}$, respectively.
\end{remark}

\subsection{Comparison of ordinary and logarithmic invariants}\label{sec:poly-ordinary-comparison}
We use the degeneration strategy of \cite[Section 3]{arguz-bousseau-2025} to give a comparison of Gromov-Witten invariants and log Gromov-Witten invariants on smooth toric surfaces.

Let $\Delta=\Delta(\cf,\df)$ be a lattice polygon with smooth normal fan and $d_b>0$.
Let $X=X_\cf$ be the corresponding smooth projective toric surface, {and let $L_\Delta$ be} the ample toric line bundle on $X$ determined by $\Delta$.
Denote by $\beta=\beta_\Delta$ the cohomology class corresponding to $L_\Delta$, and by $\rho$ a ray of the normal fan of $\Delta$.
The boundary toric divisor of $X$ is $\partial X=\bigcup_\rho D_\rho$ , where $D_\rho$ corresponds to an edge of lattice length $d_\rho$. Note that $d_\rho=\beta\cdot D_\rho$.

\begin{lemma}\label{lem:poly-boundary-exclusion}
Let $\Delta=\Delta(\cf,\df)$ be a lattice polygon with smooth normal fan and $d_b>0$.
Fix an integer $0\leq\delta\leq|\Delta^\circ\cap\zb^2|$, and
let $n=|\Delta\cap\zb^2|-\delta-1$. Suppose that for every
boundary divisor $D_\rho$ we have $d_\rho\geq\delta$.
Then every effective cycle of class $\beta$ passing through $n$ general points in the dense torus of $X$ contains no toric boundary component.
\end{lemma}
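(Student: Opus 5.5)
We argue by contradiction, using a dimension count on linear systems. Suppose $Z$ is an effective cycle of class $\beta$ through $n$ general points of the dense torus, and write $Z=Z'+\sum_\rho k_\rho D_\rho$ with some $k_{\rho_0}\geq1$. Then $Z'$ is effective of class $\beta'=\beta-\sum_\rho k_\rho[D_\rho]$. Since the points lie in the torus, all $n$ of them must lie on $Z'$.

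The first step is to compute the dimension of the relevant linear systems. On the smooth toric surface $X$, every effective divisor class is represented by torus-invariant sections, so $h^0(X,\ol(Z'))$ is the number of lattice points of the possibly degenerate polygon $P_{Z'}$. This polygon is obtained from $\Delta$ by moving each edge inward by $k_\rho$, so that $P_{Z'}=\{x: \langle x,u_\rho\rangle\geq -a_\rho+k_\rho\}$. Curves of class $\beta'$ through $n$ general points exist only if $n\leq |P_{Z'}\cap\zb^2|-1$, because general points impose independent conditions on the linear system $|\ol(Z')|$. The second step is therefore to show
$$
|P_{Z'}\cap\zb^2|\leq |\Delta\cap\zb^2|-\delta-1<n+1,
$$
which gives the contradiction. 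It suffices to treat a single shift $k_{\rho_0}=1$: removing further divisors only shrinks the polygon, since $P_{Z'}\subseteq P_{\beta-D_{\rho_0}}$, and any $Z'$ of class $\beta'$ yields an effective divisor $Z'+(\text{rest})$ in the class $\beta-D_{\rho_0}$.

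The main point is then the count for $\beta-D_{\rho_0}$. The polygon $P_{\beta-D_{\rho_0}}$ is contained in the set of points of $\Delta$ whose pairing with $u_{\rho_0}$ is at least $-a_{\rho_0}+1$. This is $\Delta$ with the lattice points on the edge $E_{\rho_0}$ deleted, the edge line being exactly $\langle x,u_{\rho_0}\rangle=-a_{\rho_0}$. That edge contains $d_{\rho_0}+1\geq\delta+1$ lattice points. Hence
$$
|P_{\beta-D_{\rho_0}}\cap\zb^2|\leq|\Delta\cap\zb^2|-(\delta+1),
$$
and so $h^0(\ol(\beta-D_{\rho_0}))-1\leq n-1<n$. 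Thus no divisor in $|\beta-D_{\rho_0}|$ passes through $n$ general points.

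The step requiring care is the justification that general points in the torus impose independent conditions, i.e.\ that the locus of divisors in $|\beta'|$ through $n$ points has the expected codimension. This holds since the incidence variety argument works for any linear system and any points chosen outside its base locus. We also need the fact that on a toric surface $H^0(X,\ol(D))$ for any divisor $D=\sum_\rho a'_\rho D_\rho$ is spanned by characters in $P_D$. This fact gives the inclusion of lattice points used above without needing $\beta-D_{\rho_0}$ to be nef. Finally, the finitely many possible classes $\beta-D_\rho$ (one for each ray) allow a single generic choice of points, and the claim follows.
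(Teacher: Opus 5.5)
Your proposal is correct and follows essentially the same route as the paper: both show $\dim|L_\Delta-D_\rho|=\dim|L_\Delta|-d_\rho-1\leq n-1$, write a cycle containing $D_\rho$ as $D_\rho+C'$ with $C'\in|L_\Delta-D_\rho|$, and conclude that $C'$ cannot pass through $n$ general torus points. The only difference is that the paper obtains the count from surjectivity of $H^0(X,L_\Delta)\to H^0(D_\rho,L_\Delta|_{D_\rho})$, whereas you remove the $d_\rho+1$ lattice points of the edge directly, which is equivalent.
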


\begin{proof}
In the smooth toric surface $X$, {the boundary divisor satisfies $D_\rho\simeq\pb^1$} and $\deg(L_\Delta|_{D_\rho})=d_\rho$,
so we have
$
H^0(D_\rho,L_\Delta|_{D_\rho})
=H^0(\pb^1,\ol_{\pb^1}(d_\rho))$.
The toric monomial sections indexed by the lattice points on the corresponding edge restrict to a basis of $H^0(D_\rho,L_\Delta|_{D_\rho})$ (see \cite[Section 4.3]{cls-2011}). Thus the restriction map is surjective, and there is an exact sequence
$$
0\to H^0(X,L_{\Delta}(-D_\rho))\to H^0(X,L_\Delta)\to H^0(D_\rho,L_\Delta|_{D_\rho})\to 0.
$$
We get $h^0(X,L_{\Delta}(-D_\rho))=h^0(X,L_\Delta)-d_\rho-1$.
Therefore, $\dim|L_\Delta-D_\rho|
=\dim|L_\Delta|-d_\rho-1$.
Note that $d_\rho\geq\delta$, so we obtain $\dim|L_\Delta-D_\rho|\leq n-1$.

On the toric surface $X$, divisor classes are determined by their integral cohomology classes.
An effective cycle of class $\beta$ containing $D_\rho$ has the form $D_\rho+C'$ with $C'\in|L_\Delta-D_\rho|$.
Since the $n$ general points lie outside the boundary $\partial X$, $C'$ contains every one of them, which is a contradiction.
\end{proof}

\begin{proposition}\label{prop:poly-absolute-log}
Let $\Delta=\Delta(\cf,\df)$ be a lattice polygon with smooth normal fan and $d_b>0$.
Fix an integer $0\leq\delta\leq|\Delta^\circ\cap\zb^2|$, 
and suppose that for every
boundary divisor $D_\rho$ we have $d_\rho\geq\delta$.
For $m\geq0$, let $g=|\Delta^\circ\cap\zb^2|-\delta+m$, and $n=c_1(X_\cf)\cdot\beta_\Delta+g-m-1$.
Then, we have
$$
N^\cf_{g,\beta_\Delta,n}(m)
=N^{X_\cf|\partial X_\cf}_{g,\beta_\Delta,n}
  (m,\emptyset,1^{d_b}).
$$
\end{proposition}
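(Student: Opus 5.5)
The plan is to follow the degeneration strategy of \cite[Section 3]{arguz-bousseau-2025}. We degenerate $X=X_\cf$ to the normal cone of the full toric boundary $\partial X$. Torically, this means taking the fan of $X\times\mathbb{A}^1$ and subdividing it by the cone over a slightly shrunken copy of the fan. The result is a toric degeneration $\mathcal X\to\mathbb{A}^1$ with general fiber $X$. Its special fiber $\mathcal X_0$ is a union of three kinds of pieces:
\begin{itemize}
\item a copy $X^{\log}$ of $X$ whose boundary $\partial X$ is now the double locus;
\item for each ray $\rho$, the $\pb^1$-bundle $P_\rho=\pb(N_{D_\rho}\oplus\ol)$;
\item toric corner pieces over the torus fixed points.
\end{itemize}
The first check is the dimension bookkeeping. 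Here $n=c_1(X)\cdot\beta+g-m-1$ equals the logarithmic constraint $c_1\cdot\beta-d_b+l(\mu)+g-m-1$, because $l(1^{d_b})=d_b$. We specialize the $n$ point conditions to general points of the dense torus of $X^{\log}$. We then apply the decomposition formula of Abramovich--Chen--Gross--Siebert \cite{acgs-2020}, which by \cite{aw2018} does not depend on the toric modification chosen. This writes $N^\cf_{g,\beta,n}(m)$ as a sum over rigid tropical types, weighted by products of logarithmic invariants of the pieces glued along the double locus. The Hodge class is handled as follows. On a glued curve the Hodge bundle has a filtration whose graded pieces are the Hodge bundles of the components together with a trivial part of rank $h^1$ of the dual graph. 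Hence $\lambda_m$ is either zero or splits as $\sum\prod\lambda_{m_v}$ over vertices, and it vanishes on types whose dual graph has positive first Betti number.

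The second step reduces to the main component. Consider a contributing type, and let $C_0$ be the part of the curve mapping to $X^{\log}$. Since $C_0$ passes through the $n$ points, Lemma~\ref{lem:poly-boundary-exclusion} applies (this is where $d_\rho\geq\delta$ enters). It says that the image cycle of class $\beta$ through the points has no boundary component. We then use a curve-class count on $P_\rho$ and on the corner pieces: the degree pushed into these pieces must be a sum of fiber classes, together with possible sections. Any section-class component would force the total class in $X^{\log}$ to contain $D_\rho$, which the lemma excludes. So the bubble pieces carry only fiber classes, possibly multiply covered.

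The third step, which I expect to be the main obstacle, is showing that only the simplest type survives. In that type, every bubble is a degree-one genus-zero fiber of $P_\rho$, meeting $D_\rho$ and the far section transversally, with no Hodge insertion. The competing types have higher genus or multiple-cover fiber bubbles carrying some $\lambda_{m_v}$, or tangency of order $w>1$ of $C_0$ with $\partial X$. To exclude them I would first run a dimension count. The bubbles carry no point conditions. A fiber bubble of genus $h$, contact order $w$ and Hodge degree $m_v$ moves in a family whose virtual dimension exceeds the constraint it receives by $h+(\text{positions along }D_\rho)-m_v$. Meanwhile the main component then has too few free contacts to meet the $n$ points, by the same count as in the proof of Lemma~\ref{lem:poly-boundary-exclusion}. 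If the dimension count alone does not suffice, I would fall back on the $\cb^*$-scaling on the fibers of $P_\rho$, or on the vanishing of $\lambda$-twisted fiber-class invariants, as in \cite{arguz-bousseau-2025}.

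Finally, in the surviving type each trivial bubble has invariant $1$. The gluing multiplicity is $\prod$ (contact orders) $=1$. The $\prod_\rho d_\rho!$ orderings of the unlabelled contact points on the $D_\rho$, together with $|\aut(1^{d_b})|=d_b!$, cancel exactly against the normalizing factor $\frac{1}{|\aut(\mu)|\prod_{\rho\ne b}d_\rho!}$ in the definition of $N^{X_\cf|\partial X_\cf}_{g,\beta_\Delta,n}(m,\emptyset,1^{d_b})$. This gives the stated equality.
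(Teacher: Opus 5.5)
Your framework matches the paper's: degeneration to the normal cone of $\partial X$, the decomposition formula, Lemma~\ref{lem:poly-boundary-exclusion} to place all of $\beta_\Delta$ on the vertices over the main component, and the final automorphism count. However, the step you flag as the main obstacle is where the proof actually lives, and the one concrete tool you offer for it is incorrect. The Hodge filtration gives $\lambda_m=\sum\prod_v\lambda_{m_v}$ with $m_v\leq g_v$, so it only excludes types with $b_1(\Gamma)>g-m$. Here $g-m=|\Delta^\circ\cap\zb^2|-\delta$ is positive unless $\delta=|\Delta^\circ\cap\zb^2|$, so types with loops are \emph{not} killed for $m<g$.

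The missing ingredient is Bousseau's incidence--Hodge vanishing (Lemma~\ref{lem:poly-incidence-hodge}). Take a vertex $v$ over the main component with $m_v$ point conditions and $s_v$ contact flags. The tropical limits of the point-constrained curves have at least $m_v-s_v+1$ cycles, so $\lambda_{g_v'}$ vanishes on that vertex space unless $g_v-g_v'\geq m_v-s_v+1$. Sum over $V_0$, and use $g-m=b_1(\Gamma)+\sum_v(g_v-g_v')$ and $n=c_1(X)\cdot\beta_\Delta+g-m-1$. This yields $1\geq|V_0|+b_1(\Gamma)+\bigl(c_1(X)\cdot\beta_\Delta-s\bigr)$, where $c_1(X)\cdot\beta_\Delta-s=\sum_{(e,v),\,v\in V_0}(c_e-1)\geq0$. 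Here $c_e\geq1$ is the contact of the flag with the pullback of $\partial X$, and $c_e=1$ only for weight-one contacts along original rays.

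This single inequality forces three things at once:
\begin{itemize}
\item a single main vertex (you never treat types where the part over $X$ is disconnected and joined through bubbles);
\item a tree;
\item weight-one contacts along the original rays only.
\end{itemize}
The paper uses this vanishing on the global expanded space, precisely because the product gluing formula is only established for trees. The equality case then forces unit insertions at every flag of $v_0$ and point insertions on the bubble side. That is what pins each bubble over a general point of $D_\rho^\circ$ rather than over a corner. Your heuristic of ``too few free contacts, by the same count as in Lemma~\ref{lem:poly-boundary-exclusion}'' cannot substitute for this: that lemma is about linear systems, not virtual counts with tangencies and Hodge insertions.

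Second, no dimension count removes higher-genus bubbles. A genus-$h$ degree-one fiber bubble with the point insertion at its contact has exactly the virtual dimension absorbed by its forced insertion $\lambda_h$. The paper kills these as follows: the obstruction theory in the trivial normal direction of the fiber contributes $c_h(\eb^\vee)=(-1)^h\lambda_h$. The integrand is therefore $\pm\lambda_h^2$, which vanishes for $h>0$ by Mumford's relation. Your fallbacks ($\cb^*$-scaling, ``vanishing of $\lambda$-twisted fiber-class invariants'') point in this direction but do not supply the argument. Only after both reductions does the surviving type consist of genus-zero, degree-one fiber leaves, where your final normalization argument applies.
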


\begin{proof}
The proof of this proposition follows the ideas in \cite[Section 3.3]{arguz-bousseau-2025} (see also \cite[Theorem 4.7]{dh2026} for the case $m<g$).

Let $X=X_\cf$ be the toric surface determined by the normal fan $\Sigma_X$ of $\Delta$.
Denote by $\rho_1,\ldots,\rho_j$ the rays of the fan $\Sigma_X$ that are labeled clockwise.
Let $u_i$ be the primitive integral point on $\rho_i$.
For convenience, we put $u_{j+1}=u_1$.
We first add edges connecting $u_i$ and $u_{i+1}$ for $i\in\{1,\ldots,j\}$ to the fan $\Sigma_X$, then add a ray with primitive vector $u_{k+1}$ beginning at $u_k$ for $k=1,\ldots,j-1$ and a ray with primitive vector $u_{j}$ at $u_1$. Denote by $\pl_X$ the resulting polyhedral decomposition of $\rb^2$ after the above addition of edges and rays to the fan $\Sigma_X$.
The fan $\wt\Sigma_{\pl_X}$ over the polyhedral decomposition $\pl_X$ defines a toric variety $\xk$,
and the projection from $\wt\Sigma_{\pl_X}$ to the last coordinate defines a flat toric morphism $\pi:\xk\to\cb$ according to \cite[Section 3]{ns-2006}.
Hence, we obtain a toric degeneration $\pi:\xk\to\cb$.
Geometrically, the toric variety $\xk$ is obtained from $X\times\cb$ by successively blowing up the curves $D_{\rho_i}\times\{0\}$ (the proper transformations of $D_{\rho_i}\times\{0\}$ when $i>1$) contained in the distinguished central component, in the order $i=1,\ldots,j$.
Let $P\subset\rb^2$ be the bounded subcomplex of $\pl_X$ with vertices the points $0,u_1,\ldots,u_j$.
The central fiber $\xk_0:=\pi^{-1}(0)$ has dual intersection complex $P$, and the fiber $\xk_t:=\pi^{-1}(t)$, $t\neq0$, is isomorphic to the surface $X$.
Moreover, the central fiber $\xk_0$ can be decomposed into irreducible components as
$$
\xk_0=X\bigcup\left(\cup_{i=1}^jX_{\rho_i}\right),
$$
where $X_{\rho_1}$ is the blow-up of the toric $\pb^1$-bundle over the divisor $D_{\rho_1}$ in $X$ at its two toric corners,
$X_{\rho_j}$ is the toric $\pb^1$-bundle over the divisor $D_{\rho_j}$ in $X$, and $X_{\rho_i}$ is the blow-up of the toric $\pb^1$-bundle over the divisor $D_{\rho_i}$ in $X$ at a toric corner for $i=2,\ldots,j-1$.
There is a natural projection $\nu':\xk_0\to X$.

We equip the total space $\xk$ with the divisorial log structure defined by $\xk_0$, and equip the base $\cb$ with the divisorial log structure defined by $\{0\}\subset\cb$. Then the morphism $\pi:\xk\to\cb$ can be naturally lifted to a log smooth morphism.
The tropicalization of $\xk$ with the divisorial log structure is the cone over the bounded subcomplex $P$, where $P$ need not be convex.
We choose $n:=c_1(X_\cf)\cdot \beta_{\Delta}+g-m-1$ generic sections $\undl p(t)=(p_1(t),\ldots,p_n(t))$ of the family $\pi:\xk\to\cb$ such that the points $p_i(0)$ lie in the dense torus of the irreducible component of the central fiber isomorphic to $X$. Denote by $\undl p(t)$ the $n$ point constraints.
Note that the log structure on a generic fiber $\xk_t$ is the trivial log structure. By applying the decomposition formula in log Gromov-Witten theory of \cite[Theorem 5.4]{acgs-2020} (see also \cite[Theorem B]{ranganathan2022}) to the log smooth degeneration $\pi:\xk\to\cb$, one obtains that
\begin{equation}\label{eq:decomp}
N_{g,\beta_\Delta,n}^\cf(m)=\sum_{h:\Gamma\to P}\frac{n_h}{|\aut(h)|}N_{h,\log}^{\xk_0,m},
\end{equation}
where the sum is over the rigid decorated tropical curves $h:\Gamma\to P$ of total genus $g$, total class $\beta_\Delta$, and with $n$ labelled legs.
The integer $n_h$ is the smallest positive integer by which one must dilate the tropical map so that its vertex positions and the lengths of its source edges are integral, 
and $|\aut(h)|$ is the order of the group of automorphisms of $h$. 
$N_{h,\log}^{\xk_0,m}$ is the log Gromov-Witten invariant of the central fiber $\xk_0$ endowed with the restricted log structure from $\xk$.
Let $\overline{M}_h(\xk_0)$ denote the moduli space of $h$-marked stable log maps to $\xk_0$ passing through the $n$ point constraints imposed at the marked points corresponding to the $n$ legs of $\Gamma$ (see \cite[Definition 2.31]{acgs-2020}).
The invariant $N_{h,\log}^{\xk_0,m}$ is defined as
$$
N_{h,\log}^{\xk_0,m}=\int_{[\overline{M}_h(\xk_0)]^{vir}}(-1)^{m}\lambda_{m}.
$$
From the vanishing property of $\lambda_m$ introduced in \cite[Lemma 8]{bou2019}, we know that $N_{h,\log}^{\xk_0,m}=0$ unless the genus $g_\Gamma$ of the graph $\Gamma$ satisfies $g_\Gamma\leq g-m$.

In the following, we derive a gluing formula for the invariant $N_{h,\log}^{\xk_0,m}$. 
Refine the polyhedral decomposition of the bounded complex $P$ so that $h(\Gamma)$ is contained in its one-skeleton and the images of vertices are vertices. After a finite base change and further toric refinement, choose the components used below to be smooth. Denote by $\wt\xk_0$ the central fiber of the corresponding log modification of the toric degeneration $\pi:\xk\to\cb$.
We write $\nu:\wt\xk_0\to X$ for the composite of the modification map with $\nu'$. For the stars with edges $[0,u_i]$, we use the original model without further refinement.
Let $\wt X^w$ be the irreducible component of $\wt\xk_0$ {labeled by the vertex $w$} of the refined polyhedral decomposition of $P$.
We equip every irreducible component $\wt X^w$ with the divisorial log structure defined by the intersection of $\partial\wt X^w$ with the singular locus of $\wt\xk_0$. In particular, the log structure on $\wt X^0$ is defined by its full toric boundary.
The irreducible component $\wt X^0$ corresponding to the origin of the refined polyhedral decomposition is a toric blow-up of the irreducible component $X$ of $\xk_0$.

Given a vertex $v\in\Gamma$, let $h_v:\Gamma_v\to P$ be the rigid decorated tropical curve with one vertex obtained as the star of $v$ in $h:\Gamma\to P$. Each cut bounded edge gives a labelled relative marking.
The image $h(v)$ is a vertex of the refined polyhedral decomposition of $P$, so $h(v)$ corresponds to an irreducible component $\wt X^{h(v)}$ of $\wt\xk_0$. 
Let $\overline{M}_{h_v}(\wt X^{h(v)})$ denote the moduli space of $h_v$-marked stable log maps to $\wt X^{h(v)}$ passing through the point constraints imposed at the marked points corresponding to the legs of $\Gamma$ adjacent to $v$.
Suppose that $e\subset\Gamma$ is a bounded edge of $\Gamma$.
The image $h(e)$ is also contained in an edge of the refined polyhedral decomposition of $P$, so it corresponds to an irreducible component $\wt D^{h(e)}$ of the singular locus of the modified central fiber $\wt\xk_0$.
Denote by $1_{h(e)}\in H^0(\wt D^{h(e)};\qb)$ and $pt_{h(e)}\in H^2(\wt D^{h(e)};\qb)$ the unit and the class of a point in $\wt D^{h(e)}$, respectively. 
A {\it splitting datum} $\sigma$ assigns to every half-edge $(e,v)$ 
of a bounded edge of $\Gamma$ a cohomology class $\sigma_{e,v}\in H^*(\wt D^{h(e)};\qb)$ such that either $\sigma_{e,v}=1_{h(e)}$ and $\sigma_{e,v'}=pt_{h(e)}$, or $\sigma_{e,v}=pt_{h(e)}$ and $\sigma_{e,v'}=1_{h(e)}$, where $v$ and $v'$ are the two endpoints of the edge $e$.
Suppose that $v\in\Gamma$ is a vertex with genus decoration $g_v$ and class decoration $d_v$, and $\sigma$ is a splitting datum of $\Gamma$. 
Given a non-negative integer $g_v'\leq g_v$, let $N_{h,\sigma,\log}^{v,g_v'}$ be the log Gromov-Witten invariant defined as
\begin{equation}\label{eq:log-gw1}
N_{h,\sigma,\log}^{v,g_v'}:=\int_{[\overline M_{h_v}(\wt X^{h(v)})]^{vir}}(-1)^{g_v'}\lambda_{g_v'}\prod_{e\ni v}ev_e^*(\sigma_{e,v}),
\end{equation}
where $ev_e:\overline M_{h_v}(\wt X^{h(v)})\to \wt D^{h(e)}$ are the evaluation maps at the corresponding marked points, and the product is taken over the edges of $\Gamma$ adjacent to $v$ that are considered as legs of $\Gamma_v$.

From Proposition \ref{prop:poly-expanded-gluing}, we obtain the following gluing formula:
\begin{equation}\label{eq:glue}
N_{h,\log}^{\xk_0,m}=\sum_{\sigma,g_v'}\prod_ew_e\prod_{v}N_{h,\sigma,\log}^{v,g_v'},
\end{equation}
where the sum is taken over the splitting data $\sigma$ and integers $g_v'$ such that $0\leq g_v'\leq g_v$ and $\sum g_v'=m$, and $w_e$ are weights of the edges of $\Gamma$.
Proposition \ref{prop:poly-expanded-gluing} also proves that both sides of equation $(\ref{eq:glue})$ vanish unless $\Gamma$ is a star with a unique vertex $v_0$ of class $\beta_\Delta$ satisfying $h(v_0)=0$. It has $d_\rho$ leaves at $u_\rho$, each of fiber class, connected to the vertex $v_0$ by an edge of weight one.
For such a star the only potentially nonzero splittings have
$\sigma_{e,v_0}=1$, $\sigma_{e,v}=pt$ for $v\ne v_0$,
$g_{v_0}'=g_{v_0}-(g-m)$, $g_v'=g_v$ for $v\ne v_0$.

The invariants corresponding to leaves are computed by \cite[Lemma 15]{bou2019} and its proof.
We give a brief calculation here.
The relative point insertion fixes a fiber over $D_\rho^\circ$, where its normal bundle is trivial.
The obstruction theory in the normal direction contributes
$c_{g_v}(\eb_v^\vee)=(-1)^{g_v}\lambda_{g_v}$.
Together with the prescribed top Hodge insertion, this gives $\lambda_{g_v}^2$, which vanishes for $g_v>0$ by Mumford's relation \cite{mumford-1983}.
For $g_v=0$, the degree one relative fiber map is unique and has trivial automorphism group, so its contribution is one.
Thus every surviving leaf has genus zero, while $g_{v_0}=g$ and $g_{v_0}'=m$.

For this star, $n_h=1$ and $|\aut(h)|=\prod_\rho d_\rho!$.
The contribution of vertex $v_0$ is the logarithmic invariant with all simple boundary contacts labelled.
Equations (\ref{eq:decomp}) and (\ref{eq:glue}) give
$$
\begin{aligned}
N_{g,\beta_\Delta,n}^\cf(m)
&=\frac{1}{\prod_\rho d_\rho!}
\int_{[\overline M^{\log}_{g,n}
(X|\partial X,\beta_\Delta,\emptyset,\vec{1}^{\,d_b})]^{vir}}
(-1)^m\lambda_m\prod_{i=1}^n ev_i^*(pt)\\
&=N^{X_\cf|\partial X_\cf}_{g,\beta_\Delta,n}
(m,\emptyset,1^{d_b}).
\end{aligned}
$$

\end{proof}

The following vanishing result is a combination of \cite[Proposition 2, Lemma 8]{bou2019}.

\begin{lemma}[{\cite[Proposition 2, Lemma 8]{bou2019}}]\label{lem:poly-incidence-hodge}
Let $S$ be a smooth projective toric surface with toric boundary $\partial S$.
Let $\mathbf p=(p_1,\ldots,p_n)$ be $n$ general points in the dense torus of $S$.
Fix a nonzero curve class $\beta\in H_2(S;\zb)$, an integer $g\geq0$, and $s$ labelled relative markings with prescribed nonzero contact vectors along the boundary.
Let $\mathcal M(\mathbf p)$ be the proper moduli space of connected stable log maps of genus $g$ and class $\beta$, with $n$ ordinary markings mapping to the general points $\mathbf p=(p_1,\ldots,p_n)$ and all boundary contacts specified by the $s$ labelled relative markings.
{Let $q=\max\{0,n-s+1\}$, then} we have
\begin{itemize}
\item If $q\leq g$, then
$
\lambda_j|_{\mathcal M(\mathbf p)}=0
\text{ in }H^{2j}(\mathcal M(\mathbf p),\qb)
\text{ for }j>g-q.
$
\item If $q>g$, then $\mathcal M(\mathbf p)$ is empty.
\end{itemize}
\end{lemma}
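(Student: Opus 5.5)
The lemma combines two results of Bousseau \cite{bou2019}. My plan is to prove it by a dimension count on the moduli of underlying curves, followed by Mumford-type vanishing of the top Lambda classes on families of curves with a fixed rational component.

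\textbf{Step 1: emptiness for $q>g$.} Consider a stable log map in $\mathcal M(\mathbf p)$. Its image is a curve of class $\beta$ in $S$ meeting $\partial S$ only at the $s$ contact points, so its torus part has at most $s$ punctures. By \cite[Proposition 2]{bou2019}, for $n$ general points in the torus, the space of such image curves has dimension $\leq s-1+g_0$, where $g_0$ is the geometric genus of the image; this is the tropical count of parameters. Since a genus-$g$ source forces $g_0\leq g$, passing through $n$ general points requires $n\leq s-1+g$. Hence if $n-s+1>g$ the moduli space is empty. Here I use the rigidity of tropical curves through general points, which is the content of Proposition 2 there.

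\textbf{Step 2: Lambda vanishing for $q\leq g$.} When $n\geq s$, the point conditions cut the image to finitely many curves. Each such curve has geometric genus $g_0$ with $g_0\geq q=n-s+1$. For every map in $\mathcal M(\mathbf p)$, the source curve $C$ therefore contains a subcurve mapping onto image components whose normalizations have total genus at least $q$, and these normalizations vary in a finite set. The Hodge bundle then acquires a trivial subbundle $\eb'$ of rank at least $q$: pulling back holomorphic differentials from the fixed normalizations gives an injection $H^0(\tilde C_{\mathrm{img}},\omega)\hookrightarrow H^0(C,\omega_C)$, constant on each connected component of $\mathcal M(\mathbf p)$. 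Consequently $\eb\cong\eb'\oplus \eb''$ up to filtration, with $\eb'$ trivial of rank $\geq q$, so $c(\eb)=c(\eb'')$ has no terms in degree above $g-q$. This is the argument of \cite[Lemma 8]{bou2019}. When $n<s$ we have $q=0$, and the statement is vacuous since $\lambda_j=0$ for $j>g$ by rank.

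\textbf{Main obstacle.} The main obstacle is making the subbundle argument in Step 2 rigorous over the whole proper moduli space, including maps with contracted components and with components mapping to the boundary. To do this I would follow Bousseau: use the factorization through the normalization of the image in families, together with properness. The required injectivity persists under degenerations because the nonconstant components covering the fixed image carry the genus-$q$ part.
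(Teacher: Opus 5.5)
\textbf{There is a genuine gap.} Both of your steps rely on one claim: for every map in $\mathcal M(\mathbf p)$, the torus part of the image has at most $s$ punctures, so the image components not contained in $\partial S$ have total geometric genus at least $q$. This claim is false on strata of the proper log moduli space where some source components map into $\partial S$. Such components absorb part of the class, and they are attached to the interior part at nodes that are not markings.

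\textbf{A concrete counterexample to the claim.} Work on $\mathbb F_k$ with $k\geq 3$, and let $E$ be the negative section, so $E^2=-k$. Its neighbouring rays satisfy $u_1+u_3=k\,u_E$. Take all $s=\beta\cdot\partial S$ contacts simple, and consider maps with source $C_0\cup C_v$, where:
\begin{itemize}
\item $C_v\cong\pb^1$ maps isomorphically onto $E$ and carries the two markings at the corners $E\cap D_1$ and $E\cap D_3$;
\item $C_0$ is an interior curve of class $\beta-E$ and geometric genus $g_0$, glued to $C_v$ at $k$ of its $\beta\cdot E+k$ transverse points on $E$.
\end{itemize}
The vertex of $C_v$ is balanced, the contact data are the prescribed ones, and the arithmetic genus is $g=g_0+k-1$. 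However, $C_0$ has $(\beta-E)\cdot\partial S=s+k-2$ punctures, so for suitable $\beta,g_0$ it moves in a family of dimension $s+k-3+g_0$ and passes through $n=s+k-3+g_0$ general points. Then $q=n-s+1=g_0+k-2$, which is larger than $g_0$.

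On the component of $\mathcal M(\mathbf p)$ containing these maps (image cycle $C_0+E$), your trivial subbundle of pulled-back differentials has rank only $g_0$. That yields $\lambda_j=0$ only for $j>k-1$, whereas the lemma requires $j>g-q=1$. The missing $k-1$ sits in the cycles of the dual graph. The Hodge bundle has a trivial \emph{quotient} of rank $b_1$ of the dual graph, coming from residues at the nodes, and this is what \cite[Lemma 8]{bou2019} exploits; it is not an image-genus argument. Consequently your closing assertion, that ``the nonconstant components covering the fixed image carry the genus-$q$ part'', fails. The intermediate bound $n\leq s-1+g_0$ in Step 1 fails for the same reason.

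\textbf{The citations and the paper's route.} \cite[Proposition 2]{bou2019} is not a dimension bound for algebraic image curves. It states that a parametrized tropical curve through tropically general points satisfies $b_1(\Gamma)\geq n-s+1$. The paper uses it precisely to force all the genus into loops:
\begin{itemize}
\item a toric degeneration places the point conditions in tropical general position, so every source curve in the special fiber has a dual graph with at least $q$ independent cycles, or the special fiber is empty when $q>g$;
\item \cite[Lemma 8]{bou2019} then gives $\lambda_j=0$ for $j>g-q$ on the special fiber;
\item proper base change transfers both the vanishing and the emptiness to a nearby general fiber.
\end{itemize}

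\textbf{What a direct argument would need.} To argue directly on $\mathcal M(\mathbf p)$, you would need a uniform bound of the form (image genus)$\,+\,b_1(\text{dual graph})\geq q$ over all strata with boundary components. You would also need to control the dual-graph quotient in families where $b_1$ jumps. That uniform bound is essentially the tropical statement your proposal skips.
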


\begin{proof}
Denote by $\Delta$ the balanced collection formed by the prescribed $s$ contact vectors.
For $q=0$, the assertion follows from the rank bound of the Hodge bundle. 
In the following we consider the case $q>0$. 
Let $P_1,\ldots,P_n\in\rb^2$ be $n$ rational points in tropical general position for $\Delta$.
Choose a toric degeneration $\mathscr S\to\cb^1$ with a rational polyhedral decomposition containing the $P_i$ as vertices as in \cite[Section 4.2]{bou2019}. Choose general point sections specializing to the dense tori of the components indexed by the $P_i$. 
Equip the family with its full toric log structure.
Let $\mathscr M\to\cb^1$ be the proper moduli space of stable log maps to the fibers of $\mathscr S$ with the fixed data and these point constraints.
Let $\Gamma$ be a connected parametrized tropical curve of degree $\Delta$ passing through $P_1,\ldots,P_n$.
From \cite[Proposition 2]{bou2019}, using its genus bound and footnote 7 after discarding vertex genera and contracting contracted edges, we know
$
b_1(\Gamma)\geq n-s+1=q.
$
Hence every source curve in the special fiber of $\mathscr M\to\cb^1$ belongs to the locus of curves whose dual graph has at least $q$ independent cycles. By \cite[Lemma 8]{bou2019}, if $q\leq g$,
$\lambda_j=0$ for any $j>g-q$ in rational cohomology on the point-constrained special fiber.
If $q>g$, the special fiber is empty.

Proper base change gives a specialization map from the rational cohomology of the special fiber to that of a nearby fiber, compatible with the restriction of classes on the total family.
The Hodge bundle is defined on the total family, so its Chern classes specialize compatibly. Their vanishing on the special fiber therefore implies the asserted vanishing on a general nearby fiber. If the special fiber is empty, properness implies that the nearby fibers are empty.
The point constraints $\mathbf p$ can be chosen so that these conclusions hold simultaneously. 
\end{proof}

Proposition \ref{prop:poly-expanded-gluing} can be obtained as in the proof of \cite[Proposition 13]{bou2019}.
In Proposition \ref{prop:poly-expanded-gluing}, we use the notations in the proof of Proposition \ref{prop:poly-absolute-log} and assume its hypotheses.
\begin{proposition}\label{prop:poly-expanded-gluing}
Let $h:\Gamma\to P$ be a rigid decorated tropical curve that contributes to the sum in the right-hand side of equation $(\ref{eq:decomp})$, and let $V_0=\{v:h(v)=0\}$.
Then, for the integer $m$ and the corresponding number $n$ of point constraints fixed above, the gluing formula in equation $(\ref{eq:glue})$ holds.
Moreover, we have the following statements.
\begin{itemize}
\item Both sides of equation $(\ref{eq:glue})$ vanish unless the curve $\Gamma$ is a star with one vertex $v_0$ such that $h(v_0)=0$ and which has class decoration $\beta_\Delta$. Furthermore, for each ray $\rho$, $\Gamma$ has exactly $d_\rho$ leaves at $u_\rho$ of fiber class, with all edge weights one.
\item For a surviving star, a nonzero vertex product on the right-hand side of equation $(\ref{eq:glue})$ has the unit insertion at each flag adjacent to $v_0$ and the point insertion at the opposite flag, with
$g_{v_0}'=g_{v_0}-(g-m)$, and $g_v'=g_v$ for $v\ne v_0$.
\end{itemize}
\end{proposition}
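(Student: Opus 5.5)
The plan follows the proof of \cite[Proposition 13]{bou2019}, adapted to the expanded degeneration $\wt\xk_0$. The argument has three parts: establish the gluing formula \eqref{eq:glue}, use a dimension and cycle count to force the star shape, and then identify the surviving splitting data and genus distribution.

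\emph{Gluing formula.} After the refinement, $h(\Gamma)$ lies in the one-skeleton and the relevant components are smooth. I would cut $\overline M_h(\wt\xk_0)$ along the bounded edges of $\Gamma$. The cut presents the virtual class as the pullback, along the diagonal of $\prod_e \wt D^{h(e)}$, of the product of the vertex classes $[\overline M_{h_v}(\wt X^{h(v)})]^{vir}$, with multiplicity $\prod_e w_e$. This follows from \cite{acgs-2020}, or \cite{ranganathan2022} in the expanded setting, together with the torus-fixed rigidity of $h$. Each $\wt D^{h(e)}$ is $\pb^1$, so the Künneth decomposition of the diagonal is $1\otimes pt+pt\otimes 1$. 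This produces exactly the splitting data $\sigma$. The Hodge bundle of the glued curve is an extension of $\bigoplus_v\eb_v$ by a trivial bundle of rank $g_\Gamma$. Hence $\lambda_m$ pulls back to $\sum_{\sum g_v'=m}\prod_v\lambda_{g_v'}$, and this sum is nonzero only when $g_\Gamma\le g-m$. Together these give \eqref{eq:glue}.

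\emph{Vanishing unless $\Gamma$ is a star.} For a vertex $v$, let $n_v$ be its number of point legs and $s_v$ its number of flags carrying a unit insertion; flags with $pt$ insertions act as further point conditions. Lemma~\ref{lem:poly-incidence-hodge} kills $N_{h,\sigma,\log}^{v,g_v'}$ unless $g_v'\le g_v-\max\{0,n_v+\#pt_v-s_v-\cdots+1\}$, the correction being fixed by dimension matching. Summing over vertices, and using that every edge carries exactly one $pt$ and one $1$, the constraint $\sum g_v'=m$ forces equality in every vertex bound and also forces $g_\Gamma=g-m$ or smaller.

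The expected main obstacle is turning this equality into the star shape. The points $p_i(0)$ lie in $X^0$, so all point legs sit at vertices in $V_0$. For vertices outside $V_0$ in the components $X_{\rho}$, I would compare the class decoration $d_v$ with dimension. A curve in $X_\rho$ of non-fiber class, or meeting a fiber with weight $>1$, satisfies strictly fewer incidence conditions than a fiber. The only non-excluded vertices off $V_0$ are therefore degree-one fibers with a $pt$ insertion at their unique flag.

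Lemma~\ref{lem:poly-boundary-exclusion} then rules out any component of the $V_0$-part that is contained in $\partial X$. It also rules out any $V_0$-vertex of class other than $\beta_\Delta$ minus fibers. Here I use $d_\rho\ge\delta$: pushing forward by $\nu$, an effective cycle of class $\beta$ through $n$ general points contains no boundary component, so the non-fiber part of $\nu_*$ is a single curve through all $n$ points. If there were two vertices in $V_0$, the dimension count would fail, since the $n$ points cannot be distributed over them. This yields a unique $v_0$ carrying class $\beta_\Delta$. Its contacts with $D_\rho$ must total $d_\rho$, and each contact is resolved by a weight-one leaf at $u_\rho$, since higher weights would require non-fiber leaves, which are already excluded.

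\emph{Splitting and genus distribution.} The leaves receive $pt$ and $v_0$ receives $1$: if a leaf received the unit, its fiber would be unconstrained, giving a dimension excess that forces the invariant to vanish. The genus count then comes from the leaves. Each leaf has $n_v=0$, $s_v=0$ and one point condition, so Lemma~\ref{lem:poly-incidence-hodge} gives $g_v'=g_v$. The equality $\sum g_v'=m$ then gives $g_{v_0}'=g_{v_0}-(g-m)$, which is the second bullet of Proposition~\ref{prop:poly-expanded-gluing}.
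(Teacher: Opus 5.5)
The Hodge splitting and the K\"unneth decomposition of the diagonal are handled as in the paper. The gap is the decisive step: that a nonzero contribution forces $|V_0|=1$, $b_1(\Gamma)=0$ and weight-one contacts along the original rays. You assert that summing the vertex bounds ``forces equality in every vertex bound'' and that with two vertices in $V_0$ ``the $n$ points cannot be distributed over them.'' Dimension counting cannot justify this. Since $\sum_v c_1^{\log}(\wt X^{h(v)})\cdot d_v=c_1(X)\cdot\beta_\Delta$, the virtual dimensions of the point-constrained vertex spaces add up to $m+|E(\Gamma)|$ for every type. That is exactly the total degree of $\prod_v\lambda_{g_v'}$ together with the $|E(\Gamma)|$ point insertions, so the global count is always balanced.

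The missing idea is the contact inequality \eqref{eq:dim-count1}. Lemma~\ref{lem:poly-boundary-exclusion} gives $\nu_*d_v=0$ for $v\notin V_0$ and $\sum_{v\in V_0}\nu_*d_v=\beta_\Delta$; it does not say the image is a single curve. Write a contact at $v\in V_0$ as $w_e\wt u_e$ with $\wt u_e=a_eu_i+b_eu_{i+1}$. Then $\nu^*\partial X$ has coefficient $a_e+b_e$ along the corresponding divisor of $\wt X^0$, so $c_1(X)\cdot\beta_\Delta-s=\sum(c_e-1)\ge0$ with $c_e=w_e(a_e+b_e)$. Only this upper bound on $s$ makes the bound at $v\in V_0$ effective. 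That bound is $a_v=g_v-g_v'\ge m_v-s_v+1$, with $s_v$ counting all bounded-edge flags; it also follows from dimension matching at $v$, so your per-vertex route leads to the same place. Using $g-m=b_1(\Gamma)+\sum_va_v$ and $n=c_1(X)\cdot\beta_\Delta+g-m-1$ one gets
\[
1\ \ge\ |V_0|+b_1(\Gamma)+\bigl(c_1(X)\cdot\beta_\Delta-s\bigr),
\]
hence $|V_0|=1$, $b_1(\Gamma)=0$ and $c_e=1$ at every flag. Without this inequality, nothing excludes flags of $v_0$ of weight at least two, or pointing into the cells $[0,u_i,u_{i+1}]$ (contacts with exceptional divisors of $\wt X^0$, where $c_e\ge2$). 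Your claim that the contacts ``with $D_\rho$ total $d_\rho$'' silently ignores these. Nothing excludes off-$V_0$ vertices at interior vertices of the refined $P$ either. There $c_1^{\log}(\wt X^w)=0$, so your comparison with the ``incidence conditions of a fiber'' yields no contradiction.

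Several secondary steps also need repair.
\begin{itemize}
\item Lemma~\ref{lem:poly-incidence-hodge} concerns a toric surface with its full toric boundary and interior point conditions. It applies at $v\in V_0$, not to vertices in the components $X_\rho$ (which carry only the double-locus log structure), nor to point insertions at relative markings. It also only bounds $g_v'$ from above.
\item The equalities in the second bullet come from dimension instead. A fiber leaf has $\vdim=g_v+1$, so with its point insertion only $g_v'=g_v$ survives. And $\vdim\overline M_{h_{v_0}}(\wt X^0)=g_{v_0}-g+m=g_{v_0}'$ forces the unit insertion at every flag of $v_0$.
\item You postulate diagonal gluing with multiplicity $\prod_ew_e$ for an arbitrary rigid type, including $b_1(\Gamma)>0$. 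The vanishing of the left-hand side of \eqref{eq:glue} for non-star types then rests on an unjustified gluing statement. The paper avoids this: the cutting morphisms pull back the Hodge splitting, and Lemma~\ref{lem:poly-incidence-hodge} is a vanishing of classes on the whole point-constrained vertex space, independent of $\sigma$. So $\lambda_m$ already vanishes on the global space for non-star types, and expanded gluing is needed only for trees, where $A_h=1$ is checked directly.
\item The subtrees hanging off $v_0$ must be analyzed as a whole. Each has point image in $X$, pinned by its point insertion to a general point of $D_\rho^\circ$. Over that point the central fiber is one $\pb^1$ (a chain after modification), so incoming contact order one forces a single degree-one fiber leaf.
\end{itemize}
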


\begin{proof}

For $v\in V_0$, let $m_v$ be the number of legs of $\Gamma$ corresponding to point markings and $s_v$ the number of incident bounded edge flags. Let $s=\sum_{v\in V_0}s_v$.
All legs of $\Gamma$ corresponding to point markings are adjacent to vertices in $V_0$, so $\sum_{v\in V_0}m_v=n$.
A rigid type has no contracted bounded edge, so no bounded edge joins two vertices in $V_0$.

If $\overline M_h(\xk_0)$ or $\prod_{v}\overline M_{h_v}(\wt X^{h(v)})$ is nonempty, its maps determine an effective image cycle of class $\beta_\Delta$ through the $n$ general torus points.
For the product $\prod_{v}\overline M_{h_v}(\wt X^{h(v)})$, take the sum of the pushforward cycles of its factors.
For a vertex $v\in V_0$, the class $d_v$ with zero pushforward class cannot carry a nonzero boundary contact. 
Such a vertex has no incident bounded edge, so it cannot occur in a connected rigid type {of nonzero total degree}.
Thus for every $v\in V_0$ the class $d_v$ has nonzero pushforward class, and its point markings also lie on the effective image cycle.
From Lemma \ref{lem:poly-boundary-exclusion} we have
$$
\nu_*d_v=0 \text{ for } v\notin V_0,\text{ and }
\sum_{v\in V_0}\nu_*d_v=\beta_\Delta.
$$

For $v\in V_0$, the virtual dimension of $\overline M_{h_v}(\wt X^0)$ is
\begin{equation}\label{eq:dim-eq1}
\vdim\overline M_{h_v}(\wt X^0)
=g_v-1+s_v-m_v.
\end{equation}
For a flag $(e,v)$ with $v\in V_0$, write its contact vector as $w_e\wt u_e$.
In a smooth cone of the original fan, express the primitive vector $\wt u_e$ as
$\wt u_e=a_eu_i+b_eu_{i+1}$, $a_e,b_e\in\zb_{\geq0}$, and let $c_e=w_e(a_e+b_e)$.
The pullback of $\partial X$ to $\wt X^0$ has coefficient $a_e+b_e$ along the toric divisor of $\wt X^0$ corresponding to $\wt u_e$.
Note that $c_1(X)=\partial X$ and $\sum_{v\in V_0}\nu_*d_v=\beta_\Delta$.
Summing over the vertices in $V_0$, we have
\begin{equation}\label{eq:dim-count1}
c_1(X)\cdot\beta_\Delta=\sum_{v\in V_0}\sum_{e\ni v}c_e,\qquad
c_1(X)\cdot\beta_\Delta-s=\sum_{v\in V_0}\sum_{e\ni v}(c_e-1)\geq0.
\end{equation}
Here $c_e\geq1$, and $c_e=1$ precisely for a weight-one contact along a ray of the original fan.

The construction in \cite[Section 7.3, especially Lemma 7.3.1]{ranganathan2022} provides cutting morphisms to ordinary logarithmic vertex spaces, after proper virtual modifications.
Contraction of expansions preserves the Hodge bundles and ordinary evaluations, and the virtual pushforward property allows us to compute the original integral there.
Therefore, by \cite[Lemma 7]{bou2019}, we have
$$
(-1)^m\lambda_m
=\sum_{\substack{0\leq g_v'\leq g_v\\\sum_vg_v'=m}}
\prod_v(-1)^{g_v'}\lambda_{g_v'}.
$$
Let $a_v=g_v-g_v'$. From Lemma \ref{lem:poly-incidence-hodge}, the class $\lambda_{g_v'}$ vanishes on the entire {point-constrained vertex space} for $v\in V_0$ unless
$a_v\geq m_v-s_v+1$. 
If $m_v-s_v+1$ is negative, the inequality follows from $a_v\geq0$.
The same necessary inequalities hold for every nonzero product of the vertex invariants in equation (\ref{eq:glue}).
Since $g=b_1(\Gamma)+\sum_vg_v$, a potentially nonzero summand satisfies
$$
g-m=b_1(\Gamma)+\sum_va_v\geq b_1(\Gamma)+\sum_{v\in V_0}(m_v-s_v+1)=b_1(\Gamma)+n-s+|V_0|.
$$
Since $n=c_1(X)\cdot\beta_\Delta+g-m-1$, we have
\begin{equation}\label{eq:dim-count2}
1\geq |V_0|+b_1(\Gamma)+(c_1(X)\cdot\beta_\Delta-s).
\end{equation}
All three terms on the right are nonnegative and $|V_0|\geq1$, so we get
$$
|V_0|=1, b_1(\Gamma)=0, s=c_1(X)\cdot\beta_\Delta.
$$
From equation (\ref{eq:dim-count1}), $c_e=1$ at every flag $(e,v)$ with $v\in V_0$.
Let $v_0$ be the vertex in $V_0$.
The equality also implies that
$a_{v_0}=n-s+|V_0|=g-m$, and $a_v=0$ for $v\ne v_0$.
If $|V_0|\ne1$, $b_1(\Gamma)\ne0$, or $s\ne c_1(X)\cdot\beta_\Delta$, the insertion $\lambda_m$ vanishes on the expanded global space, and
every vertex product on the right of equation (\ref{eq:glue}) vanishes.
Thus both sides are zero.

We now use expanded gluing for a tree as in \cite[Section 7.5.3]{ranganathan2022}.
Cutting a bounded edge $e$ separates it into two connected pieces, with moduli spaces $K_1,K_2$ and independent evaluations to the logarithmic curve $\wt D^{h(e)}$.
The product evaluation to $\wt D^{h(e)}\times\wt D^{h(e)}$ is combinatorially flat, since each factor has target cones of dimension at most one.
After a common subdivision, the strict transform of the relative diagonal pushes forward to the ordinary diagonal.
Compatibility of the virtual classes with this pushforward gives ordinary diagonal gluing across $e$.
By induction over the bounded edges, we have the ordinary diagonal gluing for every bounded edge.

The ordinary diagonal decomposition
$$
[\Delta_{\wt D^{h(e)}}]
=1_{h(e)}\otimes pt_{h(e)}+pt_{h(e)}\otimes1_{h(e)}
$$
and tree gluing give 
\begin{equation}\label{eq:gluing11}
N_{h,\log}^{\xk_0,m}=A_h\sum_{\substack{\sigma,(g_v')\\0\leq g_v'\leq g_v,\ \sum_vg_v'=m}}
\prod_vN_{h,\sigma,\log}^{v,g_v'},
\end{equation}
where $A_h$ is the rational gluing multiplicity. 
For the vertex $v_0$, equation (\ref{eq:dim-eq1}) becomes
$$
\vdim\overline M_{h_{v_0}}(\wt X^0)
=g_{v_0}-1+c_1(X)\cdot\beta_\Delta-n=g_{v_0}-g+m=g_{v_0}'.
$$
Thus every nonzero term of the {right-hand side} of equation (\ref{eq:gluing11}) has the unit insertion at all flags adjacent to $v_0$ and the point class insertion at the flags adjacent to leaves.

Represent the point insertions at the flags adjacent to leaves by general points of the open toric orbits $D_\rho^\circ$.
Remove $v_0$ from the tree and let $T$ be one of the remaining connected subtrees.
Every component of the resulting connected map has zero pushforward class under $\nu$, so its image in $X$ is a single point.
The insertion at its attaching flag forces this point to be the chosen general point $p\in D_\rho^\circ$.
Over $p$, the original central fiber is the single ruled-surface fiber $\pb^1\subset X_\rho$.
The toric corner blowups do not meet this fiber, and further target modifications replace it only by a chain. Since the incoming contact order is one, the degree along this chain is one.
It follows that, in the original complex, $T$ is a single leaf at $u_\rho$, with fiber class and {a weight-one attaching edge}.
There is no other type of $T$ with non-trivial contribution to equation (\ref{eq:gluing11}).
Consequently, $N_{h,\log}^{\xk_0,m}$ vanishes unless $\Gamma$ is a star with exactly $d_\rho$ leaves in each direction.
For these stars we use the original reduced model. The contact orders are all one, so logarithmic enhancement introduces no nontrivial choice for $v_0$. 
All tropical vertices of $\Gamma$ are integral and every source edge has integral length one.
Thus the primitive basic ray has index one and the labelled gluing multiplicity is $A_h=1$.
Since all weights are one, $A_h=\prod_ew_e$ for the surviving stars.
For every other type both sides of equation (\ref{eq:glue}) vanish, so the equality holds.
\end{proof}

\begin{corollary}\label{cor:poly-absolute-log-combined}
Let $\Delta=\Delta(\cf,\df)$ be a lattice polygon with smooth normal fan and $d_b>0$.
Fix an integer $\delta\in\{0,\ldots,|\Delta^\circ\cap\zb^2|\}$, 
and suppose that for every
boundary divisor $D_\rho$ we have $d_\rho\geq\delta$.
For $m\geq0$, let $g=|\Delta^\circ\cap\zb^2|-\delta+m$, and $n=c_1(X_\cf)\cdot\beta_\Delta+g-m-1$.
Then, we have the following equality of the combined invariants:
$$
N^{\cf,m}_{g,\beta_\Delta,n}
=N^{X_\cf|\partial X_\cf,m}_{g,\beta_\Delta,n}.
$$
\end{corollary}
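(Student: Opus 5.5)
The plan is to compare the two combined invariants summand by summand, with Proposition~\ref{prop:poly-absolute-log} applied to each connected component. By \eqref{eq:comb-gw}, $N^{\cf,m}_{g,\beta_\Delta,n}$ is a sum over set partitions $\Lambda=\bigsqcup\Lambda_i$ and triples $(\Delta_i,g_i,m_i)$ of products $\prod_i N^{\cf}_{g_i,\beta_i,n_i}(m_i)$. By Definition~\ref{def:comb-log-gw} with $(\eta,\mu)=(\emptyset,1^{d_b})$, $N^{X_\cf|\partial X_\cf,m}_{g,\beta_\Delta,n}$ is a sum over the same set partitions of products $\prod_i N^{X_\cf|\partial X_\cf}_{g_i,\beta_i,n_i}(m_i,\emptyset,\mu_i)$.

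First I would match the indexing sets. Since $\eta=\emptyset$, every $\varPi_i$ is empty and $\mu_i=1^{d_{b,i}}$ is forced. The constraint $n_i=c_1\cdot\beta_i-d_{b,i}+l(\mu_i)+g_i-m_i-1$ then becomes $n_i=c_1\cdot\beta_i+g_i-m_i-1$, which is the constraint in \eqref{eq:comb-gw-cond}. The genus and $m$ conditions are identical. So the two sums run over the same data $(\Lambda_i,\Delta_i,g_i,m_i)$, with one caveat: the log definition allows segment summands and $d_{b,i}=0$, whereas \eqref{eq:comb-gw} uses $h$-transverse summands. Because the normal fan of $\Delta$ refines that of every $\Delta_i$, each summand defines a nef class $\beta_i$ on $X_\cf$ in both settings, so I expect the index sets to agree. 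Degenerate summands would be handled by showing that they contribute zero, or contribute equally, on both sides.

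Second, for each component I want
\[
N^{\cf}_{g_i,\beta_i,n_i}(m_i)=N^{X_\cf|\partial X_\cf}_{g_i,\beta_i,n_i}(m_i,\emptyset,1^{d_{b,i}}).
\]
This is the main obstacle. Proposition~\ref{prop:poly-absolute-log} assumes that $\Delta_i$ has the smooth fan of $X_\cf$ (so $L_{\Delta_i}$ is ample), that $d_{b,i}>0$, and that $d_{\rho,i}\ge\delta_i$ with $\delta_i=|\Delta_i^\circ\cap\zb^2|-(g_i-m_i)$. None of these is automatic for a summand.

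I would first use the cogenus identity $\delta=\sum_i\delta_i+\sum_{i<j}\ml(\Delta_i,\Delta_j)$ from Section~\ref{sec:floor-diagrams}. It gives $\delta_i\le\delta$, but it does not give $d_{\rho,i}\ge\delta_i$. So rather than quoting the proposition verbatim, I would rerun its proof for a nef class $\beta_i$ on $X_\cf$. The degeneration, Lemma~\ref{lem:poly-incidence-hodge}, and the dimension count \eqref{eq:dim-count2} go through for a nef class. The only place where ampleness and the length bound enter is Lemma~\ref{lem:poly-boundary-exclusion}. For rays with $d_{\rho,i}=0$, no leaves appear at $u_\rho$, consistent with the log side having no contacts there. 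For the remaining rays, the exclusion argument needs $\dim|L_{\Delta_i}-D_\rho|\le n_i-1$, that is, $d_{\rho,i}\ge\delta_i$.

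If this fails for some component, my fallback is a dimension argument: show that both sides vanish, using the incidence bound of Lemma~\ref{lem:poly-incidence-hodge} together with the negative expected dimension of curves through $n_i$ points containing $D_\rho$. Alternatively, I would read the corollary as being applied only in the later range where these bounds hold, namely the sufficiently large regime of Theorem~\ref{thm:ordinary-poly-fixed}.

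Granting the componentwise equality, multiplying over $i$ and summing over the common index set gives the corollary.
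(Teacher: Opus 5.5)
\textbf{Verdict: genuine gap.} Your matching of the two index sets is fine. The proof breaks at the step you flag yourself: the componentwise identity $N^{\cf}_{g_i,\beta_i,n_i}(m_i)=N^{X_\cf|\partial X_\cf}_{g_i,\beta_i,n_i}(m_i,\emptyset,1^{d_{b,i}})$ is never established. Rerunning Proposition~\ref{prop:poly-absolute-log} for a summand needs the boundary exclusion of Lemma~\ref{lem:poly-boundary-exclusion} for the class $\beta_i$ and $n_i$ points. That exclusion is equivalent to $d_{\rho,i}\ge\delta_i$, and you have no argument for it.

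Your first fallback rests on a false premise. If $d_{\rho,i}<\delta_i$, then $\dim|L_{\Delta_i}-D_\rho|=\dim|L_{\Delta_i}|-d_{\rho,i}-1\ge n_i$, since $n_i=\dim|L_{\Delta_i}|-\delta_i$. So curves $D_\rho+C'$ through $\Lambda_i$ do exist, with nonnegative (not negative) expected dimension, and nothing intrinsic to the $i$-th factor removes them. Your second fallback changes the statement being proved.

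The missing idea is to apply the exclusion lemma to the \emph{total} cycle, not to each factor. Take a tuple $(C_1,\ldots,C_t)$ in the product of the point-constrained moduli spaces, or of their central-fiber versions after degenerating all factors simultaneously. Then $\sum_i C_i$ has class $\beta_\Delta$ and passes through all $n=\sum_i n_i$ general points. The hypothesis $d_\rho\ge\delta$ on $\Delta$ alone therefore shows that no $C_i$ contains a boundary divisor, with no condition on the $\Delta_i$.

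The paper then runs the Hodge-vanishing count of Proposition~\ref{prop:poly-expanded-gluing} on the disjoint union $\Gamma$ of the $t$ decorated graphs:
\begin{itemize}
\item $|V_0|\ge t$, because every $\Lambda_i\neq\emptyset$;
\item $g=b_1(\Gamma)+\sum_v g_v+1-t$;
\item $n=c_1(X_\cf)\cdot\beta_\Delta+g-m-1$.
\end{itemize}
Together these give $t\ge|V_0|+b_1(\Gamma)+(c_1(X_\cf)\cdot\beta_\Delta-s)$. This forces $|V_0|=t$, $b_1(\Gamma)=0$ and $c_e=1$, so each component is a star with genus-zero, weight-one fiber leaves. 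The leaf factorials $\prod_\rho d_{\rho,i}!$ then match the logarithmic normalizations.

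The same observation also repairs your componentwise structure, if you prefer to keep it. Split on the signs of the $\delta_j$.
\begin{itemize}
\item If some $\delta_j<0$, then $n_j>\dim|L_{\Delta_j}|$. Hence no effective divisor of class $\beta_j$ passes through $\Lambda_j$, and both the ordinary and the logarithmic moduli spaces of the $j$-th factor are empty. That summand is $0$ on both sides.
\item If all $\delta_j\ge0$, choose any $C_j\in|L_{\Delta_j}|$ through $\Lambda_j$ for $j\neq i$, and apply Lemma~\ref{lem:poly-boundary-exclusion} to $C_i+\sum_{j\neq i}C_j$. This gives exclusion for the $i$-th factor, equivalently $d_{\rho,i}\ge\delta_i$, which is exactly the bound you could not derive. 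After that, your nef-class rerun of Proposition~\ref{prop:poly-absolute-log} applies to each factor.
\end{itemize}
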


\begin{proof}
The combined Gromov-Witten invariants $N^{\cf,m}_{g,\beta_\Delta,n}$ are defined in equation \eqref{eq:comb-gw}.
Fix a partition $\Lambda=\Lambda_1\cup\cdots\cup\Lambda_t$ and $t$ triples $(\Delta_i,g_i,m_i)$ 
satisfying the condition \eqref{eq:comb-gw-cond}.
Let $\beta_{i},n_i$ be the data determined by the partition and the triple $(\Delta_i,g_i,m_i)$.
Take a summand $\prod_{i=1}^tN_{g_i,\beta_{i},n_i}^{\cf}(m_i)$ in equation \eqref{eq:comb-gw}.
Apply the degeneration in the proof of Proposition \ref{prop:poly-absolute-log} simultaneously 
to these factors $N_{g_i,\beta_{i},n_i}^{\cf}(m_i)$ with data $(g_i,\beta_{i},n_i,m_i)$. 
All ordinary point constraints are chosen generally on $X$. 
Apply Lemma \ref{lem:poly-boundary-exclusion} to the sum of the pushforward cycles of the $t$ components, 
whose class is $\beta_\Delta$ and which passes through all $n$ points. 
Thus no boundary component occurs in this total cycle. 
Note that this does not require the individual classes of the $t$ components to satisfy the edge-length hypothesis.
The component classes may include nef segment classes: each is nonzero and carries $n_i\geq1$ ordinary point conditions.

Use the notation $\Gamma,V_0,s,m_v,g_v,g_v'$ of Proposition \ref{prop:poly-expanded-gluing} 
for the disjoint union of the $t$ decorated graphs. 
Since $\Lambda_i\neq\emptyset$, every connected component carries an 
ordinary point condition and $|V_0|\geq t$. The arithmetic genus of $\Gamma$ is 
$g=b_1(\Gamma)+\sum_vg_v+1-t$. 
The same Hodge-vanishing inequalities at the vertices in $V_0$ give
$$
g-m\geq b_1(\Gamma)+n-s+|V_0|+1-t.
$$
Using $n=c_1(X)\cdot\beta_\Delta+g-m-1$, we obtain
$$
t\geq |V_0|+b_1(\Gamma)+\bigl(c_1(X)\cdot\beta_\Delta-s\bigr).
$$
The last two terms are nonnegative by equation \eqref{eq:dim-count1}. 
Hence, $|V_0|=t$, $b_1(\Gamma)=0$, and $c_e=1$ for every flag $(e,v)$ with $v\in V_0$. 
Each connected component of $\Gamma$ has one vertex in $V_0$. 
The separating-edge gluing and fiber calculation in the proof of Proposition 
\ref{prop:poly-absolute-log} now apply to each component, 
giving a star with genus-zero leaves.

The factors $\prod_\rho d_{\rho,i}!$ arising from leaf automorphisms 
give exactly the normalization of the connected logarithmic factors. 
Finally, the unordered partitions of the ordinary point labels, 
the component classes, and the Hodge classes agree in the two definitions 
when $\eta=\emptyset$ and $\mu=1^{d_b}$. Summing over all the summands, we obtain the required equality.
\end{proof}

\begin{remark}\label{rem:poly-delpezzo-comparison}
For a smooth toric del Pezzo surface, the known connected correspondences \cite[Theorem 3.9]{arguz-bousseau-2025} and \cite[Theorems 4.6--4.7]{dh2026} also give the equalities in Proposition \ref{prop:poly-absolute-log} and Corollary \ref{cor:poly-absolute-log-combined} without the edge-length hypothesis $d_\rho\geq\delta$.
\end{remark}

\subsection{Polynomiality for Gromov-Witten invariants}
\label{sec:poly-ordinary-gw}
We prove the polynomiality of combined Gromov--Witten invariants and the asymptotics of connected invariants.
Let $\Delta=\Delta(\cf,\df)$ be a lattice polygon with smooth normal fan and $d_b>0$.
For fixed $\delta,m$, we use the following notations:
$$
\begin{aligned}
g(\cf,\df)&=|\Delta^\circ\cap\zb^2|-\delta+m,~
\beta(\cf,\df)=\beta_\cf(\df)=\beta_\Delta,\\
n(\cf,\df)&=\ell_0+|\Delta^\circ\cap\zb^2|-\delta-1
 =\frac{\area(\Delta)+\ell_0}{2}-\delta, ~\ell_0=2M+d_t+d_b.
\end{aligned}
$$
The numbers $g,n$ have degree at most two in $\df$ and at most one in $\cf$.

\begin{theorem}\label{thm:ordinary-poly-fixed}
Fix $\cf$ with smooth normal fan and integers $\delta,m\geq0$. There exists a polynomial $\ql_{\delta,m}^\cf(\df)$ such that, for every sufficiently large and spread out $\df$ (see equation $(\ref{eq:poly-range})$),
$$
N^{\cf,m}_{g(\df),\beta(\df),n(\df)}
=\mathcal Q_{\delta,m}^\cf(\df).
$$
The polynomial $\mathcal Q_{\delta,m}^\cf$ is given in Corollary \ref{thm:log-simple-var}.
Moreover, its degree in $\df$ is $2(\delta+m)$, and its highest homogeneous part is given in Corollary \ref{thm:log-simple-var}.
For $\delta=0$, we only need $d_b>0$ to obtain the polynomial.
\end{theorem}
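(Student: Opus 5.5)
The proof combines the ordinary/logarithmic comparison with the logarithmic polynomiality already established. First I check that the hypotheses of Corollary~\ref{cor:poly-absolute-log-combined} hold. Condition \eqref{eq:poly-range} gives $d_t,d_b\geq\delta$ and $d_r^i,d_l^j\geq\delta+1$. These are the lattice lengths of every edge of $\Delta(\cf,\df)$, so $d_\rho\geq\delta$ for every boundary divisor $D_\rho$. When the top edge is absent there is simply no condition from it. I also need $0\leq\delta\leq|\Delta^\circ\cap\zb^2|$. For $\delta>0$ this follows from Lemma~\ref{lem:poly-defect}(2) as in the proof of Theorem~\ref{thm:log-poly-fix}: we have $M\geq\delta+1$ and $W_j\geq 2\delta$, hence $|\Delta^\circ\cap\zb^2|=\sum_{j=1}^{M-1}(W_j-1)\geq\delta(2\delta-1)\geq\delta$. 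For $\delta=0$ the bound is trivial.

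Next I match the numerical data. The class satisfies $c_1(X_\cf)\cdot\beta_\Delta=\sum_\rho d_\rho=|\partial\Delta\cap\zb^2|=\ell_0$. Hence the $n$ of Corollary~\ref{cor:poly-absolute-log-combined} is
\[
n=\ell_0+g-m-1=\ell_0+|\Delta^\circ\cap\zb^2|-\delta-1=n(\cf,\df).
\]
This agrees with the $n(\cf,\df)=n(\cf,\df;d_b)$ of Corollary~\ref{thm:log-simple-fix}, since $2M+d_t+d_b=\ell_0$. The genus $g(\df)$ and class $\beta(\df)$ agree by definition. Corollary~\ref{cor:poly-absolute-log-combined} then gives
\[
N^{\cf,m}_{g(\df),\beta(\df),n(\df)}=N^{X_\cf|\partial X_\cf,m}_{g(\df),\beta(\df),n(\df)}.
\]
The first bullet of Corollary~\ref{thm:log-simple-fix} identifies the right-hand side with $\mathcal Q^\cf_{\delta,m}(\df)$ under \eqref{eq:poly-range}. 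The degree and leading-part statements are inherited verbatim from that corollary.

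For $\delta=0$, condition \eqref{eq:poly-range} is not imposed. The edge-length hypothesis $d_\rho\geq0$ of Corollary~\ref{cor:poly-absolute-log-combined} is vacuous, and Corollary~\ref{thm:log-simple-fix} holds with only $d_b>0$, giving $\mathcal Q_{0,m}^\cf=[u^{2m}]s(u)^{\area(\Delta)}$. So only $d_b>0$ is needed, which is the final sentence of the statement.

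The main point to watch is that the combined ordinary invariant \eqref{eq:comb-gw} and the combined logarithmic invariant of Definition~\ref{def:comb-log-gw} range over the same summands. This means the same Minkowski decompositions, including segment summands, and the same constraints $n_i\geq1$. Corollary~\ref{cor:poly-absolute-log-combined} already asserts this. The only genuine content here is checking its hypothesis $d_\rho\geq\delta$ for the whole polygon, rather than for the summands, and that check is exactly what the first step does.
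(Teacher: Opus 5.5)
Your proposal is correct and follows the paper's proof. You verify $d_\rho\geq\delta$ for every edge, and you get $\delta\leq|\Delta^\circ\cap\zb^2|$ from Lemma~\ref{lem:poly-defect}(2) and $M\geq\delta+1$; the paper then combines Corollary~\ref{cor:poly-absolute-log-combined} with Corollary~\ref{thm:log-simple-fix}, exactly as you do, and your explicit check that $n$ agrees with $n(\cf,\df;d_b)$ simply spells out a step the paper leaves implicit.
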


\begin{proof}
When $\delta>0$, from equation (\ref{eq:poly-range}), the length of every edge of $\Delta(\cf,\df)$ is at least $\delta$.
Lemma \ref{lem:poly-defect}(2) gives $W_j\geq2\delta$ and equation (\ref{eq:poly-range}) gives $M\geq\delta+1$, hence
$$
|\Delta^\circ\cap\zb^2|=\sum_{j=1}^{M-1}(W_j-1)
\geq\delta(2\delta-1)\geq\delta.
$$
For $\delta=0$, $|\Delta^\circ\cap\zb^2|\geq0$ and the boundary length condition is automatic.
Therefore, the statement follows from Corollary \ref{cor:poly-absolute-log-combined} and 
Corollary \ref{thm:log-simple-fix}.

\end{proof}

Let $\Delta=\Delta(\cf,\df)$ be an $h$-transverse lattice polygon with smooth normal fan and $d_b>0$, and let $X=X_\cf$.
Fix an integer $0\leq g'\leq |\Delta^\circ\cap\zb^2|$, let $\delta=|\Delta^\circ\cap\zb^2|-g'$, and assume $d_\rho\geq\delta$ for every toric boundary divisor $D_\rho$.
Note that
$c_1(X)\cdot\beta_\Delta=\sum_\rho d_\rho
  =2M+d_t+d_b$, $n=c_1(X)\cdot\beta_\Delta+g'-1$.
In the notation of \cite[equation (1)]{dh2026}, we consider
\begin{equation}\label{eq:ding-hu-series}
P^{X}_{g',\beta_\Delta}(u)
:=\bigl(2\sin(u/2)\bigr)^{2-2g'-c_1(X)\cdot\beta_\Delta}
\sum_{g\geq g'}N^\cf_{g,\beta_\Delta,n}(g-g')
u^{2g-2+c_1(X)\cdot\beta_\Delta}.
\end{equation}
The invariants in this series are the connected ordinary invariants of Section \ref{sec:corr-gw}.

\begin{theorem}\label{thm:ding-hu-bps}
Let $\Delta=\Delta(\cf,\df)$ be an $h$-transverse lattice polygon with smooth normal fan and $d_b>0$.
Fix an integer $0\leq\delta< |\Delta^\circ\cap\zb^2|$ and suppose that $d_\rho\geq\delta$ for every toric boundary divisor $D_\rho$.
Let $g'=|\Delta^\circ\cap\zb^2|-\delta$ and $n=c_1(X_\cf)\cdot\beta_\Delta+g'-1$.
Then the series $P^{X_\cf}_{g',\beta_\Delta}(u)$ in $u$ with rational coefficients is a Laurent polynomial in $q$ with integer coefficients under the change of variables $q=e^{iu}$.
\end{theorem}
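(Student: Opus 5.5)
The plan is to transfer the connected ordinary series to the connected logarithmic series, and then apply Bousseau's connected correspondence (Theorem~\ref{thm:log-trop}) with $(\eta,\mu)=(\emptyset,1^{d_b})$.

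First, for each $g\geq g'$ put $m=g-g'$. The hypotheses $d_\rho\geq\delta$ and $0\leq\delta<|\Delta^\circ\cap\zb^2|$ are exactly those of Proposition~\ref{prop:poly-absolute-log} with this $\delta$ and $m$. Its $n=c_1(X_\cf)\cdot\beta_\Delta+g-m-1=c_1(X_\cf)\cdot\beta_\Delta+g'-1$ agrees with the $n$ of the statement. The proposition then gives
$$N^\cf_{g,\beta_\Delta,n}(g-g')=N^{X_\cf|\partial X_\cf}_{g,\beta_\Delta,n}(g-g',\emptyset,1^{d_b})$$
for all $g\geq g'$ simultaneously.

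Next I check that the exponents match. For $(\eta,\mu)=(\emptyset,1^{d_b})$ we have $\ell=|\partial\Delta\cap\zb^2|-d_b+d_b=c_1(X_\cf)\cdot\beta_\Delta$. The $n$ in Theorem~\ref{thm:log-trop} is $c_1(X_\cf)\cdot\beta_\Delta-d_b+d_b+g'-1$, which is the same as above. Substituting into \eqref{eq:ding-hu-series} and using Theorem~\ref{thm:log-trop}, the fixed-contact factor is empty and $(-i)(q^{1/2}-q^{-1/2})=2\sin(u/2)$. This gives
$$P^{X_\cf}_{g',\beta_\Delta}(u)=\bigl(2\sin(u/2)\bigr)^{2-2g'-\ell}\,N^{\Delta,g'}_{c,\mathrm{trop}}(\emptyset,1^{d_b};q)\,\bigl(2\sin(u/2)\bigr)^{2g'-2+\ell}=N^{\Delta,g'}_{c,\mathrm{trop}}(\emptyset,1^{d_b};q).$$
This also proves the agreement with the connected refined count asserted in the introduction.

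Finally, I must show that $N^{\Delta,g'}_{c,\mathrm{trop}}(\emptyset,1^{d_b};q)$ lies in $\zb[q^{\pm1/2}]$, and in fact in $\zb[q^{\pm1}]$ up to the paper's convention. By Theorem~\ref{thm:floor-trop} it equals $N^{\Delta,g'}_{c,\floor}(\emptyset,1^{d_b};q)=\sum_{\dl}\mult_q(\dl)\nu_{\emptyset,1^{d_b}}(\dl)$. Here $\nu$ is a nonnegative integer and $\mult_q(\dl)=\prod_e[w(e)]_q^2$, with the free-contact factors $[1]_q=1$. Each $[w]_q^2$ is a Laurent polynomial in $q$ with integer coefficients: $[w]_q$ involves only $q^{k/2}$ with $k\equiv w-1 \pmod 2$, so its square involves only integer powers of $q$. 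Hence the sum is in $\zb[q^{\pm1}]$. Equivalently, one can cite the tropical definition and the Block--Göttsche integrality of refined counts. The sum is finite, so we obtain an honest Laurent polynomial.

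The main obstacle is the first step. Proposition~\ref{prop:poly-absolute-log} must be applied to the connected invariants for every $g\geq g'$ at once, with the same point count $n$, and the hypothesis $d_\rho\geq\delta$ must suffice uniformly. Since $\delta=|\Delta^\circ\cap\zb^2|-g'$ is independent of $m$, this is fine.
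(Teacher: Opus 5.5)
Your proposal is correct and follows the paper's proof step for step. You apply Proposition~\ref{prop:poly-absolute-log} for each $g\geq g'$ with $m=g-g'$, then Theorem~\ref{thm:log-trop} with $(\eta,\mu)=(\emptyset,1^{d_b})$ to identify $P^{X_\cf}_{g',\beta_\Delta}(u)$ with $N^{\Delta,g'}_{c,\mathrm{trop}}(\emptyset,1^{d_b};q)$, and finally Theorem~\ref{thm:floor-trop} to write it as a finite sum of products of $[w]_q^2$; your explicit check that $[w]_q^2$ involves only integral powers of $q$ is a detail the paper leaves implicit.
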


\begin{proof}
For every $g\geq g'$, set $m=g-g'$.
Then $0\leq m\leq g$, $g=|\Delta^\circ\cap\zb^2|-\delta+m$, and $n=c_1(X_\cf)\cdot\beta_\Delta+g-m-1$.
From Proposition \ref{prop:poly-absolute-log}, we get
$$
N^\cf_{g,\beta_\Delta,n}(g-g')
=N^{X_\cf|\partial X_\cf}_{g,\beta_\Delta,n}
(g-g',\emptyset,1^{d_b}).
$$
Since $(-i)(q^{1/2}-q^{-1/2})=2\sin(u/2)$, it follows from Theorem \ref{thm:log-trop} that
$$
\sum_{g\geq g'}N^\cf_{g,\beta_\Delta,n}(g-g')u^{2g-2+c_1(X)\cdot\beta_\Delta}
=N_{c,\mathrm{trop}}^{\Delta,g'}(\emptyset,1^{d_b};q)
\bigl(2\sin(u/2)\bigr)^{2g'-2+c_1(X)\cdot\beta_\Delta}.
$$
Therefore, $P^X_{g',\beta_\Delta}(u)=N_{c,\mathrm{trop}}^{\Delta,g'}(\emptyset,1^{d_b};q)$, under the change of variables $q=e^{iu}$.
From Theorem \ref{thm:floor-trop}, we obtain that
$$
\begin{aligned}
N_{c,\mathrm{trop}}^{\Delta,g'}(\emptyset,1^{d_b};q)
&=N_{c,\floor}^{\Delta,g'}(\emptyset,1^{d_b};q)\\
&=\sum_{\dl\in\fdf_c(\Delta,g')}
\nu_{\emptyset,1^{d_b}}(\dl)
\prod_{e\in E(\dl)}[w(e)]_q^2.
\end{aligned}
$$
Hence, $P^X_{g',\beta_\Delta}(u)$ is a Laurent polynomial in $q$ with integer coefficients 
under the change of variables $q=e^{iu}$.
\end{proof}

\begin{corollary}\label{cor:ordinary-connected-asymptotics}
Fix $\cf$ with smooth normal fan and integers $\delta,m\geq0$. Let $\df$ be sufficiently large and spread out with $\cf$.
Suppose that, for every decomposition
$\Delta=\Delta_1+\cdots+\Delta_t$, $t\geq2$, one has
$\delta<\sum_{i<j}\ml(\Delta_i,\Delta_j)$. 
Then
$$
N^\cf_{g(\df),\beta(\df),n(\df)}(m)
=N^{\cf,m}_{g(\df),\beta(\df),n(\df)}
=\ql_{\delta,m}^\cf(\df).
$$
In particular, fix a polygon $\Delta=\Delta(\cf,\df)$ such that $k\Delta=\Delta(\cf,k\df)$ satisfies
the condition \eqref{eq:poly-range} for all sufficiently large positive integers $k$. 
If $\delta+m>0$, the connected invariants satisfy
$$
N^\cf_{g(k\df),\beta(k\df),n(k\df)}(m)
={}\frac{(-1)^m3^\delta}{24^m m!\delta!}
\area(\Delta)^{\delta+m}k^{2(\delta+m)}
\quad+O\bigl(k^{2(\delta+m)-1}\bigr).
$$
In particular, for $m=0$ and fixed $\delta>0$, the leading term is $3^\delta\area(\Delta)^\delta
k^{2\delta}/\delta!$.
\end{corollary}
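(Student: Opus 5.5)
The plan is to reduce the connected invariant to the combined invariant and then apply Theorem~\ref{thm:ordinary-poly-fixed}. Start from the definition \eqref{eq:comb-gw} of $N^{\cf,m}_{g,\beta_\Delta,n}$ as a sum over partitions $\Lambda=\bigsqcup\Lambda_i$ and triples $(\Delta_i,g_i,m_i)$. The $t=1$ term is exactly the connected invariant $N^\cf_{g,\beta_\Delta,n}(m)$: its only triple is $(\Delta,g,m)$, and its dimension constraint is the defining one. So the first equality amounts to showing that every summand with $t\geq2$ vanishes.

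For a summand with $t\geq2$, use the constraints \eqref{eq:comb-gw-cond}: $g=\sum g_i+1-t$, $m=\sum m_i$, and $m_i\leq g_i$. Set $g_i'=g_i-m_i\geq0$ and $\delta_i=|\Delta_i^\circ\cap\zb^2|-g_i'$. I will argue that a nonzero factor $N^\cf_{g_i,\beta_i,n_i}(m_i)$ forces $\delta_i\geq0$. Indeed, by the comparison argument in Corollary~\ref{cor:poly-absolute-log-combined}, each factor agrees with a logarithmic invariant, and that logarithmic invariant vanishes unless a tropical curve of genus $g_i'\leq|\Delta_i^\circ\cap\zb^2|$ exists. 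This is the Hodge-vanishing bound of Lemma~\ref{lem:poly-incidence-hodge} combined with the tropical genus bound. Since the pushforwards of the component cycles sum to a class through all $n$ points, I would argue inside the proof of Corollary~\ref{cor:poly-absolute-log-combined}, where the vanishing is already organized component by component.

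Next apply Bernstein's identity, in the form used for disconnected floor diagrams:
\[
|\Delta^\circ\cap\zb^2|-\Bigl(\sum_i g_i'+1-t\Bigr)=\sum_i\delta_i+\sum_{i<j}\ml(\Delta_i,\Delta_j).
\]
The left side equals $|\Delta^\circ\cap\zb^2|-(g-m)=\delta$. Since every $\delta_i\geq0$, this gives $\delta\geq\sum_{i<j}\ml(\Delta_i,\Delta_j)$, which contradicts the hypothesis. Hence only $t=1$ survives and $N^\cf(m)=N^{\cf,m}$. The second equality is then Theorem~\ref{thm:ordinary-poly-fixed}, which applies because $\df$ is sufficiently large and spread out.

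The main obstacle is the justification that each factor with $t\geq2$ vanishes when $\delta_i<0$. Factor classes can be segments or polygons whose edges do not satisfy the length hypothesis, so Proposition~\ref{prop:poly-absolute-log} cannot be applied to them individually. I will first try to apply Corollary~\ref{cor:poly-absolute-log-combined} to the whole summand, which converts it into the combined logarithmic sum. Then Theorem~\ref{thm:log-trop-total}, together with the matching of admissible components described before it, shows that components with no tropical curve have zero series. A segment summand $\Delta_i$ has no interior points and $\delta_i=-g_i'$, which is consistent with this.

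For the asymptotics, substitute $k\df$ into $\ql^\cf_{\delta,m}$. Its degree is $2(\delta+m)$, with top homogeneous part $\frac{(-1)^m3^\delta}{24^mm!\delta!}\area(\Delta)^{\delta+m}$ by Corollary~\ref{thm:log-simple-fix}. Homogeneity gives $\area(k\Delta)=k^2\area(\Delta)$, and the lower-degree parts contribute $O(k^{2(\delta+m)-1})$. The decomposition hypothesis must hold for $k\Delta$; for the statement I take it as assumed along the dilation. Setting $m=0$ gives the final formula.
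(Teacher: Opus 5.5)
Your argument for the first identity is in substance the paper's. The paper notes that a reducible tropical curve of degree $\Delta$ and genus $g'$ has components with $\delta_i\geq0$ and $\delta=\sum_i\delta_i+\sum_{i<j}\ml(\Delta_i,\Delta_j)$. The hypothesis therefore excludes reducible curves, and the combined and connected invariants agree; your summand-by-summand vanishing is the same exclusion. Two small points:
\begin{itemize}
\item The summand-wise ordinary/logarithmic comparison comes from the proof of Corollary~\ref{cor:poly-absolute-log-combined}, applied to the whole product. It does not hold for individual factors, as you eventually acknowledge.
\item Your Bernstein identity with $|\Delta_i^\circ\cap\zb^2|$ is only correct for two-dimensional or primitive-segment summands. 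This suffices, because a non-primitive segment factor has no irreducible tropical curve and so vanishes.
\end{itemize}

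The genuine gap is in the asymptotic part. There the statement only assumes that $k\Delta$ satisfies \eqref{eq:poly-range} for large $k$; the strict mixed-area condition for $k\Delta$ is not a hypothesis. Without it, $\ql^\cf_{\delta,m}(k\df)$ computes only the combined invariant $N^{\cf,m}$, not the connected one. By taking that condition ``as assumed along the dilation'' you prove a strictly weaker statement.

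The missing idea is that the condition is automatic for large $k$. Suppose $k\Delta=\Delta_1+\cdots+\Delta_t$ with $t\geq2$ and $\sum_{i<j}\ml(\Delta_i,\Delta_j)\leq\delta$. Let $\beta_1$ and $\beta'=\beta_2+\cdots+\beta_t$ be the nonzero nef classes on $X_\cf$ of $\Delta_1$ and $\Delta_2+\cdots+\Delta_t$. Since mixed areas are nonnegative, $\beta_1\cdot\beta'\leq\delta$.
\begin{itemize}
\item If $\beta_1^2=0$, then $\beta_1\cdot\beta'=\beta_1\cdot(\beta_1+\beta')=k\,\beta_1\cdot\beta_\Delta\geq k$ because $\beta_\Delta$ is ample, so $k\leq\delta$. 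The case $\beta'^2=0$ is symmetric.
\item If both squares are positive, the Hodge index theorem (equivalently, Minkowski's inequality $\ml(P,Q)^2\geq\area(P)\area(Q)$) gives $\beta_1^2\beta'^2\leq\delta^2$. Hence $k^2\area(\Delta)=\beta_1^2+2\beta_1\cdot\beta'+\beta'^2\leq2\delta^2+2\delta$.
\end{itemize}
Thus once $k>\delta$ and $k^2\area(\Delta)>2\delta^2+2\delta$, no such decomposition exists and the first part applies to $k\Delta$. Your homogeneity argument with $\area(k\Delta)=k^2\area(\Delta)$ then yields the expansion.
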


\begin{proof}
Let $g'=|\Delta^\circ\cap\zb^2|-\delta$. A reducible nodal tropical curve of degree $\Delta$ and
genus $g'$ has component degrees $\Delta_i$ and nonnegative component cogenera $\delta_i$, with
$$
\delta=\sum_i\delta_i+\sum_{i<j}\ml(\Delta_i,\Delta_j).
$$
Then the hypothesis $\delta<\sum_{i<j}\ml(\Delta_i,\Delta_j)$ implies that {there are no reducible tropical curves},
and $N^\cf_{g(\df),\beta(\df),n(\df)}(m)=N^{\cf,m}_{g(\df),\beta(\df),n(\df)}$.
From Theorem \ref{thm:ordinary-poly-fixed}, $N^\cf_{g(\df),\beta(\df),n(\df)}(m)=\ql_{\delta,m}^\cf$.

We next verify that the strict mixed-intersection condition is automatic along a sufficiently large
dilation.  
Suppose, for a contradiction, that $k\Delta=\sum_{i=1}^t\Delta_i$ is a nontrivial Minkowski decomposition of $k\Delta$ with $\sum_{i<j}\ml(\Delta_i,\Delta_j)\leq\delta$. 
Let $\beta_1,\ldots,\beta_t$ be the nonzero nef integral divisor classes corresponding to
$\Delta_1,\ldots,\Delta_t$, respectively. 
Note that $\beta_1^2=\area(\Delta_1)$, $(\sum_{i=2}^t\beta_i)^2=\area(\sum_{i=2}^t\Delta_i)$ 
and $\beta_i\cdot\beta_j=\ml(\Delta_i,\Delta_j)$,  so we have
$$
\beta_1\cdot\sum_{i=2}^t\beta_i=\sum_{i=2}^t\ml(\Delta_1,\Delta_i)\leq\delta.
$$
If $\beta_1^2=0$, we have $\delta\geq\beta_1\cdot\sum_{i=2}^t\beta_i=k\beta_{\Delta}\cdot\beta_1\geq k$, since $\beta_\Delta$ is ample and $\beta_1$ is nonzero. 
The same argument applies when $(\sum_{i=2}^t\beta_i)^2=0$.
If $\beta_1^2>0$ and $(\sum_{i=2}^t\beta_i)^2>0$, 
the Hodge index theorem \cite[Chapter V, Section 1]{hartshorne1977} gives
$$
\beta_1^2(\sum_{i=2}^t\beta_i)^2\leq(\beta_1\cdot\sum_{i=2}^t\beta_i)^2\leq\delta^2.
$$
Consequently, 
$$
k^2\area(\Delta)=(\beta_1+\sum_{i=2}^t\beta_i)^2\leq2\delta^2+2\delta.
$$
Thus no such decomposition can exist once $k>\delta$ and $k^2\area(\Delta)>2\delta^2+2\delta$. This
proves the required eventual condition uniformly over all decompositions.
For sufficiently large $k$, Corollary \ref{thm:log-simple-fix} identifies the highest homogeneous part of this polynomial. 
Since $\area(k\Delta)=k^2\area(\Delta)$, we obtain the displayed expansion. 
\end{proof}

\section*{Acknowledgements}
Y. Ding would like to thank Jianxun Hu for valuable discussions and continuous support.
Y. Ding is partially supported by the Natural Science Foundation of Henan Province (No.262300421872).

\bibliographystyle{abbrv}
\bibliography{sg}

\end{document}